\documentclass{article}
\usepackage{amssymb,bm}
\usepackage{amsmath}
\usepackage{amsthm}
\usepackage{graphicx}
\usepackage[active]{srcltx} 
\usepackage{hyperref}
\hypersetup{pdfborder=0 0 0}

\setlength{\oddsidemargin}{0in}
\setlength{\evensidemargin}{-0.0625in}
\setlength{\textwidth}{6.5in}
\setlength{\topmargin}{-.5in}
\setlength{\textheight}{8.8in}


\newtheorem{theorem}{{\sc Theorem}}[section]

\newtheorem{lemma}[theorem]{{\sc Lemma}}
\newtheorem{corollary}[theorem]{Corollary}

\newtheorem{definition}[theorem]{Definition}

\newcommand{\bb}[1]{\mathbb{ #1}}


\newcommand{\Sym}{\mathrm{Sym}}

\newcommand{\Trc}{\mathrm{Tr}\,}

\newcommand{\tns}[1]{#1\otimes #1}
\newcommand{\hf}{\displaystyle\frac{1}{2}}
\newcommand{\nth}[1]{\displaystyle\frac{1}{#1}}

\newcommand{\dif}[2]{\displaystyle\frac{\partial #1}{\partial #2}}
\newcommand{\Grad}{\nabla}
\newcommand{\Div}{\nabla \cdot}

\renewcommand{\Hat}[1]{\widehat{#1}}
\newcommand{\Tld}[1]{\widetilde{#1}}

\newcommand{\av}[1]{\langle #1 \rangle}


\def\XXint#1#2#3{{\setbox0=\hbox{$#1{#2#3}{\int}$ }
\vcenter{\hbox{$#2#3$ }}\kern-.6\wd0}}

\newcommand\myatop[2]{\genfrac{}{}{0pt}{}{#1}{#2}}

\newcommand{\re}{\Re\mathfrak{e}}

\newcommand{\lims}{\mathop{\overline\lim}}
\newcommand{\limi}{\mathop{\underline\lim}}

\newcommand{\bc}{boundary condition}

\newcommand{\lhs}{left-hand side}

\newcommand{\WLOG}{without loss of generality}
\newcommand{\nbh}{neighborhood}


\newcommand{\Ga}{\alpha}

\newcommand{\Gd}{\delta}

\newcommand{\Gf}{\phi}

\newcommand{\Gk}{\kappa}

\newcommand{\Gl}{\lambda}

\newcommand{\Gth}{\theta}

\newcommand{\GL}{\Lambda}

\newcommand{\GO}{\Omega}

\bmdefine\BGa{\alpha}
\bmdefine\BGb{\beta}
\bmdefine\BGd{\delta}
\bmdefine\BGe{\epsilon}
\bmdefine\BGve{\varepsilon}
\bmdefine\BGf{\phi}
\bmdefine\BGvf{\varphi}
\bmdefine\BGg{\gamma}
\bmdefine\BGc{\chi}
\bmdefine\BGi{\iota}
\bmdefine\BGk{\kappa}
\bmdefine\BGl{\lambda}
\bmdefine\BGn{\eta}
\bmdefine\BGm{\mu}
\bmdefine\BGv{\nu}
\bmdefine\BGp{\pi}
\bmdefine\BGth{\theta}
\bmdefine\BGvth{\vartheta}
\bmdefine\BGr{\rho}
\bmdefine\BGvr{\varrho}
\bmdefine\BGs{\sigma}
\bmdefine\BGvs{\varsigma}
\bmdefine\BGt{\tau}
\bmdefine\BGj{\tau}
\bmdefine\BGu{\upsilon}
\bmdefine\BGo{\omega}
\bmdefine\BGx{\xi}
\bmdefine\BGy{\psi}
\bmdefine\BGz{\zeta}
\bmdefine\BGD{\Delta}
\bmdefine\BGF{\Phi}
\bmdefine\BGG{\Gamma}
\bmdefine\BGL{\Lambda}
\bmdefine\BGP{\Pi}
\bmdefine\BGT{\Theta}
\bmdefine\BGS{\Sigma}
\bmdefine\BGU{\Upsilon}
\bmdefine\BGO{\Omega}
\bmdefine\BGX{\Xi}
\bmdefine\BGY{\Psi}

\newcommand{\CA}{{\mathcal A}}
\newcommand{\CB}{{\mathcal B}}
\newcommand{\CC}{{\mathcal C}}

\newcommand{\CE}{{\mathcal E}}
\newcommand{\CF}{{\mathcal F}}

\newcommand{\CL}{{\mathcal L}}
\newcommand{\CM}{{\mathcal M}}

\bmdefine\BCA{{\mathcal A}}
\bmdefine\BCB{{\mathcal B}}
\bmdefine\BCC{{\mathcal C}}
\bmdefine\BCD{{\mathcal D}}
\bmdefine\BCE{{\mathcal E}}
\bmdefine\BCF{{\mathcal F}}
\bmdefine\BCG{{\mathcal G}}
\bmdefine\BCH{{\mathcal H}}
\bmdefine\BCI{{\mathcal I}}
\bmdefine\BCJ{{\mathcal J}}
\bmdefine\BCK{{\mathcal K}}
\bmdefine\BCL{{\mathcal L}}
\bmdefine\BCM{{\mathcal M}}
\bmdefine\BCN{{\mathcal N}}
\bmdefine\BCO{{\mathcal O}}
\bmdefine\BCP{{\mathcal P}}
\bmdefine\BCQ{{\mathcal Q}}
\bmdefine\BCR{{\mathcal R}}
\bmdefine\BCS{{\mathcal S}}
\bmdefine\BCT{{\mathcal T}}
\bmdefine\BCU{{\mathcal U}}
\bmdefine\BCV{{\mathcal V}}
\bmdefine\BCW{{\mathcal W}}
\bmdefine\BCX{{\mathcal X}}
\bmdefine\BCY{{\mathcal Y}}
\bmdefine\BCZ{{\mathcal Z}}

\bmdefine\Bzr{ 0}
\bmdefine\Ba{ a}
\bmdefine\Bb{ b}
\bmdefine\Bc{ c}
\bmdefine\Bd{ d}
\bmdefine\Be{ e}
\bmdefine\Bf{ f}
\bmdefine\Bg{ g}
\bmdefine\Bh{ h}
\bmdefine\Bi{ i}
\bmdefine\Bj{ j}
\bmdefine\Bk{ k}
\bmdefine\Bl{ l}
\bmdefine\Bm{ m}
\bmdefine\Bn{ n}
\bmdefine\Bo{ o}
\bmdefine\Bp{ p}
\bmdefine\Bq{ q}
\bmdefine\Br{ r}
\bmdefine\Bs{ s}
\bmdefine\Bt{ t}
\bmdefine\Bu{ u}
\bmdefine\Bv{ v}
\bmdefine\Bw{ w}
\bmdefine\Bx{ x}
\bmdefine\By{ y}
\bmdefine\Bz{ z}
\bmdefine\BA{ A}
\bmdefine\BB{ B}
\bmdefine\BC{ C}
\bmdefine\BD{ D}
\bmdefine\BE{ E}
\bmdefine\BF{ F}
\bmdefine\BG{ G}
\bmdefine\BH{ H}
\bmdefine\BI{ I}
\bmdefine\BJ{ J}
\bmdefine\BK{ K}
\bmdefine\BL{ L}
\bmdefine\BM{ M}
\bmdefine\BN{ N}
\bmdefine\BO{ O}
\bmdefine\BP{ P}
\bmdefine\BQ{ Q}
\bmdefine\BR{ R}
\bmdefine\BS{ S}
\bmdefine\BT{ T}
\bmdefine\BU{ U}
\bmdefine\BV{ V}
\bmdefine\BW{ W}
\bmdefine\BX{ X}
\bmdefine\BY{ Y}
\bmdefine\BZ{ Z}


\newcommand{\SFL}{\mathsf{L}}


\title{
Rigorous derivation of the formula for the buckling load in axially compressed circular cylindrical shells
}
\author{Yury Grabovsky\footnote{Temple University, yury@temple.edu, 215-204-1650} 
\and Davit Harutyunyan\footnote{University of Utah}}
\begin{document}
\maketitle
\begin{abstract}
  The goal of this paper is to apply the recently developed theory of buckling
  of arbitrary slender bodies to a tractable yet non-trivial example of
  buckling in axially compressed circular cylindrical shells, regarded as
  three-dimensional hyperelastic bodies. The theory is based on a
  mathematically rigorous asymptotic analysis of the second variation of 3D,
  fully nonlinear elastic energy, as the shell's slenderness parameter goes to
  zero. Our main results are a rigorous proof of the classical formula for
  buckling load and the explicit expressions for the relative amplitudes of
  displacement components in single Fourier harmonics buckling modes, whose
  wave numbers are described by Koiter's circle. This work is also a part of
  an effort to quantify the sensitivity of the buckling load of axially
  compressed cylindrical shells to imperfections of load and shape.
\end{abstract}

\section{Introduction}
The buckling of rods, shells and plates is traditionally described in
mechanics textbooks as an instability in the framework of nonlinear shell
theory obtained by semi-rigorous dimension reduction of three-dimensional
nonlinear elasticity. While these theories are effective in describing large
deformations of rods and shells (including buckling), their heuristic nature
obscures the source of the discrepancy between theoretical and experimental
results, as is the case for axially compressed circular cylindrical shells
\cite{zmc02}. At the same time, a rigorously derived theory of bending of
shells \cite{fjmm03} captures deformations in the vicinity of relatively
smooth isometries of the middle surface. Unfortunately, the isometries of the
straight circular cylinder are non-smooth \cite{yosh55}. Our approach,
originating in \cite{grtr07}, is capable of giving a mathematically rigorous
treatment of buckling of slender bodies and determining whether the tacit
assumptions of the classical derivation are the source of the discrepancy with
experiment. In this paper, we apply our theory and obtain a mathematically
rigorous proof of the classical formula for buckling load
\cite{lorenz11,timosh14}. This result justifies the generally accepted
assumption that the paradoxical behavior of cylindrical shells in buckling is
due to the high sensitivity of the buckling load to imperfections
\cite{almr63,tenn64,wms65}. This phenomenon is commonly explained by the
instability of equilibrium states in the vicinity of the buckling point on the
bifurcation diagram \cite{koit45,wms65,buha66}. However, the exact mechanisms
of imperfection sensitivity are not fully understood, nor is there a reliable
theory capable of predicting experimentally observed buckling loads
\cite{lcp00,zmc02,hlp06}. While a full bifurcation analysis is necessary to
understand the stability of equilibria near the critical point, our method's
singular focus on the stability of the trivial branch gives access to the
scaling behavior of key measures of structural stability in the thin shell
limit. We have argued in \cite{grha} that axially compressed circular
cylindrical shells are susceptible to scaling instability of the critical
load, whereby the scaling exponent, and not just its coefficient, can
be affected by imperfections. The new analytical tools developed in
\cite{grha14} give hope for a path towards quantification of imperfection
sensitivity.

Our approach is based on the observation that the pre-buckled state is
governed by equations of linear elasticity \cite{grtr07}. At the
critical load, the linear elastic stress reaches a level at which the trivial
branch becomes unstable within the framework of 3D hyperelasticity. The origin
of this instability is completely geometric: the frame-indifference of the
energy density function implies\footnote{The assumption that the trivial
  deformation is stress-free is also essential.} non-convexity in the
compressive strain region. Since buckling occurs at relatively small
compressive loads, the material's stress-strain response is locally
linear. This explains why all classical formulas for buckling loads of
various slender structures involve only linear elastic moduli and hold
regardless of the material response model.

The significance of our approach is two-fold. First, it provides a common
platform to study buckling of arbitrary slender bodies. Second, its
conclusions are mathematically rigorous and its underlying assumptions
explicitly specified. The goal of this paper is to demonstrate the power and
flexibility of our method on the non-trivial, yet analytically solvable
example of the axially compressed circular cylindrical shell. Our analysis is
powered by asymptotically sharp Korn-like inequalities \cite{horg95,naza08},
where instead of bounding the $L^{2}$ norm of the displacement gradient by the
$L^{2}$ norm of the strain tensor, we bound the $L^{2}$ norm of individual
components of the gradient by the $L^{2}$ norm of the strain tensor. These
inequalities have been derived in our companion paper \cite{grha14}. The
method of buckling equivalence \cite{grtr07} provides flexibility by
furnishing a systematic way of discarding asymptotically insignificant terms,
while simplifying the variational functionals that characterize buckling.

The paper is organized as follows. In Section~\ref{sec:genth}, we describe the loading and
 corresponding trivial branch of an axially compressed cylindrical shell
treated as 3-dimensional hyperelastic body. We define stability of the
trivial branch in terms of the second variation of energy. Next, we describe
our approach from \cite{grtr07} and recall all necessary technical results from
\cite{grha14,grha} for the sake of completeness. In Section~\ref{sec:perfect}, we give
the rigorous derivation of the classical buckling load and identify the
explicit form of buckling modes. Our two most delicate results are a rigorous
proof of the existence of a buckling mode that is a single Fourier harmonic and
the linearization of the dependence of this buckling mode on the radial
variable---the two assumptions that are commonly made in the classical derivation
of the critical load formula.

\section{Axially compressed cylindrical shell}
\setcounter{equation}{0}
\label{sec:genth}
In this section we will give a mathematical formulation of the problem of buckling of axially
compressed cylindrical shell.

\subsection{Boundary conditions and trivial branch }
\label{sub:trbr}
Consider the circular cylindrical shell given in cylindrical coordinates
$(r,\Gth,z)$ as follows:
\[
\CC_{h}=I_{h}\times\bb{T}\times[0,L],\qquad I_{h}=[1-h/2,1+h/2],
\]
where $\bb{T}$ is a 1-dimensional torus (circle) describing $2\pi$-periodicity in
$\Gth$. Here $h$ is the slenderness parameter, equal to the ratio of the shell
thickness to the radius. In this paper we consider the axial compression of the shell where
the Lipschitz deformation $\By:\CC_{h}\to \mathbb R^3$ satisfies the boundary
conditions, given in cylindrical coordinates by
\begin{equation}
  \label{bc}
  y_{\Gth}(r,\Gth,0)=y_{z}(r,\Gth,0)=y_{\Gth}(r,\Gth,L)=0,\qquad
  y_{z}(r,\Gth,L)=(1-\Gl)L.
\end{equation}
The loading is parametrized by the compressive strain $\Gl>0$ in
the axial direction. The trivial deformation $\By(\Bx)=\Bx$ satisfies the \bc
s for $\Gl=0$. By a \emph{stable deformation} we mean a Lipschitz function
$\By(\Bx;h,\Gl)$, satisfying \bc s (\ref{bc}) and being a weak local
minimizer\footnote{A deformation $\By$ is called a weak local
  minimizer, if it delivers the smallest value of the energy $\CE(\By)$ among
  all Lipschitz function satisfying \bc s (\ref{bc}) that are sufficiently close to
  $\By$ in the $W^{1,\infty}$ norm.} of the energy functional
\[
\CE(\By)=\int_{\CC_{h}}W(\Grad\By)d\Bx
\]
among all Lipschitz functions satisfying (\ref{bc}). The energy density
function $W(\BF)$ is assumed to be three times continuously differentiable in a
\nbh\ of $\BF=\BI$. The key (and universal)
properties of $W(\BF)$ are
\begin{itemize}
\item[(P1)] Absence of prestress: $W_{\BF}(\BI)=\Bzr$;
\item[(P2)] Frame indifference: $W(\BF\BR)=W(\BF)$ for every $\BR\in SO(3)$;
\item[(P3)] Local stability of the trivial deformation $\By(\Bx)=\Bx$:
  \begin{equation}
    \label{Lcoerc}
    \av{\SFL_{0}\BGx,\BGx}> \Ga_{\SFL_{0}}|\BGx|^{2},\quad\BGx\in\Sym(\bb{R}^{3}),
  \end{equation}
  where $\Sym(\bb{R}^{3})$ is the space of symmetric $3\times 3$ matrices, and
  $\SFL_{0}=W_{\BF\BF}(\BI)$ is the linearly elastic tensor of material
  properties. Here, and elsewhere we use the notation
$\av{\BA,\BB}=\Trc(\BA\BB^{T})$ for the Frobenius inner product on the space of
$3\times 3$ matrices.
\end{itemize}
While this is not needed for general theory,
in this paper we will also assume that $W(\BF)$ is isotropic:
\begin{equation}
  \label{Wiso}
  W(\BF\BR)=W(\BF)\text{ for every }\BR\in SO(3).
\end{equation}
This assumption is necessary to obtain an explicit formula for the critical load.

Our goal is to examine stability of the homogeneous trivial branch
$\By(\Bx;h,\Gl)$ given in cylindrical coordinates by
\begin{equation}
  \label{trbr}
y_{r}=(a(\Gl)+1)r,\qquad y_{\Gth}=0,\qquad y_{z}=(1-\Gl)z,
\end{equation}
where the function $a(\Gl)$ is determined by the natural \bc s
\begin{equation}
  \label{nbc}
  \begin{cases}
  \BP(\Grad\By)\Be_{r}=\Bzr,&r=1\pm\frac{h}{2},\\
  \BP(\Grad\By)\Be_{z}\cdot\Be_{r}=0,&z=0,L,
  \end{cases}
\end{equation}
since uniform deformations always satisfy equations of equilibrium. Here
$\BP(\BF)=W_{\BF}(\BF)$, the gradient of $W$ with respect to $\BF$, is the
Piola-Kirchhoff stress tensor. We observe that
\begin{lemma}
  \label{lem:trbr}
Assume that $W(\BF)$ is three times continuously differentiable in a
\nbh\ of $\BF=\BI$, satisfies properties (P1)--(P3) and is isotropic
(i.e. satisfies (\ref{Wiso})). Then there exists
a unique function $a(\Gl)$, of class  $C^{2}$ on a \nbh\ of 0, such that
$a(0)=0$ and the natural \bc s (\ref{nbc}) are satisfied
\end{lemma}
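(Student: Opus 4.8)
The plan is to view the natural boundary conditions (\ref{nbc}) as a nonlinear equation $G(a,\Gl)=\Bzr$ for the scalar unknown $a$ and apply the implicit function theorem at $(a,\Gl)=(0,0)$. First I would compute the deformation gradient of the trivial branch (\ref{trbr}) in cylindrical coordinates: with respect to the orthonormal frame $(\Be_{r},\Be_{\Gth},\Be_{z})$ it is the diagonal matrix $\BF(a,\Gl)=\mathrm{diag}(1+a,\,1+a,\,1-\Gl)$, since $y_{r}=(a+1)r$ stretches both the radial and the hoop directions by $1+a$ and $y_{z}=(1-\Gl)z$ stretches the axial direction by $1-\Gl$. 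Because $\BF$ is constant (independent of $\Bx$), the Piola–Kirchhoff stress $\BP(\BF)=W_{\BF}(\BF)$ is also constant, so the equilibrium equations $\dvg\BP=\Bzr$ hold automatically and only the boundary conditions (\ref{nbc}) need to be imposed. Isotropy together with frame-indifference implies that $\BP(\BF)$ is diagonal in the same frame whenever $\BF$ is; hence $\BP(\BF)\Be_{z}\cdot\Be_{r}=0$ holds identically, and the only genuine condition is the scalar equation
\begin{equation}
  \label{Geq}
  g(a,\Gl)\defeq \Be_{r}\cdot\BP(\mathrm{diag}(1+a,1+a,1-\Gl))\Be_{r}=0.
\end{equation}

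Next I would check the hypotheses of the implicit function theorem for (\ref{Geq}) at $(0,0)$. Property (P1), absence of prestress, gives $\BP(\BI)=W_{\BF}(\BI)=\Bzr$, so $g(0,0)=0$. Since $W$ is three times continuously differentiable near $\BF=\BI$, the map $\BP$ is $C^{2}$ there, and the composition with the smooth curve $(a,\Gl)\mapsto\mathrm{diag}(1+a,1+a,1-\Gl)$ shows $g\in C^{2}$ near $(0,0)$. It remains to verify $\partial g/\partial a\,(0,0)\neq 0$. Differentiating (\ref{Geq}) in $a$ at the origin and using $\SFL_{0}=W_{\BF\BF}(\BI)$ gives
\[
\dif{g}{a}(0,0)=\av{\SFL_{0}(\Be_{r}\otimes\Be_{r}+\Be_{\Gth}\otimes\Be_{\Gth}),\,\Be_{r}\otimes\Be_{r}},
\]
the derivative of $\BF$ along the $a$-direction being $\Be_{r}\otimes\Be_{r}+\Be_{\Gth}\otimes\Be_{\Gth}$. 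For an isotropic tensor this equals $\Gl_{\mathrm L}\cdot 2+2\Gm_{\mathrm L}$ in terms of the Lam\'e moduli, i.e. it is $\Be_{r}\otimes\Be_{r}$ paired through $\SFL_0$ with $2\Be_r\otimes\Be_r+2\Be_\Gth\otimes\Be_\Gth-(\Be_r\otimes\Be_r)$; more robustly, taking $\BGx=\Be_{r}\otimes\Be_{r}+\Be_{\Gth}\otimes\Be_{\Gth}$ in (\ref{Lcoerc}) and using that $\SFL_0$ maps symmetric matrices that are multiples of rotationally symmetric projections to the same subspace, one sees the quantity is bounded below by a positive multiple of $\Ga_{\SFL_{0}}$; in any case positivity of $\SFL_{0}$ on $\Sym(\bb R^{3})$ forces $\partial g/\partial a\,(0,0)>0$.

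With these three facts in hand, the implicit function theorem yields a unique $C^{2}$ function $a(\Gl)$ defined on a neighborhood of $\Gl=0$ with $a(0)=0$ and $g(a(\Gl),\Gl)=0$, which is exactly the assertion of the lemma; the remaining boundary condition in (\ref{nbc}) at $z=0,L$ is satisfied identically as noted above. The main obstacle I anticipate is not the IFT machinery but the bookkeeping needed to justify that the only nontrivial scalar equation is (\ref{Geq})—namely, showing carefully that isotropy plus frame-indifference makes $\BP(\BF)$ diagonal in the cylindrical frame for diagonal $\BF$ (so that the shear condition $\BP\Be_{z}\cdot\Be_{r}=0$ is automatic), and verifying the sign $\partial g/\partial a\,(0,0)>0$ cleanly from (P3) rather than by invoking explicit Lam\'e moduli. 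A convenient way around the first point is to use the isotropic representation $W(\BF)=\widehat W(v_{1},v_{2},v_{3})$ in terms of the singular values (principal stretches) of $\BF$, which makes the diagonal structure of $\BP$ transparent and reduces (\ref{Geq}) to $\partial\widehat W/\partial v_{1}\,(1+a,1+a,1-\Gl)=0$, a scalar equation to which the one-variable implicit function theorem applies directly.
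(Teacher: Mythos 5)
Your proposal is correct and follows essentially the same route as the paper: reduce (\ref{nbc}) to the single scalar equation $P_{rr}(a,\Gl)=0$ using that isotropy plus frame-indifference makes $\BP$ diagonal on diagonal $\BF$, then apply the implicit function theorem, with non-degeneracy $\av{\SFL_{0}(\Be_{r}\otimes\Be_{r}+\Be_{\Gth}\otimes\Be_{\Gth}),\Be_{r}\otimes\Be_{r}}=2(\Gk+\Gm/3)>0$ following from (P3). One small caution: your closing remark that ``in any case positivity of $\SFL_{0}$ on $\Sym(\bb{R}^{3})$ forces $\partial g/\partial a(0,0)>0$'' is not quite right as stated --- positive-definiteness alone does not control the off-diagonal pairing $\av{\SFL_{0}\BGx,\Be_{r}\otimes\Be_{r}}$; you need isotropy (e.g.\ the $r\leftrightarrow\Gth$ symmetry giving $\av{\SFL_{0}\BGx,\Be_{r}\otimes\Be_{r}}=\tfrac12\av{\SFL_{0}\BGx,\BGx}$), which is exactly how the paper reduces the condition to $\Gk+\Gm/3\neq0$.
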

\begin{proof}
  By (P2) $W(\BF)=\hat{W}(\BF^{T}\BF)$. The function $\hat{W}(\BC)$ is three times
  continuously differentiable in a \nbh\ of $\BC=\BI$. Thus,
\[
\BP(\BF)=W_{\BF}(\BF)=2\BF\hat{W}_{\BC}(\BF^{T}\BF).
\]
The isotropy (\ref{Wiso}) implies that $\hat{W}(\BR\BC\BR^{T})=\hat{W}(\BC)$
for all $\BR\in SO(3)$. Differentiating this relation in $\BR$ at $\BR=\BI$ we
conclude that $\hat{W}_{\BC}(\BC)$ must commute with $\BC$. In particular,
this implies that the matrix $\hat{W}_{\BC}(\BC)$ must be diagonal, whenever $\BC$
is diagonal. We compute that in cylindrical coordinates
\[
\BF=\Grad\By=\left[
  \begin{array}{ccc}
    1+a & 0 &0\\
    0 & 1+a& 0\\
    0 & 0& 1-\Gl
  \end{array}
\right],\quad
\BC=\BF^{T}\BF=\left[
  \begin{array}{ccc}
    (1+a)^{2} & 0 &0\\
    0 & (1+a)^{2}& 0\\
    0 & 0& (1-\Gl)^{2}
  \end{array}
\right]
\]
Hence, $\BP(\BF)$ is diagonal, and conditions (\ref{nbc}) reduce to a
single scalar equation
\begin{equation}
  \label{rnbc}
  \hat{W}_{\BC}((1+a)^{2}(\tns{\Be_{r}}+\tns{\Be_{\Gth}})+(1-\Gl)^{2}\tns{\Be_{z}})\Be_{r}\cdot\Be_{r}=0,
\end{equation}
where the \lhs\ of (\ref{rnbc}) is a twice continuously differentiable
function of $(\Gl,a)$. Condition (P1) implies that $(\Gl,a)=(0,0)$ is a
solution. The conclusion of the lemma is guaranteed by the implicit function
theorem, whose non-degeneracy condition reduces to
\begin{equation}
  \label{ndgn}
\hf\SFL_{0}(\tns{\Be_{r}}+\tns{\Be_{\Gth}})\Be_{r}\cdot\Be_{r}\not=0.  
\end{equation}
By assumption, $\SFL_{0}$ is isotropic, and the non-degeneracy condition (\ref{ndgn})
becomes $\Gk+\mu/3\not=0$, which is satisfied due to (P3). Here $\Gk$ and
$\mu$ are the bulk and shear moduli, respectively.
\end{proof}
It is important, that as $h\to 0$, the trivial branch does not blow up. In
fact, in our case the trivial branch is independent of $h$.

The general theory of buckling \cite{grtr07} is designed to detect the first
instability of a trivial branch in a slender body $\GO_{h}$ that is
well-described by linear elasticity. Here is the formal definition from
\cite{grtr07,grha}.
\begin{definition}
  \label{def:trbr}
  We call the family of Lipschitz equilibria $\By(\Bx;h,\Gl)$ of $\CE(\By)$ a
  \textbf{linearly elastic trivial branch} if there exist $h_{0}>0$ and $\Gl_{0}>0$,
  so that for every $h\in[0,h_{0}]$ and $\Gl\in[0,\Gl_{0}]$
\begin{itemize}
\item[(i)] $\By(\Bx;h,0)=\Bx$
\item[(ii)] There exist a family of Lipschitz functions $\Bu^{h}(\Bx)$,
  independent of $\Gl$, such that
\begin{equation}
  \label{fundass}
  \|\Grad\By(\Bx;h,\Gl)-\BI-\Gl\Grad\Bu^{h}(\Bx)\|_{L^{\infty}(\GO_{h})}\le C\Gl^{2},
\end{equation}
\item[(iii)]
  \begin{equation}
    \label{reglbda}
    \|\dif{(\Grad\By)}{\Gl}(\Bx;h,\Gl)-\Grad\Bu^{h}(\Bx)\|_{L^{\infty}(\GO_{h})}\le C\Gl
  \end{equation}
\end{itemize}
where the constant $C$ is independent of $h$ and $\Gl$.
\end{definition}
We remark, that the leading order asymptotics $\Bu^{h}(\Bx)$ of the nonlinear
trivial branch is nothing else but a linear elastic displacement, that can be
found by solving the equations of linear elasticity
$\Div(\SFL_{0}e(\Bu^{h}))=\Bzr$, augmented by the appropriate \bc s. Here
$e(\Bu^{h})=\hf(\Grad\Bu^{h}+(\Grad\Bu^{h})^{T})$ is the linear elastic
strain. The linear elastic trivial branch $\Gl\Bu^{h}(\Bx)$ depends only on
the linear elastic moduli $\SFL_{0}$, unlike the model-dependent nonlinear
trivial branch $\By(\Bx;h,\Gl)$.

The fact that our trivial branch (\ref{trbr}) satisfies all conditions in
Definition~\ref{def:trbr} is easy to verify. Here
\[
\Bu^{h}(\Bx)=\left.\dif{\By(\Bx;h,\Gl)}{\Gl}\right|_{\Gl=0}=
a'(0)r\Be_{r}-z\Be_{z}=\nu r\Be_{r}-z\Be_{z}
\]
is independent of $h$. Here we computed that $a'(0)=\nu$ (Poisson's ratio) by
differentiating (\ref{rnbc}) in $\Gl$ at $\Gl=0$.

\subsection{Stability of the trivial branch}
\label{sub:stab}
We define critical strain $\Gl_{\rm crit}$ in terms of the second variation of
energy
\begin{equation}
  \label{secvar}
\Gd^{2}\CE(\BGf;h,\Gl)=\int_{\CC_{h}}(W_{\BF\BF}(\Grad\By(\Bx;h,\Gl))\Grad\BGf,\Grad\BGf)d\Bx,
\end{equation}
defined on the space of admissible variations
\[
V_{h}^{\circ}=\{\BGf\in W^{1,\infty}(\CC_{h};\mathbb R^3):
\phi_{\Gth}(r,\Gth,0)=\phi_{z}(r,\Gth,0)=\phi_{\Gth}(r,\Gth,L)=\phi_{z}(r,\Gth,L)=0\}.
\]
By density of $W^{1,\infty}(\CC_{h};\mathbb R^3)$ in $W^{1,2}(\CC_{h};\mathbb
R^3)$ we extend the space of admissible variations from
$V_{h}^{\circ}$ to its closure $V_{h}$ in $W^{1,2}$.
\begin{equation}
\label{Breather}
V_{h}=\{\BGf\in W^{1,2}(\CC_{h};\mathbb R^3):
\phi_{\Gth}(r,\Gth,0)=\phi_{z}(r,\Gth,0)=\phi_{\Gth}(r,\Gth,L)=\phi_{z}(r,\Gth,L)=0\}.
\end{equation}

The critical strain $\Gl_{\rm crit}$ can be defined as follows.
\begin{equation}
  \label{truebl}
  \Gl_{\rm crit}(h)=\inf\{\Gl>0:\Gd^{2}\CE(\BGf;h,\Gl)<0\text{ for some }\BGf\in
  V_{h} \}.
\end{equation}
While this definition is unambiguous, it is inconvenient, since the critical strain
strongly depends on the choice of the nonlinear energy density function.
Instead, we will focus only on the leading order asymptotics of the
critical strain, as $h\to 0$. The corresponding buckling mode, to be defined
below, will also be understood in an asymptotic sense.
\begin{definition}
  \label{def:asymbm}
We say that a function $\Gl(h)\to 0$, as $h\to 0$ is a buckling load if
\begin{equation}
  \label{asymbl}
  \lim_{h\to 0}\frac{\Gl(h)}{\Gl_{\rm crit}(h)}=1.
\end{equation}
A \textbf{buckling mode} is a family of variations $\BGf_{h}\in V_{h}\setminus\{0\}$, such that
\begin{equation}
  \label{trubm}
\lim_{h\to 0}\frac{\Gd^{2}\CE(\BGf_{h};h,\Gl_{\rm crit}(h))}
{\Gl_{\rm crit}(h)\dif{(\Gd^{2}\CE)}{\Gl}(\BGf_{h};h,\Gl_{\rm crit}(h))}=0.
\end{equation}
\end{definition}
Targeting only the leading order asymptotics allows us to determine
critical strain and buckling modes from a \emph{constitutively linearized}
second variation \cite{grtr07}:
\begin{equation}
  \label{ccsv}
\Gd^{2}\CE_{\rm cl}(\BGf;h,\Gl)=\int_{\CC_{h}}\{\av{\SFL_{0}e(\BGf),e(\BGf)}+
\Gl\av{\BGs_{h},\Grad\BGf^{T}\Grad\BGf}\}d\Bx, \qquad\BGf\in V_{h},
\end{equation}
 and $\BGs_{h}$ is the linear elastic stress
\begin{equation}
  \label{minusstress}
 \BGs_{h}(\Bx)=\SFL_{0}e(\Bu^{h}(\Bx)).
\end{equation}
Since the first term in (\ref{ccsv}) is always non-negative we define the set
\begin{equation}
  \label{Adef}
\CA_{h}=\left\{\BGf\in V_{h}:\av{\BGs_{h},\Grad\BGf^{T}\Grad\BGf}<0\right\}
\end{equation}
of potentially destabilizing variations. The constitutively linearized
critical load will then be determined by minimizing the Rayleigh quotient
\begin{equation}
  \label{Rq}
\mathfrak{R}(h,\BGf)=-\frac{\int_{\GO_{h}}\av{\SFL_{0}e(\BGf),e(\BGf)}d\Bx}
{\int_{\GO_{h}}\av{\BGs_{h},\Grad\BGf^{T}\Grad\BGf}d\Bx}.
\end{equation}
The functional $\mathfrak{R}(h,\BGf)$ expresses the relative strength of the
destabilizing compressive stress, measured by the functional
\begin{equation}
  \label{compres.measure}
  \mathfrak{C}_{h}(\BGf)=\int_{\GO_{h}}\av{\BGs_{h},\Grad\BGf^{T}\Grad\BGf}d\Bx
\end{equation}
and the reserve of structural stability measured by the functional
\begin{equation}
\label{stabil.measure}
\mathfrak{S}_{h}(\BGf)=\int_{\GO_{h}}\av{\SFL_{0}e(\BGf),e(\BGf)}d\Bx.
\end{equation}
\begin{definition}
  \label{def:Bload}
The \textbf{constitutively linearized buckling load} $\Gl_{\rm cl}(h)$ is defined by
\begin{equation}
  \label{clin}
\Gl_{\rm cl}(h)=\inf_{\BGf\in\CA_{h}}\mathfrak{R}(h,\BGf).
\end{equation}
  We say that the family of variations $\{\BGf_{h}\in\CA_{h}:h\in(0,h_{0})\}$
  is a \textbf{constitutively linearized buckling mode} if
  \begin{equation}
    \label{clbm}
\lim_{h\to 0}\frac{\mathfrak{R}(h,\BGf_{h})}{\Gl_{\rm cl}(h)}=1.
  \end{equation}
\end{definition}
In \cite{grtr07} we have defined a measure of ``slenderness'' of the body in
terms of the Korn constant
\begin{equation}
  \label{KK}
  K(V_{h})=\inf_{\BGf\in V_{h}}\frac{\|e(\BGf)\|^{2}_{L^{2}(\GO_{h})}}{\|\Grad\BGf\|^{2}_{L^{2}(\GO_{h})}}.
\end{equation}
It is obvious, that if $K(V_{h})$ stays uniformly positive, then so does the
constitutively linearized second variation $\Gd^{2}\CE_{\rm cl}(\BGf;h,\Gl(h))$
as a quadratic form on $V_{h}$, for any $\Gl(h)\to 0$, as $h\to 0$.
\begin{definition}
  \label{def:slender}
We say that the  body $\GO_{h}$ is \textbf{slender} if
\begin{equation}
  \label{slender}
  \lim_{h\to 0}K(V_{h})=0.
\end{equation}
\end{definition}
This notion of slenderness requires not only geometric slenderness of the
domain but also traction-dominated \bc s conveniently encoded in the subspace
$V_{h}$, satisfying $W_{0}^{1,2}(\GO_{h};\bb{R}^{3})\subset V_{h}\subset
W^{1,2}(\GO_{h};\bb{R}^{3})$.

We can now state sufficient conditions, established in \cite{grha}, under which
the constitutively linearized buckling load and buckling mode, defined in
(\ref{clin})--(\ref{clbm}), verify Definition~\ref{def:asymbm}.
\begin{theorem}
  \label{th:crit}
Suppose that the body is slender in the sense of
Definition~\ref{def:slender}. Assume that the constitutively linearized
critical load $\Gl_{\rm cl}(h)$, defined in (\ref{clin}) satisfies
$\Gl_{\rm cl}(h)>0$ for all sufficiently small $h$ and
\begin{equation}
  \label{sufcond}
  \lim_{h\to 0}\frac{\Gl_{\rm cl}(h)^{2}}{K(V_{h})}=0.
\end{equation}
Then $\Gl_{\rm cl}(h)$ is the buckling load and any constitutively linearized
buckling mode $\BGf_{h}$ is a buckling mode in the sense of Definition~\ref{def:asymbm}.
\end{theorem}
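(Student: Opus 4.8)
The plan is to show two things: (a) the constitutively linearized buckling load $\Gl_{\rm cl}(h)$ is a buckling load in the sense of \eqref{asymbl}, i.e.\ $\Gl_{\rm cl}(h)/\Gl_{\rm crit}(h)\to 1$; and (b) any constitutively linearized buckling mode $\BGf_h$ satisfies \eqref{trubm}. The underlying idea, from \cite{grtr07}, is that the true second variation $\Gd^2\CE(\BGf;h,\Gl)$ and the constitutively linearized one $\Gd^2\CE_{\rm cl}(\BGf;h,\Gl)$ differ only by terms that are asymptotically negligible \emph{relative to the reserve of stability} $\mathfrak{S}_h(\BGf)$, once $\Gl$ is at the scale of $\Gl_{\rm cl}(h)$ and the slenderness/smallness hypothesis \eqref{sufcond} holds.

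\emph{Step 1: error estimate between the two second variations.} Using (P1)–(P2), Taylor-expand $W_{\BF\BF}(\Grad\By(\Bx;h,\Gl))$ about $\BF=\BI$, and use \eqref{fundass}–\eqref{reglbda} from Definition~\ref{def:trbr} to replace $\Grad\By$ by $\BI+\Gl\Grad\Bu^h$ up to $O(\Gl^2)$ in $L^\infty$. Frame indifference forces the leading $\Gl$-linear correction to act only through the stress term $\Gl\av{\BGs_h,\Grad\BGf^T\Grad\BGf}$ appearing in \eqref{ccsv} (this is the standard computation showing the geometric/"load" term has this precise form, with $\BGs_h=\SFL_0 e(\Bu^h)$). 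All remaining discrepancies are controlled by
\begin{equation}
\label{Qest}
|\Gd^2\CE(\BGf;h,\Gl)-\Gd^2\CE_{\rm cl}(\BGf;h,\Gl)|\le C\Gl^2\|\Grad\BGf\|_{L^2(\GO_h)}^2,
\end{equation}
uniformly in $h$, and similarly for the $\Gl$-derivatives of the two functionals; the key point is that the right side carries $\|\Grad\BGf\|^2$, not $\|e(\BGf)\|^2$.

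\emph{Step 2: absorb the error using the Korn constant.} By definition of $K(V_h)$ in \eqref{KK}, $\|\Grad\BGf\|_{L^2}^2\le K(V_h)^{-1}\|e(\BGf)\|_{L^2}^2=K(V_h)^{-1}\mathfrak{S}_h(\BGf)$. Hence the right side of \eqref{Qest} is bounded by $C\Gl^2 K(V_h)^{-1}\mathfrak{S}_h(\BGf)$. Now evaluate along a (near-)optimal test family at $\Gl\sim\Gl_{\rm cl}(h)$: since the optimal Rayleigh quotient is $\Gl_{\rm cl}(h)$, near-minimizers have $\mathfrak{C}_h(\BGf)\approx-\Gl_{\rm cl}(h)^{-1}\mathfrak{S}_h(\BGf)$, so $\Gd^2\CE_{\rm cl}$ and its $\Gl$-derivative are both of order $\mathfrak{S}_h(\BGf)$ at the relevant scales. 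Dividing \eqref{Qest} by $\Gl_{\rm cl}(h)\,\partial_\Gl(\Gd^2\CE)$ and invoking \eqref{sufcond}, namely $\Gl_{\rm cl}(h)^2/K(V_h)\to0$, shows the error contributes a vanishing fraction: this yields \eqref{trubm} for $\BGf_h$, proving (b). Running the inequality in both directions (lower bound on $\Gl_{\rm crit}$ by showing $\Gd^2\CE>0$ below $(1-\eps)\Gl_{\rm cl}$, upper bound by exhibiting a destabilizing mode slightly above $(1+\eps)\Gl_{\rm cl}$) gives $\Gl_{\rm crit}(h)=(1+o(1))\Gl_{\rm cl}(h)$, proving (a).

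\emph{Main obstacle.} The delicate point is making Step 1 quantitatively uniform in $h$: the Taylor remainder for $W_{\BF\BF}$ involves third derivatives of $W$ evaluated along the trivial branch, and one must check that the $O(\Gl^2)$ bound in \eqref{Qest} truly has an $h$-independent constant $C$ — this is where the uniform boundedness of the trivial branch (noted after Lemma~\ref{lem:trbr}, here trivially since the branch is $h$-independent) and the $C^2$ regularity of $a(\Gl)$ from Lemma~\ref{lem:trbr} are used. The second subtlety is that \eqref{trubm} has $\partial_\Gl(\Gd^2\CE)$ in the denominator, so one must verify this derivative is comparable to $\partial_\Gl(\Gd^2\CE_{\rm cl})=\mathfrak{C}_h(\BGf_h)$ and does not degenerate; this again follows from \eqref{Qest} applied to $\Gl$-derivatives together with $\mathfrak{C}_h(\BGf_h)\sim-\Gl_{\rm cl}(h)^{-1}\mathfrak{S}_h(\BGf_h)$ and \eqref{sufcond}. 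Everything else is bookkeeping with the Rayleigh quotient \eqref{Rq}.
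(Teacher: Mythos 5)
Your proposal is essentially the argument the theorem rests on: the paper itself does not prove Theorem~\ref{th:crit} but imports it from \cite{grha} (building on \cite{grtr07}), and the intended proof is exactly your two steps --- Taylor-expand $W_{\BF\BF}$ along the trivial branch using \eqref{fundass}--\eqref{reglbda}, bound the discrepancy between $\Gd^{2}\CE$ and $\Gd^{2}\CE_{\rm cl}$, absorb it into $\mathfrak{S}_{h}(\BGf)$ via the Korn constant \eqref{KK}, and run the Rayleigh-quotient comparison in both directions using \eqref{sufcond}. One small imprecision: frame indifference does not make the $\Gl$-linear correction act \emph{only} through $\Gl\av{\BGs_{h},\Grad\BGf^{T}\Grad\BGf}$; there are additional $\Gl$-linear terms of the form $\Gl\av{\SFM\, e(\BGf),e(\BGf)}$ and a cross term of order $\Gl\|e(\BGf)\|\,\|\Grad\BGf\|$, so the error bound should read $C\bigl(\Gl\|e(\BGf)\|^{2}+\Gl\|e(\BGf)\|\,\|\Grad\BGf\|+\Gl^{2}\|\Grad\BGf\|^{2}\bigr)$ rather than $C\Gl^{2}\|\Grad\BGf\|^{2}$ alone --- but these extra terms are $O(\Gl)\mathfrak{S}_{h}$ and $O(\Gl/\sqrt{K(V_{h})})\mathfrak{S}_{h}$, both of which vanish under \eqref{sufcond} (which gives $\Gl_{\rm cl}/\sqrt{K(V_{h})}\to 0$ and hence $\Gl_{\rm cl}\to 0$), so the conclusion is unaffected.
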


Now we will show that Theorem~\ref{th:crit} applies to the axially compressed
circular cylindrical shells. The asymptotics of the Korn constant $K(V_{h})$,
as $h\to 0$, was established in \cite{grha14}.
\begin{theorem}
  \label{th:KCas}
Let $V_{h}$ be given by (\ref{Breather}). Then, there exist positive constants
$c(L)<C(L)$, depending only on $L$, such that
\begin{equation}
  \label{KI}
 c(L)h^{3/2}\leq K(V_{h})\leq C(L)h^{3/2}.
\end{equation}
\end{theorem}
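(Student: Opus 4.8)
The plan is to establish the two-sided bound $c(L)h^{3/2}\le K(V_h)\le C(L)h^{3/2}$ by exhibiting a near-optimal test displacement for the upper bound and proving a matching Korn-type lower bound on the subspace $V_h$.

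For the upper bound, I would construct an explicit family $\BGf_h\in V_h$ for which the ratio $\|e(\BGf_h)\|_{L^2}^2/\|\Grad\BGf_h\|_{L^2}^2$ scales like $h^{3/2}$. The natural candidate is a single Fourier harmonic in $\Gth$ and a sine/cosine in $z$ (so that the boundary conditions in (\ref{Breather}) hold), with wave numbers $n\sim h^{-1/2}$ in the angular direction and order-one in $z$—this is precisely the Koiter-circle scaling. One picks the relative amplitudes of the three components $\phi_r,\phi_\Gth,\phi_z$ so that the membrane strains $e_{\Gth\Gth}$, $e_{zz}$, $e_{\Gth z}$ cancel to leading order (inextensional-type ansatz), leaving only the bending contributions of order $h$ in the strain while the gradient stays order one. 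A direct computation of the integrals in cylindrical coordinates then gives the claimed $h^{3/2}$ scaling, establishing $K(V_h)\le C(L)h^{3/2}$.

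For the lower bound one must show that \emph{no} admissible variation does better, i.e. $\|\Grad\BGf\|_{L^2}^2\le C(L)h^{-3/2}\|e(\BGf)\|_{L^2}^2$ for all $\BGf\in V_h$. I would expand $\BGf$ in a Fourier series in $\Gth$ and $z$ (respecting the boundary conditions), so the quadratic form decouples across harmonics, and then prove the inequality harmonic-by-harmonic with a constant uniform in the mode numbers. The key structural point, as emphasized in the introduction, is that the relevant asymptotically sharp Korn-like inequalities—bounding individual components of $\Grad\BGf$ by $\|e(\BGf)\|_{L^2}$—are exactly the ones derived in the companion paper \cite{grha14}; I would invoke those component-wise estimates and combine them. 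The worst case is the balance between the membrane terms (which control the in-plane components with no $h$-loss) and the radial/bending terms (which cost a factor $h^{-1}$ for $\partial_r$-type derivatives and $h^{-1/2}$ for the tangential derivatives of $\phi_r$ on the Koiter circle), and optimizing over the mode numbers reproduces the $h^{3/2}$ exponent from below.

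The main obstacle is the lower bound, specifically obtaining a Korn constant that is \emph{uniform} over all Fourier modes $n$ and $k$ simultaneously: for a fixed mode the inequality is classical, but the constant degenerates as $n\to\infty$ unless one exploits the interplay between the three strain components and the thin annular geometry $I_h=[1-h/2,1+h/2]$. I expect this to be handled by the sharp anisotropic Korn inequalities of \cite{grha14}, so that the proof here reduces to assembling those estimates and carrying out the mode-number optimization; the upper-bound construction, by contrast, is a routine (if delicate) explicit calculation.
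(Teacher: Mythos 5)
First, a point of comparison: the paper does not actually prove Theorem~\ref{th:KCas}; it is quoted from the companion paper \cite{grha14}. So your proposal is really a sketch of what that companion paper must contain. At that level your architecture (an explicit near-optimal test function for the upper bound; Fourier decomposition plus sharp Korn-type estimates for the lower bound) is the right one, and deferring the hard lower bound to \cite{grha14} mirrors what the authors themselves do.

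However, your upper-bound construction contains a concrete error. You take angular wave number $n\sim h^{-1/2}$ with order-one $z$-dependence and call this ``the Koiter-circle scaling.'' It is not: the Koiter circle $\hat m/(\hat m^{2}+n^{2})\sim\sqrt{\Gl^{*}(h)}\sim h^{1/2}$ with $\hat m\sim 1$ forces $n\sim h^{-1/4}$, which is exactly the scaling of the extremal ansatz (\ref{ansatz0}) reproduced in Theorem~\ref{th:KI} (there $\eta=\Gth/\sqrt[4]{h}$). The distinction is fatal for the bound you want. In an inextensional-type ansatz with $\phi_{r}\sim 1$ the dominant gradient entries are $\phi_{r,\Gth},\phi_{\Gth,r}\sim n$, so $\|\Grad\BGf\|^{2}\sim h\,n^{2}$, while the residual bending strain is of order $(r-1)\,\Md_{\Gth}^{2}\phi_{r}\sim h\,n^{2}$. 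With $n\sim h^{-1/2}$ this bending strain is order one, giving $\|e(\BGf)\|^{2}\sim h$ and a Rayleigh quotient of order $h$ only, which does not prove $K(V_{h})\le C(L)h^{3/2}$. The correct balance equates the bending strain $h\,n^{2}$ with the order-$h^{1/2}$ membrane residual, i.e.\ $n\sim h^{-1/4}$, yielding $\|e(\BGf)\|^{2}/\|\Grad\BGf\|^{2}\sim h^{2}/h^{1/2}=h^{3/2}$. Separately, for the lower bound, ``combining the component-wise estimates'' (\ref{thetaz}) and (\ref{rz}) cannot yield (\ref{KI}): those control only two of the nine gradient entries (nothing is said about $\phi_{\Gth,r}$, $\phi_{r,\Gth}$, $\phi_{r,r}$, etc.), and the full inequality requires additional structure, e.g.\ the weighted inequality (\ref{KI:first.and.half}) together with control of $\|\phi_{r}\|$ by $\|e(\BGf)\|$. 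That argument is the substantive content of \cite{grha14} and is not reconstructed in your sketch.
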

In order to establish (\ref{sufcond}) we need to estimate $\Gl_{\rm cl}(h)$.
For the trivial branch (\ref{trbr}) we compute
\begin{equation}
  \label{perfect.stress.tensor}
\BGs_{h}=-E\tns{\Be_{z}},
\end{equation}
where $E$ is the Young's modulus. Hence,
\begin{equation}
  \label{ex.compres.measure}
\mathfrak{C}_{h}(\BGf)=
-E(\|\Gf_{r,z}\|^{2}+\|\Gf_{z,z}\|^{2}+\|\Gf_{\Gth,z}\|^{2}).
\end{equation}
where from now on $\|\cdot\|$ will always denote the $L^{2}$-norm on $\CC_{h}$.
In order to estimate $\Gl_{\rm cl}(h)$ we need to prove Korn-like inequalities
for the gradient components, $\Gf_{r,z}$, $\Gf_{z,z}$, and
$\Gf_{\Gth,z}$. This was done in \cite{grha14}.
\begin{theorem}
\label{th:KI}
There exist a constant $C(L)>0$ depending only on $L$ such that for any $\BGf\in V_{h}$ one has,
 \begin{equation}
  \label{thetaz}
  \|\phi_{\Gth,z}\|^{2}\le\frac{C(L)}{\sqrt{h}}\|e(\BGf)\|^{2},
\end{equation}
\begin{equation}
  \label{rz}
   \|\phi_{r,z}\|^{2}\le\frac{C(L)}{h}\|e(\BGf)\|^{2}.
\end{equation}
Moreover, the powers of $h$ in the inequalities (\ref{KI})--(\ref{rz}) are optimal,
achieved \emph{simultaneously} by the ansatz
\begin{equation}
\begin{cases}
\label{ansatz0}
\phi^{h}_{r}(r,\Gth,z)=&-W_{,\eta\eta}\left(\frac{\Gth}{\sqrt[4]{h}},z\right)\\[2ex]
\phi^{h}_{\Gth}(r,\Gth,z)=&r\sqrt[4]{h}W_{,\eta}\left(\frac{\Gth}{\sqrt[4]{h}},z\right)+
\frac{r-1}{\sqrt[4]{h}}W_{,\eta\eta\eta}\left(\frac{\Gth}{\sqrt[4]{h}},z\right),\\[2ex]
\phi^{h}_{z}(r,\Gth,z)=&(r-1)W_{,\eta\eta z}\left(\frac{\Gth}{\sqrt[4]{h}},z\right)
-\sqrt{h}W_{,z}\left(\frac{\Gth}{\sqrt[4]{h}},z\right),
\end{cases}
\end{equation}
 where $W(\eta,z)$ can be any smooth compactly supported function on
 $(-1,1)\times(0,L)$, with the understanding that the above formulas hold on a
 single period $\Gth\in[0,2\pi]$, while the function $\BGf^{h}(r,\Gth,z)$ is
 $2\pi$-periodic in $\Gth$.
\end{theorem}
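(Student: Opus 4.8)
Since the two inequalities are the main results of \cite{grha14}, the plan is to sketch their proof and then verify the optimality claim in detail. I would first Fourier-decompose $\BGf=\sum_{n\in\mathbb Z}\BGf_{n}(r,z)e^{in\Gth}$; replacing $\partial_{\Gth}$ by multiplication by $in$ turns each $L^{2}(\CC_{h})$-norm in the statement into an orthogonal sum over $n$, so it suffices to establish the one-mode bounds $\|v_{n,z}\|^{2}\le C(L)h^{-1/2}\|e_{n}\|^{2}$ and $\|u_{n,z}\|^{2}\le C(L)h^{-1}\|e_{n}\|^{2}$, with $C(L)$ independent of $n$ and $h$; here $(u_{n},v_{n},w_{n})$ are the cylindrical components of $\BGf_{n}$ on the thin rectangle $I_{h}\times(0,L)$, clamped by $v_{n}=w_{n}=0$ at $z=0,L$, and $e_{n}$ is the strain of the $n$-th mode.

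For the moderate wave numbers $|n|\le h^{-1/4}$ the first bound is elementary: since $e_{zz}=w_{n,z}$ with $w_{n}$ vanishing at $z=0,L$, Poincar\'e in $z$ gives $\|w_{n}\|\le(L/\pi)\|e_{n}\|$, and then $2e_{\Gth z}=v_{n,z}+\tfrac{in}{r}w_{n}$ gives $\|v_{n,z}\|\le C(1+|n|)\|e_{n}\|\le Ch^{-1/4}\|e_{n}\|$. The second bound is not elementary even for moderate $n$, because the identity $u_{n,z}=2e_{rz}-w_{n,r}$ involves $w_{n,r}$, which the strain does not control, forcing one into a genuinely two-dimensional Korn-type estimate on the thin rectangle. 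For the critical and large wave numbers $|n|\gtrsim h^{-1/4}$ the factor $n^{2}$ is too lossy; the idea is to use that, unless $\BGf_{n}$ is close to an infinitesimal isometry of the cylinder, the membrane strains $e_{\Gth\Gth}=\tfrac1r(inv_{n}+u_{n})$, $e_{r\Gth}$ and $e_{\Gth z}$ are already large, while genuine near-isometries compatible with the clamping at $z=0,L$ force $v_{n}$ and $w_{n}$ to be small, of relative order $1/|n|$. Making this quantitative reduces, for each fixed $n$, to analyzing the Euler--Lagrange system of ``minimize $\|e_{n}\|^{2}$ at fixed $\|v_{n,z}\|^{2}$'' (respectively at fixed $\|u_{n,z}\|^{2}$): a linear ODE system in $r$ on $I_{h}$ with natural boundary conditions at $r=1\pm\tfrac h2$, the $z$-variable having been removed by a further Poincar\'e/Fourier reduction using the clamping; a boundary-layer analysis in the slenderness parameter should give that the least eigenvalue is of order $\sqrt h$ (respectively $h$) and is minimized near $|n|\sim h^{-1/4}$. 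The main obstacle is exactly this step --- controlling the eigenvalue uniformly in $n$, and sharply in the resonant window $|n|\sim h^{-1/4}$ --- which is the technical core of \cite{grha14}.

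For the optimality claim I would substitute the ansatz (\ref{ansatz0}) and track powers of $h$, writing $\eta=\Gth/\sqrt[4]h$. A direct computation gives the exact cancellations $e_{rr}(\BGf^{h})=e_{rz}(\BGf^{h})=e_{r\Gth}(\BGf^{h})=0$ (for instance $e_{r\Gth}$ vanishes because $\tfrac1r+\tfrac{r-1}{r}=1$), the surviving strains being $e_{\Gth\Gth}(\BGf^{h})=\tfrac{r-1}{r\sqrt h}W_{,\eta\eta\eta\eta}+O(h)$, $e_{zz}(\BGf^{h})=-\sqrt h\,W_{,zz}+O(h)$ and $e_{\Gth z}(\BGf^{h})=O(h^{3/4})$, so that every entry of $e(\BGf^{h})$ is $O(\sqrt h)$; the dominant entries of $\Grad\BGf^{h}$ are $\partial_{r}\Gf^{h}_{\Gth}\sim h^{-1/4}W_{,\eta\eta\eta}$ and $\tfrac1r(\partial_{\Gth}\Gf^{h}_{r}-\Gf^{h}_{\Gth})\sim -h^{-1/4}W_{,\eta\eta\eta}$, while $\Gf^{h}_{\Gth,z}\sim h^{1/4}W_{,\eta z}$ and $\Gf^{h}_{r,z}=-W_{,\eta\eta z}$, all remaining entries being smaller. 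Within one period $\BGf^{h}$ is supported where $|\Gth|<h^{1/4}$, a set of measure of order $h^{1/4}\cdot h=h^{5/4}$, and under $r=1+ht$, $\Gth=h^{1/4}\eta$ the volume element $d\Bx=r\,dr\,d\Gth\,dz$ becomes of order $h^{5/4}\,dt\,d\eta\,dz$; integrating, and using that the leading coefficients do not vanish for generic $W$, gives
\[
\|e(\BGf^{h})\|^{2}\sim h^{9/4},\quad\|\Grad\BGf^{h}\|^{2}\sim h^{3/4},\quad\|\Gf^{h}_{\Gth,z}\|^{2}\sim h^{7/4},\quad\|\Gf^{h}_{r,z}\|^{2}\sim h^{5/4}.
\]
Hence the single family $\BGf^{h}$ realizes $\|e(\BGf^{h})\|^{2}/\|\Grad\BGf^{h}\|^{2}\sim h^{3/2}$, $\|\Gf^{h}_{\Gth,z}\|^{2}/\|e(\BGf^{h})\|^{2}\sim h^{-1/2}$ and $\|\Gf^{h}_{r,z}\|^{2}/\|e(\BGf^{h})\|^{2}\sim h^{-1}$ simultaneously, so the powers of $h$ in (\ref{KI}), (\ref{thetaz}) and (\ref{rz}) cannot be improved.
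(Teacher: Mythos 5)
Your verification of the optimality claim is correct and complete: the cancellations $e_{rr}=e_{rz}=e_{r\Gth}=0$, the surviving strains $e_{\Gth\Gth}\sim\frac{r-1}{\sqrt h}W_{,\eta\eta\eta\eta}$, $e_{zz}\sim-\sqrt h\,W_{,zz}$, $e_{\Gth z}=O(h^{3/4})$, and the resulting scalings $\|e(\BGf^{h})\|^{2}\sim h^{9/4}$, $\|\Grad\BGf^{h}\|^{2}\sim h^{3/4}$, $\|\Gf^{h}_{\Gth,z}\|^{2}\sim h^{7/4}$, $\|\Gf^{h}_{r,z}\|^{2}\sim h^{5/4}$ all check out, and they do show that the three exponents $h^{3/2}$, $h^{-1/2}$, $h^{-1}$ are attained simultaneously by the single family (\ref{ansatz0}). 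Note that the present paper offers no proof of this theorem at all --- it is imported verbatim from the companion paper \cite{grha14} --- so on the optimality half you have actually supplied more than the text does.

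The gap is in the two inequalities themselves. Your Fourier reduction and the elementary bound $\|v_{n,z}\|\le C(1+|n|)\|e_{n}\|$ only deliver (\ref{thetaz}) for $|n|\lesssim h^{-1/4}$, and you give no argument at all for (\ref{rz}) beyond observing that $u_{n,z}=2e_{rz}-w_{n,r}$ involves a quantity the strain does not control. For the resonant window $|n|\sim h^{-1/4}$ --- which your own ansatz shows is exactly where both inequalities are saturated --- the proposal offers only a programme (``near-isometries force $v_{n},w_{n}$ to be small,'' ``a boundary-layer analysis should give that the least eigenvalue is of order $\sqrt h$'') with no mechanism that actually produces the rates $h^{-1/2}$ and $h^{-1}$ uniformly in $n$; you flag this yourself as the main obstacle. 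A proof has to quantify the trade-off between bending and membrane energy in that window, e.g.\ via the weighted Korn inequality $\|\Grad\BGf\|^{2}\le C(L)(\|\Gf_{r}\|/h+\|e(\BGf)\|)\|e(\BGf)\|$ of Theorem~\ref{th:KtI}, which is the tool the companion paper builds for precisely this purpose and which your sketch never connects to. As it stands, the proposal establishes optimality of the exponents but not the upper bounds (\ref{thetaz})--(\ref{rz}) themselves.
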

\begin{corollary}
  \label{cor:lambda}
\begin{equation}
 \label{perfect.lambda.hat.upper}
 ch\le\Gl_{\rm cl}(h)\leq Ch.
 \end{equation}
\end{corollary}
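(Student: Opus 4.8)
The plan is to sandwich $\Gl_{\rm cl}(h)=\inf_{\BGf\in\CA_{h}}\mathfrak{R}(h,\BGf)$ between two multiples of $h$ using the explicit form of the compressive-stress functional $\mathfrak{C}_{h}$ in (\ref{ex.compres.measure}) together with the Korn-type inequalities of Theorem~\ref{th:KI} for the upper bound on $\Gl_{\rm cl}(h)$, and the optimality ansatz (\ref{ansatz0}) for the lower bound (matching bound) on $\Gl_{\rm cl}(h)$. First I would establish the upper estimate $\mathfrak{S}_{h}(\BGf)\le C h\,|\mathfrak{C}_{h}(\BGf)|$ for every $\BGf\in\CA_{h}$. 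From (\ref{ex.compres.measure}), $|\mathfrak{C}_{h}(\BGf)|=E(\|\phi_{r,z}\|^{2}+\|\phi_{z,z}\|^{2}+\|\phi_{\Gth,z}\|^{2})$. The term $\|\phi_{z,z}\|^{2}$ is controlled directly by $\|e(\BGf)\|^{2}$ with an $h$-independent constant since $\phi_{z,z}$ is itself a strain component. For $\|\phi_{\Gth,z}\|^{2}$ and $\|\phi_{r,z}\|^{2}$ the inequalities (\ref{thetaz}) and (\ref{rz}) give bounds by $C(L)h^{-1/2}\|e(\BGf)\|^{2}$ and $C(L)h^{-1}\|e(\BGf)\|^{2}$ respectively; the worst (largest) of these is the $h^{-1}$ term, hence $|\mathfrak{C}_{h}(\BGf)|\ge c\,h\,\|e(\BGf)\|^{2}=c\,h\,\mathfrak{S}_{h}(\BGf)$ for small $h$, which rearranges to $\mathfrak{R}(h,\BGf)=\mathfrak{S}_{h}(\BGf)/|\mathfrak{C}_{h}(\BGf)|\le Ch$. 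Taking the infimum over $\CA_{h}$ yields $\Gl_{\rm cl}(h)\le Ch$.

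Next I would prove the lower bound $\Gl_{\rm cl}(h)\ge ch$. Here I need a \emph{lower} bound on the Rayleigh quotient valid for \emph{all} $\BGf\in\CA_{h}$, i.e. $\mathfrak{S}_{h}(\BGf)\ge c\,h\,|\mathfrak{C}_{h}(\BGf)|$, equivalently $\|e(\BGf)\|^{2}\ge c\,h\,(\|\phi_{r,z}\|^{2}+\|\phi_{z,z}\|^{2}+\|\phi_{\Gth,z}\|^{2})$. The bound on $\|\phi_{z,z}\|^{2}$ and $\|\phi_{\Gth,z}\|^{2}$ follow already from (\ref{thetaz}) and triviality of the $z,z$ component (giving even better powers, $h^{0}$ and $h^{1/2}$), so the binding estimate is exactly (\ref{rz}): $\|\phi_{r,z}\|^{2}\le C(L)h^{-1}\|e(\BGf)\|^{2}$, which is precisely $\|e(\BGf)\|^{2}\ge C(L)^{-1}h\,\|\phi_{r,z}\|^{2}$. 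Combining the three and using that the constant in front of $\|\phi_{r,z}\|^{2}$ is the smallest, I get $\|e(\BGf)\|^{2}\ge c\,h\,|\mathfrak{C}_{h}(\BGf)|/E$ uniformly over $V_{h}$, hence $\mathfrak{R}(h,\BGf)\ge ch$ for every $\BGf\in\CA_{h}$ and so $\Gl_{\rm cl}(h)\ge ch$.

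The one point requiring care — and the main obstacle — is to make sure the lower bound is not vacuous and the upper bound is actually \emph{attained} at the claimed order, i.e. that $\CA_{h}$ is nonempty and contains a family achieving $\mathfrak{R}(h,\BGf_{h})\le Ch$. This is where the ansatz (\ref{ansatz0}) enters: one plugs $\BGf^{h}$ into both $\mathfrak{S}_{h}$ and $\mathfrak{C}_{h}$, computes the leading-order behavior in $h$ of each (the numerator and denominator of the Rayleigh quotient), and checks that $\mathfrak{S}_{h}(\BGf^{h})=O(h)\cdot|\mathfrak{C}_{h}(\BGf^{h})|$ with $|\mathfrak{C}_{h}(\BGf^{h})|\ne 0$ so that $\BGf^{h}\in\CA_{h}$ and $\mathfrak{R}(h,\BGf^{h})\le Ch$. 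Since Theorem~\ref{th:KI} asserts that the powers of $h$ in (\ref{thetaz})–(\ref{rz}) are optimal and simultaneously achieved by (\ref{ansatz0}), this computation is guaranteed to close the gap between the two one-sided bounds; the bookkeeping of which Fourier/scaling terms dominate in $e(\BGf^{h})$ versus in $\phi^{h}_{r,z}$ is the only genuinely technical part, and it is essentially the content already verified in \cite{grha14}.
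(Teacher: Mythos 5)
Your second paragraph (the lower bound $\Gl_{\rm cl}(h)\ge ch$) is correct and is exactly the paper's argument: combining (\ref{thetaz}), (\ref{rz}), the trivial bound $\|\phi_{z,z}\|\le\|e(\BGf)\|$ and the coercivity (\ref{Lcoerc}) gives $|\mathfrak{C}_{h}(\BGf)|\le C E h^{-1}\|e(\BGf)\|^{2}\le Ch^{-1}\mathfrak{S}_{h}(\BGf)$ for every $\BGf$, hence $\mathfrak{R}(h,\BGf)\ge ch$ on all of $\CA_{h}$.

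Your first paragraph, however, contains a genuine error: you claim that (\ref{rz}) yields $|\mathfrak{C}_{h}(\BGf)|\ge c\,h\,\|e(\BGf)\|^{2}$ for every $\BGf\in\CA_{h}$, and hence the pointwise bound $\mathfrak{R}(h,\BGf)\le Ch$. This reverses the direction of the inequality. The estimates (\ref{thetaz})--(\ref{rz}) bound gradient components \emph{from above} by the strain, i.e.\ they bound $|\mathfrak{C}_{h}(\BGf)|$ from above and $\mathfrak{R}(h,\BGf)$ from below; they say nothing about a lower bound on $|\mathfrak{C}_{h}(\BGf)|$. Indeed the claimed pointwise inequality $\mathfrak{S}_{h}(\BGf)\le Ch|\mathfrak{C}_{h}(\BGf)|$ is false on $\CA_{h}$: a variation such as $\BGf=(0,0,\psi(z))$ with $\psi'\not\equiv 0$ lies in $\CA_{h}$ (since $\mathfrak{C}_{h}(\BGf)=-E\|\phi_{z,z}\|^{2}<0$) but has $\mathfrak{R}(h,\BGf)$ of order $1$, not $h$. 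The upper bound on the \emph{infimum} cannot come from any inequality valid for all $\BGf$; it comes solely from exhibiting one test family on which the Korn inequalities are saturated, namely the ansatz (\ref{ansatz0}), for which the \emph{reverse} estimate $\|\phi^{h}_{r,z}\|^{2}\ge c h^{-1}\|e(\BGf^{h})\|^{2}$ holds by the optimality statement in Theorem~\ref{th:KI}. Your third paragraph does invoke (\ref{ansatz0}), and that computation is the paper's entire proof of the upper bound --- but you present it as merely checking that $\CA_{h}$ is nonempty and that the bound is ``attained,'' rather than as the argument that establishes $\Gl_{\rm cl}(h)\le Ch$ in the first place. With the first paragraph deleted and the third paragraph promoted to the proof of the upper bound, the proposal coincides with the paper's proof.
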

\begin{proof}
  This is an immediate consequence of Theorem~\ref{th:KI}. The lower bound
  follows from inequalities (\ref{Lcoerc}), (\ref{thetaz}) and (\ref{rz}) (and
  also an obvious inequality $\|\phi_{z,z}\|\le\|e(\BGf)\|$). The upper bound
  follows from using a test function (\ref{ansatz0}) in the constitutively
  linearized second variation.
\end{proof}
Inequalities (\ref{KI}) and (\ref{perfect.lambda.hat.upper}) imply that the
condition (\ref{sufcond}) in Theorem~\ref{th:crit} is satisfied for the
axially compressed circular cylindrical shell.

\subsection{Buckling equivalence}
\label{sub:be}
The problem of finding the asymptotic behavior of the critical strain
$\Gl_{\rm crit}$ and the corresponding buckling mode, as $h\to 0$ now reduces
to minimization of the Rayleigh quotient (\ref{Rq}), which is expressed
entirely in terms of linear elastic data.  Even though this already represents
a significant simplification of our problem, its explicit solution is still
technically difficult. However, the asymptotic flexibility of the notion of
buckling load and buckling mode permits us to replace
$\mathfrak{R}(h,\BGf_{h})$ with an equivalent, but simpler functional. The
notion of buckling equivalence was introduced in \cite{grtr07} and developed
further in \cite{grha}. Here we give the relevant definition and theorems for
the sake of completeness.
\begin{definition}
  \label{def:equiv}
  Assume that $J(h,\BGf)$ is a variational functional defined on
  $\CB_h\subset\CA_{h}$. We say that the pair $(\CB_h, J(h,\BGf))$
  \textbf{characterizes buckling} if the following three conditions are
  satisfied
\begin{enumerate}
\item[(a)] Characterization of the buckling load:
If
\[
\Gl(h)=\inf_{\BGf\in\CB_{h}}J(h,\BGf),
\]
then $\Gl(h)$ is a buckling load in the sense of Definition~\ref{def:asymbm}.
\item[(b)] Minimizing property of the buckling
  mode: If $\BGf_{h}\in\CB_{h}$ is a buckling mode in the sense of
  Definition~\ref{def:asymbm}, then
  \begin{equation}
    \label{bmode}
\lim_{h\to 0}\frac{J(h,\BGf_{h})}{\Gl(h)}=1.
  \end{equation}
\item[(c)] Characterization of the buckling mode:
If $\BGf_{h}\in\CB_{h}$ satisfies (\ref{bmode}) then it is a buckling mode.
\end{enumerate}
\end{definition}
\begin{definition}
  \label{def:Bequivalence}
  Two pairs $(\CB_h, J(h,\BGf))$ and $(\CB'_h, J'(h,\BGf))$ are called
  \textbf{buckling equivalent} if the pair $(\CB_h,J(h,\BGf))$ characterizes buckling if
  and only if $(\CB'_h,J'(h,\BGf))$ does.
\end{definition}
Of course this definition becomes meaningful only if the pairs
$(\CB_h,J(h,\BGf))$ and $(\CB'_h,J'(h,\BGf))$ are related.

The following lemma has been proved in \cite{grha}.

\begin{lemma}
\label{lem:pairB_hJ}
Suppose the pair $(\CB_{h},J(h,\BGf))$ characterizes buckling.  Let
$\CB'_{h}\subset\CB_{h}$ be such that it contains a buckling mode. Then the
pair $(\CB'_{h},J(h,\BGf))$ characterizes buckling\footnote{This lemma
  highlights the fact that Part (b) in Definition~\ref{def:equiv} is designed
  to capture \emph{the} buckling mode. We make no attempt to characterize an
  infinite set of geometrically distinct, yet energetically equivalent
  buckling modes that exist in our example.}.
\end{lemma}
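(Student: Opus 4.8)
The plan is to verify directly that the three conditions (a), (b), (c) of Definition~\ref{def:equiv} hold for the pair $(\CB'_{h},J(h,\BGf))$, using that they hold for $(\CB_{h},J(h,\BGf))$ and the single extra hypothesis that $\CB'_{h}$ contains a buckling mode. Write
\[
\Gl(h)=\inf_{\BGf\in\CB_{h}}J(h,\BGf),\qquad \Gl'(h)=\inf_{\BGf\in\CB'_{h}}J(h,\BGf).
\]
Since $\CB'_{h}\subset\CB_{h}$ we automatically have $\Gl'(h)\ge\Gl(h)$ for every $h$, so the only thing to control is that $\Gl'(h)$ is not asymptotically larger than $\Gl(h)$.

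First I would establish the matching upper bound. Let $\{\BGf_{h}\}\subset\CB'_{h}$ be the buckling mode furnished by hypothesis; by Definition~\ref{def:asymbm} and the fact that $(\CB_{h},J)$ characterizes buckling, part (b) of Definition~\ref{def:equiv} applied to $\{\BGf_{h}\}$ (which lies in $\CB_{h}$) gives $\lim_{h\to0}J(h,\BGf_{h})/\Gl(h)=1$. On the other hand $\Gl(h)\le\Gl'(h)\le J(h,\BGf_{h})$, so dividing by $\Gl(h)$ and taking the limit forces $\lim_{h\to0}\Gl'(h)/\Gl(h)=1$. In particular $\Gl'(h)$ is a buckling load (it is asymptotic to $\Gl(h)$, which is one by condition (a) for the original pair, and ``being a buckling load'' in the sense of Definition~\ref{def:asymbm} is manifestly an equivalence-relation property under asymptotic equality of the loads). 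This is condition (a) for $(\CB'_{h},J)$.

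Condition (b) for the new pair is then immediate: if $\BGf_{h}\in\CB'_{h}$ is a buckling mode, then since $\CB'_{h}\subset\CB_{h}$ it is also an element of $\CB_{h}$, so condition (b) for the original pair yields $\lim_{h\to0}J(h,\BGf_{h})/\Gl(h)=1$, and combining with $\Gl'(h)/\Gl(h)\to1$ gives $\lim_{h\to0}J(h,\BGf_{h})/\Gl'(h)=1$. For condition (c), suppose $\BGf_{h}\in\CB'_{h}$ satisfies $\lim_{h\to0}J(h,\BGf_{h})/\Gl'(h)=1$; multiplying by $\Gl'(h)/\Gl(h)\to1$ shows $\lim_{h\to0}J(h,\BGf_{h})/\Gl(h)=1$, so $\BGf_{h}$ (viewed in $\CB_{h}$) satisfies (\ref{bmode}) for the original pair, and condition (c) for $(\CB_{h},J)$ tells us $\BGf_{h}$ is a buckling mode. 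This closes all three conditions.

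The only genuinely load-bearing point — and the step I would present most carefully — is the very first one: that the existence of \emph{one} buckling mode inside $\CB'_{h}$ is exactly what is needed to pin $\Gl'(h)$ to $\Gl(h)$ asymptotically. Everything else is a matter of threading the definitions, being careful that ``buckling load'' and ``buckling mode'' are asymptotic notions that are stable under replacing $\Gl(h)$ by an asymptotically equal $\Gl'(h)$ (this stability is read off directly from (\ref{asymbl}) and (\ref{trubm}), since those conditions involve $\Gl_{\rm crit}(h)$ and ratios that are insensitive to $o(1)$-relative perturbations). No new analysis of the functionals $\Gd^{2}\CE$, $\Gd^{2}\CE_{\rm cl}$, or the Korn constants is required; the lemma is purely formal once the upper bound $\Gl'(h)\lesssim\Gl(h)$ is in hand.
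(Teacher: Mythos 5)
Your proof is correct. The paper itself does not reproduce the argument (it cites an external reference), but the proof you give is the natural one: the existence of a single buckling mode inside $\CB'_{h}$, together with part (b) for the original pair and the trivial inequality $\Gl(h)\le\Gl'(h)\le J(h,\BGf_{h})$, pins $\Gl'(h)/\Gl(h)\to 1$, after which (a), (b), (c) for $(\CB'_{h},J)$ all follow by multiplying the relevant ratios by $\Gl'(h)/\Gl(h)$ or its reciprocal. You also correctly note the definitional subtlety that makes this work — namely that ``buckling mode'' (\ref{trubm}) is an intrinsic notion referencing $\Gl_{\rm crit}(h)$ and $\Gd^{2}\CE$, independent of the pair $(\CB_{h},J)$, so a buckling mode in $\CB'_{h}$ can legitimately be fed into part (b) of Definition~\ref{def:equiv} for the larger pair $(\CB_{h},J)$.
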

The key tool for simplification of functionals characterizing
buckling is the following theorem, \cite{grha}.
\begin{theorem}[Buckling equivalence]
\label{th:Bequivalence}
Suppose that $\Gl(h)$ is a buckling load in the sense of Definition~\ref{def:asymbm}.
If either
\begin{equation}
\label{J1J2}
\lim_{h\to 0}\Gl(h)\sup_{\BGf\in\CB_{h}}\left|\frac{1}{J_1(h,\BGf)}-\frac{1}{J_2(h,\BGf)}\right|=0,
\end{equation}
or
\begin{equation}
\label{J2J1}
\lim_{h\to 0}\nth{\Gl(h)}\sup_{\BGf\in\CB_{h}}|J_1(h,\BGf)-J_2(h,\BGf)|=0,
\end{equation}
then the pairs $(\CB_h, J_1(h,\BGf))$ and $(\CB_h, J_2(h,\BGf))$ are
buckling equivalent in the sense of Definition~\ref{def:Bequivalence}.
\end{theorem}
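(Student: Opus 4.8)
\textbf{Proof plan for Theorem~\ref{th:Bequivalence}.}
The plan is to show directly that, under either hypothesis \eqref{J1J2} or \eqref{J2J1}, the pair $(\CB_h,J_1(h,\BGf))$ characterizes buckling if and only if $(\CB_h,J_2(h,\BGf))$ does; by symmetry it suffices to verify the three conditions (a)--(c) of Definition~\ref{def:equiv} for $J_2$ assuming they hold for $J_1$. The unifying observation is that both hypotheses force the two functionals to agree, after multiplication by $\Gl(h)$, uniformly on $\CB_h$ up to an error that tends to $0$. Indeed, write $r_h(\BGf)=\Gl(h)|1/J_1(h,\BGf)-1/J_2(h,\BGf)|$ or $s_h(\BGf)=\Gl(h)^{-1}|J_1(h,\BGf)-J_2(h,\BGf)|$; in the first case
\[
\left|\frac{\Gl(h)}{J_1(h,\BGf)}-\frac{\Gl(h)}{J_2(h,\BGf)}\right|\le \sup_{\BGf\in\CB_h}r_h(\BGf)\to 0,
\]
and in the second, after dividing $|J_1-J_2|\le \Gl(h)\sup s_h$ by $J_i(h,\BGf)$ and using that on $\CB_h$ any buckling-relevant family has $J_i(h,\BGf)$ comparable to $\Gl(h)$ (so $\Gl(h)/J_i(h,\BGf)$ is bounded), one again gets $|\Gl(h)/J_1-\Gl(h)/J_2|\to 0$ along the relevant families. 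Thus in both cases the normalized functionals $\Gl(h)/J_1$ and $\Gl(h)/J_2$ are asymptotically indistinguishable on $\CB_h$.

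With this in hand, condition (a): set $\Gl_2(h)=\inf_{\CB_h}J_2(h,\BGf)$ and $\Gl_1(h)=\inf_{\CB_h}J_1(h,\BGf)=\Gl(h)$, the latter a buckling load by hypothesis. Taking infima in the uniform estimate $|\Gl(h)/J_1(h,\BGf)-\Gl(h)/J_2(h,\BGf)|\to0$ gives $\Gl(h)/\Gl_1(h)$ and $\Gl(h)/\Gl_2(h)$ having the same limit, hence $\Gl_2(h)/\Gl(h)\to 1$; since $\Gl(h)$ is a buckling load and \eqref{asymbl} is a relation only up to asymptotic equivalence, $\Gl_2(h)$ is a buckling load as well. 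For condition (b): if $\BGf_h\in\CB_h$ is a buckling mode, then by (b) for $J_1$ we have $J_1(h,\BGf_h)/\Gl(h)\to1$; feeding this single family into the uniform closeness of $\Gl(h)/J_1$ and $\Gl(h)/J_2$ yields $J_2(h,\BGf_h)/\Gl(h)\to1$, i.e. \eqref{bmode} for $J_2$. For condition (c): conversely, if $\BGf_h\in\CB_h$ satisfies $J_2(h,\BGf_h)/\Gl(h)\to1$, the same uniform estimate gives $J_1(h,\BGf_h)/\Gl(h)\to1$, so by (c) for $J_1$ the family $\BGf_h$ is a buckling mode. The reverse implication (characterization by $J_2$ implies characterization by $J_1$) is identical since \eqref{J1J2} and \eqref{J2J1} are symmetric in $J_1,J_2$.

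The main obstacle—and the place where the argument has to be handled with care rather than by pure symmetry pushing—is the reduction from \eqref{J2J1} to the ``reciprocal'' closeness statement: passing from additive closeness of $J_1,J_2$ to additive closeness of $1/J_1,1/J_2$ (scaled by $\Gl(h)$) requires a uniform lower bound on $J_1(h,\BGf)J_2(h,\BGf)$ of order $\Gl(h)^2$ along the families that matter. This is exactly why Definition~\ref{def:equiv}(a)--(c) are stated for families and why the statement only claims buckling \emph{equivalence}: one never needs the bound on all of $\CB_h$, only along buckling modes and near-minimizing sequences, where $J_i(h,\BGf)\asymp\Gl(h)$ holds automatically once $\Gl(h)$ is known to be a buckling load. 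I would make this precise by noting that for any family with $\liminf J_i(h,\BGf_h)/\Gl(h)>0$ the scaled reciprocals differ by at most $\Gl(h)^{-1}|J_1-J_2|\cdot\Gl(h)^2/(J_1J_2)\le C\sup_{\CB_h}s_h\to0$, and that the families appearing in conditions (a)--(c) are all of this type. Everything else is bookkeeping with Definitions~\ref{def:asymbm} and~\ref{def:equiv}.
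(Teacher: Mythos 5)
Your argument is correct. Note that the paper does not actually prove Theorem~\ref{th:Bequivalence} here --- it is imported from \cite{grha} --- so there is no in-text proof to compare against; your direct verification of conditions (a)--(c) of Definition~\ref{def:equiv}, transferring each one between $J_1$ and $J_2$ via the uniform smallness of $|\Gl(h)/J_1-\Gl(h)/J_2|$ and invoking symmetry of the hypotheses in $J_1,J_2$, is exactly the natural route and is complete. Two small remarks. First, in the case of hypothesis (\ref{J2J1}) you do not need to pass to reciprocals at all for condition (a): $|\inf_{\CB_h}J_1-\inf_{\CB_h}J_2|\le\sup_{\CB_h}|J_1-J_2|=o(\Gl(h))$ gives $\Gl_2(h)/\Gl_1(h)\to1$ directly, and for (b)--(c) the difference estimate divided by $\Gl(h)$ suffices since the relevant families satisfy $J_i\asymp\Gl(h)$; your discussion of the ``obstacle'' is right but slightly more elaborate than necessary. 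Second, your identification $\inf_{\CB_h}J_i=1/\sup_{\CB_h}(1/J_i)$ and the writing $\inf_{\CB_h}J_1=\Gl(h)$ tacitly use that $J_i>0$ on $\CB_h$ and that $\Gl(h)$ is only asymptotically, not literally, equal to $\inf_{\CB_h}J_1$; both are harmless in this setting (the $J_i$ are positive Rayleigh quotients on $\CB_h\subset\CA_h$, and all ratios of buckling loads tend to $1$), but are worth stating explicitly.
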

As an application we will simplify the denominator
in the functional $\mathfrak{R}(h,\BGf)$, given by (\ref{Rq}).   
Theorem~\ref{th:KI} suggests that $\|\Gf_{r,z}\|^{2}$ can be
much larger than $\|\Gf_{z,z}\|^{2}$ and $\|\Gf_{\Gth,z}\|^{2}$. Hence, we
will prove that we can to replace $\mathfrak{C}_{h}(\BGf)$, given by
(\ref{ex.compres.measure}), with $-E\|\Gf_{r,z}\|^{2}$. Hence, we define a
simplified functional
\[
\mathfrak{R}_{1}(h,\BGf)=\frac{\int_{\CC_{h}}\av{\Hat{\SFL}_{0}e(\BGf),e(\BGf)}d\Bx}
{\int_{\CC_{h}}|\phi_{r,z}|^{2}d\Bx},\qquad \Hat{\SFL}_{0}=\frac{\SFL_{0}}{E}.
\]
\begin{lemma}
  \label{lem:K1}
The pair $(\CA_h, \mathfrak{R}_{1}(h,\BGf))$ characterizes buckling.
\end{lemma}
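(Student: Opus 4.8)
The plan is to apply the buckling equivalence machinery of Theorem~\ref{th:Bequivalence} to pass from the Rayleigh quotient $\mathfrak{R}(h,\BGf)$ to $\mathfrak{R}_1(h,\BGf)$ in two stages. First I would record what is already known: by Corollary~\ref{cor:lambda} together with Theorems~\ref{th:crit} and~\ref{th:KCas}, the pair $(\CA_h,\mathfrak{R}(h,\BGf))$ characterizes buckling, with buckling load $\Gl(h)=\Gl_{\rm cl}(h)$ satisfying $\Gl(h)\asymp h$. The target functional differs from $\mathfrak{R}(h,\BGf)$ in two ways: the numerator is rescaled by the constant $E$ (replacing $\SFL_0$ by $\Hat{\SFL}_0=\SFL_0/E$), and the denominator drops the terms $\|\phi_{z,z}\|^2$ and $\|\phi_{\Gth,z}\|^2$, keeping only $\|\phi_{r,z}\|^2$ (note that after using \eqref{ex.compres.measure}, $-\mathfrak{C}_h(\BGf)=E(\|\phi_{r,z}\|^2+\|\phi_{z,z}\|^2+\|\phi_{\Gth,z}\|^2)$, so up to the overall factor $E$ the quotient $\mathfrak{R}$ already has the form numerator over $(\|\phi_{r,z}\|^2+\|\phi_{z,z}\|^2+\|\phi_{\Gth,z}\|^2)$).

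The rescaling by $E$ is harmless: multiplying a functional that characterizes buckling by a fixed positive constant gives a functional that still characterizes buckling (the infimum, the buckling load, and condition \eqref{bmode} all scale by $E$, hence \eqref{clbm}-type statements are unaffected), so it suffices to show $(\CA_h,\mathfrak{R}_1(h,\BGf))$ is buckling equivalent to the pair with numerator $\int \langle\Hat{\SFL}_0 e(\BGf),e(\BGf)\rangle$ and denominator $\|\phi_{r,z}\|^2+\|\phi_{z,z}\|^2+\|\phi_{\Gth,z}\|^2$. For this I would verify hypothesis \eqref{J1J2} of Theorem~\ref{th:Bequivalence}. Writing $J_2$ for the latter quotient and $J_1=\mathfrak{R}_1$, on $\CA_h$ we have
\[
\left|\frac{1}{J_1(h,\BGf)}-\frac{1}{J_2(h,\BGf)}\right|
=\frac{\|\phi_{z,z}\|^2+\|\phi_{\Gth,z}\|^2}{\int_{\CC_h}\langle\Hat{\SFL}_0 e(\BGf),e(\BGf)\rangle\,d\Bx}.
\]
The numerator is bounded by $\|e(\BGf)\|^2+\|\phi_{\Gth,z}\|^2$; the obvious estimate $\|\phi_{z,z}\|\le\|e(\BGf)\|$ handles the first piece, and Theorem~\ref{th:KI}, inequality \eqref{thetaz}, gives $\|\phi_{\Gth,z}\|^2\le C(L)h^{-1/2}\|e(\BGf)\|^2$. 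Since $\langle\Hat{\SFL}_0 e(\BGf),e(\BGf)\rangle\ge(\Ga_{\SFL_0}/E)|e(\BGf)|^2$ by (P3)/\eqref{Lcoerc}, the whole ratio is bounded by $C(L)h^{-1/2}$ uniformly in $\BGf\in\CA_h$. Multiplying by $\Gl(h)\le Ch$ (Corollary~\ref{cor:lambda}) yields a bound of order $h\cdot h^{-1/2}=h^{1/2}\to 0$, which is exactly \eqref{J1J2}. Finally I would invoke Theorem~\ref{th:Bequivalence} to conclude that $(\CA_h,\mathfrak{R}_1(h,\BGf))$ characterizes buckling.

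The main obstacle, and the reason the argument is not completely mechanical, is making sure the reciprocal form \eqref{J1J2} (rather than the difference form \eqref{J2J1}) is the right one to use and that the estimates genuinely hold \emph{uniformly over $\CA_h$}, not just on a minimizing sequence. The choice of \eqref{J1J2} is dictated by the fact that the discarded terms sit in the denominator; expressing the comparison through $1/J_1-1/J_2$ converts the awkward difference of denominators into the clean ratio above, whereas \eqref{J2J1} would require controlling $\|\phi_{z,z}\|^2+\|\phi_{\Gth,z}\|^2$ relative to $\|\phi_{r,z}\|^2$, which is false in general (e.g. for variations with $\phi_{r,z}\equiv 0$). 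One should also double-check that $\CA_h$ is an admissible choice of $\CB_h$ in Theorem~\ref{th:Bequivalence}, i.e. that $\CA_h\subset\CA_h$ trivially and that $\mathfrak{R}_1$ is well-defined and positive on $\CA_h$ — this holds because $\BGf\in\CA_h$ forces $\mathfrak{C}_h(\BGf)<0$, which by \eqref{ex.compres.measure} forces $\|\phi_{r,z}\|^2+\|\phi_{z,z}\|^2+\|\phi_{\Gth,z}\|^2>0$, and one must note separately that it forces $\|\phi_{r,z}\|>0$; if $\phi_{r,z}\equiv 0$ on some $\BGf\in\CA_h$ one restricts attention to the subset where $\phi_{r,z}\not\equiv 0$, which still contains a buckling mode by the sharpness clause of Theorem~\ref{th:KI} (the ansatz \eqref{ansatz0} has $\phi^h_{r,z}$ of the same order as $e(\BGf^h)$), and applies Lemma~\ref{lem:pairB_hJ}.
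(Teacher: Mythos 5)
Your proof is essentially the paper's: the same identity
\[
\left|\frac{1}{\mathfrak{R}(h,\BGf)}-\frac{1}{\mathfrak{R}_1(h,\BGf)}\right|=
\frac{\|\phi_{z,z}\|^{2}+\|\phi_{\Gth,z}\|^{2}}{\int_{\CC_{h}}\av{\Hat{\SFL}_{0}e(\BGf),e(\BGf)}d\Bx},
\]
the same uniform bound $C(L)h^{-1/2}$ obtained from Theorem~\ref{th:KI} and coercivity, and the same appeal to \eqref{J1J2} via $\Gl(h)\le Ch$ from Corollary~\ref{cor:lambda}. One caveat: your auxiliary claim that multiplying a functional by a fixed positive constant preserves the property of characterizing buckling is false as stated (it rescales the infimum, so the rescaled infimum no longer satisfies \eqref{asymbl} and part (a) of Definition~\ref{def:equiv} fails); fortunately the claim is also unnecessary here, because the factor $E$ in $-\mathfrak{C}_h(\BGf)=E(\|\phi_{r,z}\|^2+\|\phi_{z,z}\|^2+\|\phi_{\Gth,z}\|^2)$ cancels exactly against $\Hat{\SFL}_0=\SFL_0/E$ in the numerator, so your $J_2$ coincides with $\mathfrak{R}$ identically and no rescaling step is needed.
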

\begin{proof}
By Theorem~\ref{th:KI} we have
\[
\left|\frac{1}{\mathfrak{R}(h,\BGf)}-\frac{1}{\mathfrak{R}_1(h,\BGf)}\right|=
\frac{\|\phi_{\Gth,z}\|^{2}+\|\phi_{z,z}\|^{2}}{\int_{\CC_{h}}\av{\Hat{\SFL}_{0}e(\BGf),e(\BGf)}d\Bx}
\leq\frac{C}{\sqrt{h}}.
\]
for every $\BGf\in V_h$. Condition (\ref{J1J2}) now follows from
(\ref{perfect.lambda.hat.upper}). Thus, by Theorem~\ref{th:Bequivalence}, the
pair $(\CA_h, \mathfrak{R}_1(h,\BGf))$ characterizes buckling.
\end{proof}

\section{Rigorous derivation of the classical  formula for the buckling load}
\setcounter{equation}{0}
\label{sec:perfect}
In this section we prove the classical asymptotic formula for the critical
strain \cite{lorenz11,timosh14}
\begin{equation}
  \label{answer}
  \Gl_{\rm crit}(h)\sim\frac{h}{\sqrt{3(1-\nu^{2})}}.
\end{equation}

\subsection{Restriction to a single Fourier mode}
\label{sec:Buck.modes.Fourier}
The goal of this section is to show that even if we shrink the space of
admissible variations to the set of single Fourier modes in $(\Gth,z)$, we still retain
the ability to characterize buckling. The first step is to define Fourier
modes by constructing an appropriate $2L$-periodic extension of $\BGf$ in $z$
variable. Since, no continuous $2L$-periodic extension $\Tld{\BGf}$ has the property that
$e(\Tld{\BGf})(r,\Gth,-z)=\pm e(\BGf)(r,\Gth,z)$, we will have to navigate around various
sign changes in components of $e(\BGf)$. We can handle this difficulty if
$\SFL_{0}$ is isotropic, which we have already assumed. It is easy to check that there
are only two possibilities that work\footnote{Meaning that each component of
  $e(\BGf)$ and its trace either changes sign or remains unchanged.}: odd
extension for $\phi_{r}$, $\phi_{\Gth}$, even for $\phi_{z}$, and even for
$\phi_{r}$, $\phi_{\Gth}$, odd for $\phi_{z}$. Since, $\phi_{r}$ is
unconstrained at the boundary $z=0,L$, only the latter possibility is
available to us. Hence, we expand $\phi_{r}$ and $\phi_{\Gth}$ is the cosine
series in $z$, while $\phi_{z}$ is represented by the sine series:
\begin{equation}
  \label{Fourier}
  \begin{cases}
\displaystyle\phi_{r}(r,\Gth,z)=\sum_{n\in\bb{Z}}\sum_{m=0}^{\infty}\Hat{\phi}_{r}(r,m,n)e^{in\Gth}
\cos\left(\frac{\pi m z}{L}\right),\\[2ex]
\displaystyle\phi_{\Gth}(r,\Gth,z)=\sum_{n\in\bb{Z}}\sum_{m=0}^{\infty}\Hat{\phi}_{\Gth}(r,m,n)e^{in\Gth}
\cos\left(\frac{\pi m z}{L}\right),\\[2ex]
\displaystyle\phi_{z}(r,\Gth,z)=\sum_{n\in\bb{Z}}\sum_{m=0}^{\infty}\Hat{\phi}_{z}(r,m,n)e^{in\Gth}
\sin\left(\frac{\pi m z}{L}\right).
\end{cases}
\end{equation}
While functions in $V_{h}$ can be represented by the expansion
(\ref{Fourier}), single Fourier modes do not belong to $V_{h}$. Yet, the
convenience of working with such simple test functions outweighs this
unfortunate circumstance, and hence, we switch (for the duration of technical
calculations) to the space
\begin{equation}
\label{Breather.average}
\tilde V_{h}=\left\{\BGf\in W^{1,2}(\CC_{h};\mathbb R^3):
\phi_{z}(r,\Gth,0)=\phi_{z}(r,\Gth,L)=\int_0^L\phi_{\Gth}(r,\Gth,z)dz=0\
\forall (r,\Gth)\in I_{h}\times\bb{T}\right\}.
\end{equation}
We will come
back at the very end to the space $V_{h}$ to get the desired result for our
original \bc s.

The space $\tilde V_{h}$ appears in our companion paper \cite{grha14} as
$V_h^3$, where the inequalities (\ref{KI}), (\ref{thetaz}) and (\ref{rz}) have
been proved for it. As a consequence, the estimates
(\ref{perfect.lambda.hat.upper}) hold for
\begin{equation}
  \label{ldtld}
\Tld{\Gl}(h)=\inf_{\BGf\in\tilde \CA_{h}}\mathfrak{R}(h,\BGf).
\end{equation}
We conclude that the pair $(\tilde\CA_{h},\mathfrak{R}(h,\BGf))$ characterizes
buckling (for the new \bc s associated with the space $\tilde V_{h}$).  In that case the proof of Lemma~\ref{lem:K1} carries with no change
for the space $\tilde V_{h}$. Hence, the pair
$(\tilde\CA_{h},\mathfrak{R}_{1}(h,\BGf))$ characterizes buckling as well.

We now define the single Fourier mode spaces $\CF(m,n)$. For any complex-valued function
$\Bf(r)=(f_{r}(r),f_{\Gth}(r),f_{\Gth}(r))$ and any $m\ge 1$, $n\ge 0$
we define
\[
\BGF_{m,n}(\Bf)=\left(f_{r}(r)\cos\left(\dfrac{\pi mz}{L}\right),
f_{\Gth}(r)\cos\left(\dfrac{\pi mz}{L}\right),f_{z}(r)\sin\left(\dfrac{\pi mz}{L}\right)
\right)e^{in\Gth},
\]
and
\begin{equation}
  \label{Xmndef}
 \CF(m,n)=\{\re(\BGF_{m,n}(\Bf)):\Bf\in C^{1}(I_{h};\bb{C}^{3})\},\quad m\ge 1,\ n\ge 0.
\end{equation}
Let $\tilde\CA_{h}$ be
given by (\ref{Adef}) with $V_{h}$ replaced by $\tilde V_{h}$. We define
\begin{equation}
  \label{hatmn}
\Tld{\Gl}_{1}(h)=\inf_{\BGf\in\tilde \CA_{h}}\mathfrak{R}_{1}(h,\BGf),\qquad
\tilde\Gl_{m,n}(h)=\inf_{\BGf\in \CF(m,n)}\mathfrak{R}_1(h,\BGf).
\end{equation}
\begin{theorem}
  \label{th:mn}~
\begin{itemize}
\item[(i)]
\begin{equation}
  \label{mninf}
\Tld{\Gl}_{1}(h)=\inf_{\myatop{m\ge 1}{n\ge 0}}\tilde\Gl_{m,n}(h).
\end{equation}
\item[(ii)] The infimum in (\ref{mninf}) is attained at $m=m(h)$ and $n=n(h)$ satisfying
\begin{equation}
  \label{mnbound}
  m(h)\le\frac{C(L)}{\sqrt{h}},\qquad \frac{n(h)^{2}}{m(h)}\le\frac{C(L)}{\sqrt{h}}
\end{equation}
for some constant $C(L)$ depending only on $L$.
\item[(iii)] Let $(m(h),n(h))$ be a minimizer in (\ref{mninf}).
Then the pair $(\CF(m(h),n(h)),\mathfrak{R}_1(h,\BGf))$
  characterizes buckling in the sense of Definition~\ref{def:equiv}.
\end{itemize}
\end{theorem}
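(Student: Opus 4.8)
The plan is to pass from the full space $\tilde V_h$ to single Fourier modes by exploiting the fact that the functional $\mathfrak{R}_1(h,\BGf)$ decouples over the Fourier indices $(m,n)$. First I would observe that for $\BGf\in\tilde V_h$ written in the expansion (\ref{Fourier}), both the numerator $\int_{\CC_h}\av{\Hat{\SFL}_0 e(\BGf),e(\BGf)}d\Bx$ and the denominator $\int_{\CC_h}|\phi_{r,z}|^2 d\Bx$ are sums over $(m,n)$ of the corresponding quantities for the single-mode pieces $\BGf_{m,n}:=\re(\BGF_{m,n}(\Hat{\BGf}(\cdot,m,n)))$; this uses orthogonality of $\{e^{in\Gth}\}$ on $\bb{T}$ and of $\{\cos(\pi mz/L)\}$, $\{\sin(\pi mz/L)\}$ on $[0,L]$, together with the key point that isotropy of $\SFL_0$ prevents cross terms between the cosine-type components ($e_{rr},e_{\Gth\Gth},e_{zz},e_{r\Gth}$) and the sine-type components ($e_{rz},e_{\Gth z}$) from surviving the $z$-integration. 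The elementary inequality $\sum a_k/\sum b_k \ge \inf_k a_k/b_k$ (for $a_k\ge 0$, $b_k>0$) then gives $\mathfrak{R}_1(h,\BGf)\ge\inf_{m,n}\mathfrak{R}_1(h,\BGf_{m,n})\ge\inf_{m\ge1,n\ge0}\tilde\Gl_{m,n}(h)$ whenever $\BGf\in\tilde\CA_h$ (one must check the modes with negative denominator contributions are the relevant ones and that $\BGf_{m,n}\in\CF(m,n)$; note modes with $m=0$ contribute nothing to the denominator and are excluded from $\tilde\CA_h$-relevant competition, while the $n<0$ modes pair with $n>0$ by taking real parts). Conversely $\CF(m,n)\subset\tilde\CA_h$ for any admissible mode with negative denominator, so $\Tld\Gl_1(h)\le\tilde\Gl_{m,n}(h)$ for each such $(m,n)$; combining the two inequalities yields (\ref{mninf}). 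This proves part (i).

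For part (ii), I would use the already-established two-sided bound $ch\le\Tld\Gl_1(h)\le Ch$ (Corollary~\ref{cor:lambda} applied to $\tilde V_h$, as noted in the paragraph following (\ref{ldtld})) to force a minimizing $(m,n)$ to lie in a bounded region. Suppose $(m,n)$ were a minimizer (or near-minimizer) of $\tilde\Gl_{m,n}(h)$ violating (\ref{mnbound}). Plugging the corresponding optimal $\Bf$ into the asymptotically sharp Korn inequalities (\ref{thetaz}), (\ref{rz}) specialized to a single Fourier mode --- where the $\Gth$- and $z$-derivatives become multiplication by $in$ and $\pi m/L$ --- one gets that $\|\phi_{r,z}\|^2$ and the strain energy $\|e(\BGf)\|^2$ are linked to the scales $m$ and $n^2/m$. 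Concretely, $\|\phi_{r,z}\|^2 = (\pi m/L)^2\|\phi_r\|_{m,n}^2$ and the bending/membrane structure of $e(\BGf)$ forces $\|e(\BGf)\|^2$ to grow at least like $(n^2/m)$-type and $m$-type quantities relative to $\|\phi_r\|^2$, so that $\mathfrak{R}_1 = \|e(\BGf)\|^2/\|\phi_{r,z}\|^2$ is bounded below by a positive multiple of $\max\{m,n^2/m\}\cdot\sqrt h$ times a constant once $m$ or $n^2/m$ exceeds $C(L)/\sqrt h$ --- contradicting $\tilde\Gl_{m,n}(h)\le\Tld\Gl_1(h)\le Ch$. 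Hence any minimizer satisfies (\ref{mnbound}). That the infimum is actually attained follows because, for fixed $(m,n)$, minimizing $\mathfrak{R}_1$ over $\Bf\in C^1(I_h;\bb{C}^3)$ is (after the usual density argument) a generalized eigenvalue problem for a self-adjoint elliptic operator in the single radial variable $r$ on the compact interval $I_h$, which has a genuine minimizer; and since only finitely many $(m,n)$ pairs satisfy (\ref{mnbound}), the outer infimum over $(m,n)$ is a minimum.

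For part (iii), I would invoke Lemma~\ref{lem:pairB_hJ}: we know $(\tilde\CA_h,\mathfrak{R}_1(h,\BGf))$ characterizes buckling, and $\CF(m(h),n(h))\subset\tilde\CA_h$ once we check the sign condition (which holds because at the minimizer the denominator of $\mathfrak{R}_1$ is positive, i.e. $\mathfrak{C}_h<0$, as $\Tld\Gl_1(h)>0$). So it only remains to show $\CF(m(h),n(h))$ contains a buckling mode in the sense of Definition~\ref{def:asymbm}. By part (i) and the definition of $\tilde\Gl_{m,n}$, any minimizer $\Bf_h$ of $\mathfrak{R}_1$ over $\CF(m(h),n(h))$ achieves $\mathfrak{R}_1(h,\BGf_h)=\tilde\Gl_{m(h),n(h)}(h)=\Tld\Gl_1(h)$, so $\mathfrak{R}_1(h,\BGf_h)/\Tld\Gl_1(h)=1$; since $(\tilde\CA_h,\mathfrak{R}_1)$ characterizes buckling, condition (c) of Definition~\ref{def:equiv} promotes this $\BGf_h$ to a genuine buckling mode. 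Then Lemma~\ref{lem:pairB_hJ} gives that $(\CF(m(h),n(h)),\mathfrak{R}_1(h,\BGf))$ characterizes buckling.

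The main obstacle I anticipate is part (ii): making rigorous the claim that violating the scaling bounds (\ref{mnbound}) forces $\mathfrak{R}_1$ to be too large. The sharp Korn inequalities (\ref{thetaz})--(\ref{rz}) are stated as inequalities valid for all $\BGf$, and the optimizing ansatz (\ref{ansatz0}) tells us which scales saturate them, but extracting from this a clean lower bound on $\mathfrak{R}_1$ for a single high-$(m,n)$ mode requires a careful mode-by-mode analysis of how $\|e(\BGf)\|^2$ decomposes --- in particular isolating the membrane strain $e_{\Gth\Gth}$ (which carries the $n^2$ and $\phi_r$-coupling through the $\phi_r/r$ term in cylindrical coordinates) and the bending contribution that scales with $m^2$. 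One must argue that neither of these can be made small simultaneously with keeping $\|\phi_{r,z}\|^2$ from collapsing, which is exactly the trade-off encoded in Koiter's circle $m^2+ (n^2/m)\cdot(\text{const})\sim 1/\sqrt h$. I would handle this by deriving explicit lower bounds for the single-mode Rayleigh quotient using the coercivity (\ref{Lcoerc}) to control $\|e(\BGf)\|^2$ from below by individual strain components, then optimizing the resulting one-dimensional (in $r$) quadratic forms --- but the bookkeeping is delicate and is where the real work lies.
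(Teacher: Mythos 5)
Your parts (i) and (iii) follow essentially the paper's own argument: the Plancherel/isotropy decoupling plus the elementary inequality $\sum_k a_k/\sum_k b_k\ge\inf_k (a_k/b_k)$ for (i), and Lemma~\ref{lem:pairB_hJ} together with a (near-)minimizing single mode for (iii). These are fine; the paper uses a near-minimizer with $\mathfrak{R}_1(h,\BGy_h)\le\Tld{\Gl}_1(h)+\Tld{\Gl}_1(h)^2$ rather than exact attainment of the inner infimum over $\Bf$, but that difference is immaterial.

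The genuine gap is in part (ii), precisely where you flag that ``the real work lies.'' The inequalities (\ref{thetaz}) and (\ref{rz}) cannot deliver the wave-number bounds: they control $\|\phi_{\Gth,z}\|^2$ and $\|\phi_{r,z}\|^2$ by $\|e(\BGf)\|^2$ \emph{uniformly}, with no reference to $m$ or $n$, so by themselves they only reproduce the $(m,n)$-independent lower bound $\mathfrak{R}_1\ge ch$ already contained in Corollary~\ref{cor:lambda}. Your asserted estimate $\mathfrak{R}_1\gtrsim\sqrt{h}\max\{m,n^2/m\}$ is the statement to be proved, not a consequence of anything written in the proposal. The missing ingredient is the weighted Korn inequality of Theorem~\ref{th:KtI},
\[
\|\Grad\BGf\|^{2}\le C(L)\left(\frac{\|\Gf_{r}\|}{h}+\|e(\BGf)\|\right)\|e(\BGf)\|,
\]
whose crucial feature is the factor $\|\Gf_r\|/h$. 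For a near-optimal $\BGf^h\in\CF(m,n)$ one has $\|e(\BGf^h)\|^2\le C(L)m^2h\|\phi^h_r\|^2$, and combining this with the weighted inequality and with $\|\Grad\BGf^h\|^2\ge\|\phi^h_{r,z}\|^2=(\pi m/L)^2\|\phi^h_r\|^2$ yields $m^2\le C(L)(m^2h+m/\sqrt{h})$, hence $m\sqrt{h}\le C(L)$; the bound on $n^2/m$ follows similarly after estimating $n^2\|\phi^h_r\|^2=\|\phi^h_{r,\Gth}\|^2$ by $\|\Grad\BGf^h\|^2$ (using the Poincar\'e inequality in $z$ to absorb $\|\phi^h_\Gth\|$). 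Your fallback plan --- explicit lower bounds on the one-dimensional quadratic forms in $r$ for each high mode --- is not in principle wrong, but it amounts to carrying out the full single-mode minimization \emph{without} the simplifications (linearization in $r$, discarding of lower-order terms) that the paper justifies only \emph{after} (\ref{mnbound}) is available; none of that computation is actually performed in the proposal, so part (ii) remains unproven as written.
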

\begin{proof}
Part (i). Let
\[
\Ga(h)=\inf_{\myatop{m\ge 1}{n\ge 0}}\tilde\Gl_{m,n}(h).
\]
It is clear that
$\tilde\Gl_{m,n}(h)\ge\Tld{\Gl}_{1}(h)$ for any $m\ge 1$ and $n\ge 0$, since
$\CF(m,n)\subset\tilde \CA_{h}$. Therefore, $\Ga(h)\ge\Tld{\Gl}_{1}(h)$. 

Let us prove the reverse inequality. By definition of $\Ga(h)$ we have
\begin{equation}
  \label{alphamn}
  \int_{\CC_{h}}\av{\Hat{\SFL}_{0}e(\BGf),e(\BGf)}d\Bx\ge\Ga(h)\|\phi_{r,z}\|^{2}
\end{equation}
for any $\BGf\in\CF(m,n)$, and any $m\ge 1$ and $n\ge 0$.
Any $\BGf\in\tilde\CA_{h}$ can be expanded in the Fourier series in $\Gth$ and $z$
\[
\BGf(r,\Gth,z)=\sum_{m=0}^{\infty}\sum_{n=0}^{\infty}\BGf^{(m,n)}(r,\Gth,z),
\]
where $\BGf^{(m,n)}(r,\Gth,z)\in \CF(m,n)$ for all $m\ge 1$, $n\ge 0$. If
$\SFL_{0}$ is isotropic, then the sine and cosine series in $z$ do not couple
and the Plancherel identity implies that the quadratic form
$\av{\Hat{\SFL}_{0}e(\BGf),e(\BGf)}$ diagonalizes in Fourier
space:
\begin{equation}
  \label{Sexp}
\int_{\CC_{h}}\av{\Hat{\SFL}_{0}e(\BGf),e(\BGf)}d\Bx=\sum_{m=0}^{\infty}\sum_{n=0}^{\infty}
\int_{\CC_{h}}\av{\Hat{\SFL}_{0}e(\BGf_{m,n}),e(\BGf_{m,n})}d\Bx.  
\end{equation}
We also have
\[
\|\phi_{r,z}\|^{2}=\sum_{m=0}^{\infty}\sum_{n=0}^{\infty}\|\phi_{r,z}^{(m,n)}\|^{2}=
\sum_{m=1}^{\infty}\sum_{n=0}^{\infty}\|\phi_{r,z}^{(m,n)}\|^{2}.
\]
Inequality (\ref{alphamn}) implies that
\begin{equation}
  \label{mode.by.mode}
\int_{\CC_{h}}\av{\Hat{\SFL}_{0}e(\BGf_{m,n}),e(\BGf_{m,n})}d\Bx\ge\Ga(h)\|\phi_{r,z}^{(m,n)}\|^{2},
\qquad m\ge 1,\ n\ge 0.  
\end{equation}
Summing up, we obtain that
\[
\int_{\CC_{h}}\av{\Hat{\SFL}_{0}e(\BGf),e(\BGf)}d\Bx\ge
\sum_{m=1}^{\infty}\sum_{n=0}^{\infty}
\int_{\CC_{h}}\av{\Hat{\SFL}_{0}e(\BGf_{m,n}),e(\BGf_{m,n})}d\Bx\ge
\Ga(h)\|\phi_{r,z}\|^{2}
\]
for every $\BGf\in\tilde \CA_{h}$. It follows that $\Tld{\Gl}_{1}(h)\ge\Ga(h)$, and
Part (i) is proved.

To establish Part(ii) we require a new delicate Korn-type inequality, proved in
\cite{grha14}. It is a weighted Korn inequality in Nazarov's terminology \cite{naza08}.
\begin{theorem}
\label{th:KtI}
There exists a constant $C(L)$ depending only on $L$ such that
\begin{equation}
\label{KI:first.and.half}
\|(\Grad\BGf)\|^{2}\leq C(L)\left(\frac{\|\Gf_r\|}{h}+\|e(\BGf)\|\right)\|e(\BGf)\|.
\end{equation}
for any $\BGf\in\tilde V_h$.
 \end{theorem}
Observe that, according to the estimate
\[
c(L)h\le\Tld{\Gl}_{1}(h)\le C(L)h.
\]
and Part (i) we have
\[
\inf_{\myatop{m\ge 1}{n\ge 0}}\tilde\Gl_{m,n}(h)=\inf_{(m,n)\in S_{h}}\tilde\Gl_{m,n}(h),
\]
where
\[
S_{h}=\{(m,n):\tilde\Gl_{m,n}(h)\le 2C(L)h\}.
\]
Let us show that the bounds (\ref{mnbound}) hold for all $(m,n)\in
S_{h}$. In particular, the sets $S_{h}$ are finite for all $h>0$, and hence,
the infimum in (\ref{mninf}) is attained. Let $h>0$ and $(m,n)\in S_{h}$ be fixed.
Then, by definition of the infimum there exists
$\BGf^{h}\in \CF(m,n)$ such that $\mathfrak{R}_1(h,\BGf^{h})\le 3C(L)h$.
Hence, there exists a possibly different constant $C(L)$ (not relabeled, but
independent of $m$, $n$ and $h$), such that
\begin{equation}
  \label{optrate}
\|e(\BGf^{h})\|^{2}\le C(L)h\|\phi^{h}_{r,z}\|^{2}=C(L)m^{2}h\|\phi^{h}_{r}\|^{2}.
\end{equation}
To prove the first estimate in (\ref{mnbound}) we apply inequality
(\ref{KI:first.and.half}) to $\BGf^{h}$ and then estimate $\|e(\BGf^{h})\|$
via (\ref{optrate}):
\[
\frac{m^{2}\pi^{2}}{L^{2}}\|\phi^{h}_{r}\|^{2}=\|\phi^{h}_{r,z}\|^{2}\le\|\Grad\BGf^{h}\|^{2}\le
C(L)\left(m^{2}h+\frac{m}{\sqrt{h}}\right)\|\phi^{h}_{r}\|^{2}.
\]
Hence
\[
h+\nth{m\sqrt{h}}\ge c(L)
\]
for some constant $c(L)>0$, independent of $h$. Therefore, we obtain a uniform
in $h\in(0,1)$ upper bound on
$m\sqrt{h}$. To estimate $n$ we write
\[
n^{2}\|\phi^{h}_{r}\|^{2}=\|\phi^{h}_{r,\Gth}\|^{2}\le
C_{0}(\|(\Grad\BGf^{h})_{r\Gth}\|^{2}+\|\phi^{h}_{\Gth}\|^{2}).
\]
By the Poincar\'e inequality
\[
\|\phi^{h}_{\Gth}\|^{2}\le\frac{L^{2}}{\pi^{2}}\|\phi^{h}_{\Gth,z}\|^{2}\le
\frac{L^{2}}{\pi^{2}}\|(\Grad\BGf^{h})_{\Gth z}\|^{2},
\]
and hence
$
n^{2}\|\phi^{h}_{r}\|^{2}\le C(L)\|(\Grad\BGf^{h})\|^{2}.
$
Applying (\ref{KI:first.and.half}) again and estimating $\|e(\BGf^{h})\|$ via (\ref{optrate}) we
obtain
\[
n^{2}\le C(L)\left(hm^{2}+\frac{m}{\sqrt{h}}\right),
\]
from which (\ref{mnbound})$_{2}$ follows via (\ref{mnbound})$_{1}$. Part (ii) is proved now.

Part (iii). Now, let $m(h)$, $n(h)$ be the minimizers in (\ref{mninf}).
It is sufficient to show, due to Lemma~\ref{lem:pairB_hJ},
that $\CF(m(h),n(h))$ contains a buckling mode. By definition of the infimum
in (\ref{hatmn}), for each $h\in(0,h_{0})$ there exists $\BGy_{h}\in
\CF(m(h),n(h))\subset\tilde \CA_{h}$ such that
\[
\Tld{\Gl}_{1}(h)=\Gl_{m(h),n(h)}(h)\le\mathfrak{R}_1(h,\BGy_{h})\le\Tld{\Gl}_{1}(h)
+(\Tld{\Gl}_{1}(h))^{2}.
\]
Therefore,
\[
\lim_{h\to 0}\frac{\mathfrak{R}_1(h,\BGy_{h})}{\Tld{\Gl}_{1}(h)}=1.
\]
Hence, $\BGy_{h}\in \CF(m(h),n(h))$ is a buckling mode, since the pair
$(\tilde \CA_{h},\mathfrak{R}_1(h,\BGf))$ characterizes buckling.
\end{proof}

\subsection{Linearization in $r$}
\label{sub:linr}
In this section we prove that the buckling load and a buckling mode can be
captured by single Fourier harmonics whose $\Gth$ and $z$ components are
linear in $r$. In fact, we specify an explicit structure for buckling mode
candidates. We define the linearization operator as follows:
\[
\CL(\BGf)=(\Gf_{r}(r,\Gth,z),r\Gf_{\Gth}(1,\theta,z)-(r-1)\Gf_{r,\Gth}(1,\theta,z),\Gf_{z}(1,\theta,z)-(r-1)\Gf_{r,z}(1,\theta,z)).
\]
We show now that the buckling mode can be found among the linearized
single Fourier modes
\begin{equation}
  \label{Xlin}
  \CF_{\rm lin}(m,n)=\{\CL(\BGf): \BGf\in \CF(m,n)\},\qquad m\ge 1,\ n\ge 0.
\end{equation}
  \begin{lemma}
  \label{lem:lin}
There exists $C(L)>0$ depending only on $L$, so that for every $h\in(0,1)$,
every $m\ge 1$ and $n\ge 0$, satisfying (\ref{mnbound}), and every $\BGf\in
\CF(m,n)$ we have the estimate
\begin{equation}
  \label{linu}
\mathfrak{R}_{1}(h,\CL(\BGf))\le(1+C(L)h)\mathfrak{R}_{1}(h,\BGf).
\end{equation}
\end{lemma}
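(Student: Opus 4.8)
The plan is to estimate the numerator and denominator of $\mathfrak{R}_1$ separately under the replacement $\BGf\mapsto\CL(\BGf)$. Write $\BGf=\re\BGF_{m,n}(\Bf)\in\CF(m,n)$, with radial profiles $f_r,f_\Gth,f_z\in C^1(I_h;\bb{C})$. The key point is that $\CL$ leaves $\Gf_r$ (and hence $\Gf_{r,z}$) \emph{untouched}, so the denominators are literally equal:
\begin{equation}
\label{prop:denom.equal}
\int_{\CC_h}|(\CL\BGf)_{r,z}|^2\,d\Bx=\int_{\CC_h}|\phi_{r,z}|^2\,d\Bx.
\end{equation}
Thus (\ref{linu}) reduces to the numerator estimate $\|\Hat{\SFL}_0^{1/2}e(\CL\BGf)\|^2\le(1+C(L)h)\|\Hat{\SFL}_0^{1/2}e(\BGf)\|^2$, and since $\Hat{\SFL}_0$ is a fixed positive tensor it suffices to show
\begin{equation}
\label{prop:num.goal}
\|e(\CL\BGf)\|^2\le\|e(\BGf)\|^2+C(L)h\,\|e(\BGf)\|^2,
\end{equation}
because $\|e(\BGf)\|^2$ and $\|\Hat{\SFL}_0^{1/2}e(\BGf)\|^2$ are comparable by (P3) and boundedness of $\SFL_0$. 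The first step, then, is to record (\ref{prop:denom.equal}) and reduce to (\ref{prop:num.goal}).

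The second step is to compute $e(\CL\BGf)-e(\BGf)$ explicitly in cylindrical coordinates. Since $\CL$ replaces $f_\Gth(r)$ by its first-order Taylor polynomial at $r=1$ "twisted" by an $f_r$-correction and $f_z(r)$ similarly, the difference $\CL(\BGf)-\BGf$ has components that vanish to second order in $(r-1)$; concretely each of $(\CL\BGf)_\Gth-\phi_\Gth$ and $(\CL\BGf)_z-\phi_z$ is of the form $(r-1)^2 g(\Gth,z)$ with $g$ built from second $r$-derivatives of $f_\Gth,f_r,f_z$. The cylindrical strain components involve $\Md_r$, $(1/r)\Md_\Gth$, $\Md_z$ and the curvature terms $\phi_r/r$, $\phi_\Gth/r$. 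The plan is to go component by component and show that every entry of $e(\CL\BGf)-e(\BGf)$ is, in $L^2(\CC_h)$, bounded by $C(L)\sqrt{h}\,\|e(\BGf)\|$ — this is the crucial gain, coming from two sources: the factor $(r-1)^k$ contributes $\|(r-1)^k\|_{L^\infty(I_h)}\sim h^k$, and the radial derivatives of $\Bf$ that appear get controlled by Korn-type inequalities. For the "$e_{rz}$" and "$e_{r\Gth}$" components the construction of $\CL$ is designed precisely so that the leading terms cancel: $(\CL\BGf)_{z,r}=-\Gf_{r,z}(1,\Gth,z)$ exactly kills $\Gf_{z,r}$'s would-be singular contribution against $\Gf_{r,z}$, and similarly for the $\Gth r$ entry. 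One then uses the mode structure: $\|\Md_z(\cdot)\|\sim(m\pi/L)\|\cdot\|$, $\|\Md_\Gth(\cdot)\|\sim n\|\cdot\|$, together with the bounds (\ref{mnbound}), i.e. $m\le C/\sqrt h$ and $n^2/m\le C/\sqrt h$, to convert powers of $m,n$ into powers of $h$.

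The third step is to bound the radial derivatives $f_\Gth''$, $f_z''$, $f_r'$, etc., that survive in $e(\CL\BGf)-e(\BGf)$ by $\|e(\BGf)\|$ up to the right powers of $h$. Here one invokes the weighted Korn inequality, Theorem~\ref{th:KtI}, and the component Korn inequalities of Theorem~\ref{th:KI}: for instance $\|\Gf_{r,z}\|^2\le(C/h)\|e(\BGf)\|^2$ and $\|\Gf_{\Gth,z}\|^2\le(C/\sqrt h)\|e(\BGf)\|^2$, and $\|\Grad\BGf\|^2\le C(\|\Gf_r\|/h+\|e(\BGf)\|)\|e(\BGf)\|$. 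On a single Fourier mode these inequalities translate into pointwise-in-$r$-integrated bounds on $\Bf$ and its derivatives; the second $r$-derivatives appearing in $\CL(\BGf)-\BGf$ are then estimated by differentiating the strain relations (e.g. $e_{rr}=f_r'$, $e_{r\Gth}=\frac12(f_{\Gth,r}-f_\Gth/r + \text{($\Gth$-derivative of $f_r$)}/r)$, $e_{rz}=\frac12(f_{z,r}+\text{($z$-deriv of }f_r))$) and combining with the already-established $L^2$ control of $e(\BGf)$ and its tangential derivatives, the latter again costing powers of $m,n$ absorbed via (\ref{mnbound}). Assembling all component estimates and using $a^2\le(1+\eps)b^2+(1+\eps^{-1})(a-b)^2$ with $\eps\sim\sqrt h$ yields (\ref{prop:num.goal}), hence (\ref{linu}).

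The main obstacle I expect is the third step: controlling the \emph{second} radial derivatives of the profile functions, since the natural Korn estimates bound $e(\BGf)$ and $\Grad\BGf$ but not a priori $\Grad\Grad\BGf$. The resolution is that $\CL(\BGf)-\BGf$ does not actually require $\Grad\Grad\BGf$ in $L^2$ globally — because of the $(r-1)^2$ prefactor, only the values and first $r$-derivatives of $\Gf_\Gth,\Gf_z$ at the \emph{single radius} $r=1$ enter (via Taylor's theorem with integral remainder one rewrites everything in terms of $\Md_r^2\Gf$ integrated in $r$, and then the $(r-1)^2$ weight plus Cauchy–Schwarz in $r$ over the $O(h)$-thin interval $I_h$ converts a $\|\Md_r^2\Gf\|_{L^2}$-type quantity, which is itself $\Md_r$ of a first-derivative-of-strain expression, into something manageable). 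Making this rigorous requires a careful but routine trace/averaging argument on $I_h$; the key structural fact that makes it work is exactly that the shell is thin, so $(r-1)$ is uniformly $O(h)$ on $\CC_h$, and that the operator $\CL$ was reverse-engineered to cancel the dangerous terms. Once the second-derivative terms are tamed, the rest is bookkeeping with the inequalities of Theorems~\ref{th:KI} and~\ref{th:KtI} and the mode bounds (\ref{mnbound}).
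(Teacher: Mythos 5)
Your high-level picture is partly right: $\CL$ leaves $\Gf_{r}$ untouched, so the denominator of $\mathfrak{R}_{1}$ is unchanged and everything reduces to comparing numerators, which is done component by component on the strain, trading powers of $m,n$ for powers of $h$ via (\ref{mnbound}) and the Korn inequalities. This is how the paper proceeds (in two stages, first linearizing $\Gf_{\Gth}$, then $\Gf_{z}$). But there are two genuine gaps. First, the reduction to the Frobenius-norm statement $\|e(\CL\BGf)\|^{2}\le(1+Ch)\|e(\BGf)\|^{2}$ ``by comparability'' of $\|e(\BGf)\|^{2}$ with $\int\av{\Hat{\SFL}_{0}e(\BGf),e(\BGf)}d\Bx$ is invalid: two-sided comparability of quadratic forms only transfers the estimate with an extra constant factor equal to the condition number of $\Hat{\SFL}_{0}$, which is strictly greater than $1$, whereas the entire content of the lemma is the $1+O(h)$ factor. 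One must either estimate the difference of the $\SFL_{0}$-quadratic forms directly, or, as the paper does, track $\|\Trc(e(\cdot))\|^{2}$ separately alongside $\|e(\cdot)\|^{2}$, each with additive error $C(L)h\|e(\BGf)\|^{2}$, and then use the explicit isotropic decomposition $\int\av{\Hat{\SFL}_{0}e,e}d\Bx=\frac{1}{1+\nu}\bigl(\frac{\nu}{1-2\nu}\|\Trc e\|^{2}+\|e\|^{2}\bigr)$.

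Second, and more seriously, $\CL(\BGf)-\BGf$ does \emph{not} vanish to second order in $(r-1)$: $\CL$ is not the radial Taylor polynomial. For instance $\Md_{r}\bigl((\CL\BGf)_{\Gth}-\Gf_{\Gth}\bigr)\big|_{r=1}=\Gf_{\Gth}(1)-\Gf_{r,\Gth}(1)-\Gf_{\Gth,r}(1)=-2e(\BGf)_{r\Gth}(1,\Gth,z)\neq 0$ in general. Consequently your route through second $r$-derivatives of the profiles --- which you correctly flag as the main obstacle --- is not merely delicate, it is unavailable: neither Theorem~\ref{th:KI} nor Theorem~\ref{th:KtI} controls $\Md_{r}^{2}\BGf$, and no Korn-type inequality bounds second radial derivatives of an arbitrary $C^{1}$ profile by $\|e(\BGf)\|$. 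The mechanism that actually works (and that the paper uses) is that the difference is \emph{first} order in $(r-1)$ with coefficients built only from strain components and first derivatives: with $w=\Gf_{\Gth,r}+\Gf_{r,\Gth}-\Gf_{\Gth}=2e(\BGf)_{r\Gth}-(1-r)(\Grad\BGf)_{r\Gth}$ one has $\Gf_{\Gth}-(\CL\BGf)_{\Gth}=\int_{1}^{r}w\,dt+\int_{1}^{r}(\Gf_{\Gth}(t)-\Gf_{\Gth}(1))\,dt-\int_{1}^{r}(\Gf_{r,\Gth}(t)-\Gf_{r,\Gth}(1))\,dt$, and every integrand is controlled by $\|e(\BGf)\|$ via (\ref{KI}), the Poincar\'e inequality in $r$ over the thin interval $I_{h}$, and (\ref{mnbound}); similarly for the $z$-component. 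Finally, note that your assembly step with $\eps\sim\sqrt{h}$ and component differences of size $C\sqrt{h}\|e(\BGf)\|$ would only yield a factor $(1+C\sqrt{h})$, weaker than the stated $(1+C(L)h)$ (though still sufficient for Corollary~\ref{cor:main}).
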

\begin{proof}
We will perform linearization in $r$ sequentially, first in
$\phi_{\Gth}$ and then in $\phi_{z}$. 

\textbf{Step 1} (Linearization of $\phi_{\Gth}$.)  We introduce the operator of
linearization of $\phi_{\Gth}$ component.
\[
\CL_{\Gth}(\BGf)=(\phi_{r}(r,\theta,z),r\phi_{\Gth}(1,\theta,z)-(r-1)\phi_{r,\Gth}(1,\theta,z),\phi_{z}(r,\theta,z)),
\]
For $\BGf\in\CF_{\rm lin}(m,n)$ we define
$\BGf^{(1)}=\CL_{\Gth}(\BGf)$. Then, it is easy to see that
$\BGf^{(1)}\in\CF_{\rm lin}(m,n)$. It is clear that
\[
e(\BGf^{(1)})_{rr}=e(\BGf)_{rr},\quad e(\BGf^{(1)})_{zr}=e(\BGf)_{zr},\quad
e(\BGf^{(1)})_{zz}=e(\BGf)_{zz},
\]
Thus we can estimate:
$$\|e(\BGf^{(1)})\|^2\leq \|e(\BGf)\|^2+\|e(\BGf^{(1)})_{r\Gth}\|^2+2\|e(\BGf^{(1)})_{\Gth\Gth}-e(\BGf)_{\Gth\Gth} \|^2+2\|e(\BGf^{(1)})_{\Gth z}-e(\BGf)_{\Gth z} \|^2,$$
$$\|\Trc(e(\BGf^{(1)}))-\Trc(e(\BGf))\|=\|e(\BGf^{(1)})_{\Gth\Gth}-e(\BGf)_{\Gth\Gth} \|^2.$$
We also have
\[
\|e(\BGf^{(1)})_{\Gth\Gth}-e(\BGf)_{\Gth\Gth}\|\le2\|\Gf_{\theta,\theta}^{(1)}-\Gf_{\theta,\theta}\|,\qquad
\|e(\BGf^{(1)})_{\Gth z}-e(\BGf)_{\Gth z}\|\le\|\Gf_{\theta,z}^{(1)}-\Gf_{\theta,z}\|.
\]
Therefore,
 \begin{equation}
 \label{e(A2)e(A)}
 \|e(\BGf^{(1)})\|^{2}\le \|e(\BGf)\|^{2}+\|e(\BGf^{(1)})_{r\Gth}\|^2+
2(\|\Gf_{\theta,\theta}^{(1)}-\Gf_{\theta,\theta}\|^{2}+\|\Gf_{\theta,z}^{(1)}-\Gf_{\theta,z}\|^{2}),
 \end{equation}
and
\begin{equation}
 \label{Tr(A2)Tr(A)}
 \|\Trc(e(\BGf^{(1)}))-\Trc(e(\BGf))\|\leq 2\|\Gf_{\theta,\theta}^{(1)}-\Gf_{\theta,\theta}\|^{2}.
 \end{equation}
Recalling that $\{\BGf,\BGf^{(1)}\}\subset\CF(m,n)$, and that the
inequalities (\ref{mnbound}) imply that $n^{2}\le C(L)/h$, we obtain
\begin{equation}
  \label{step2th}
\|\Gf_{\Gth,\Gth}^{(1)}-\Gf_{\Gth,\Gth}\|^{2}=n^{2}\|\Gf_{\Gth}^{(1)}-\Gf_{\Gth}\|^{2}\le
\frac{C(L)}{h}\|\Gf_{\Gth}^{(1)}-\Gf_{\Gth}\|^{2},
\end{equation}
due to (\ref{mnbound}). Similarly,
\begin{equation}
  \label{step2z}
\|\Gf_{\Gth,z}^{(1)}-\Gf_{\Gth,z}\|^{2}=\frac{\pi^{2} m^{2}}{L^{2}}\|\Gf_{\Gth}^{(1)}-\Gf_{\Gth}\|^{2}\le
\frac{C(L)}{h}\|\Gf_{\Gth}^{(1)}-\Gf_{\Gth}\|^{2},
\end{equation}
Observe that
\[
\|e(\BGf^{(1)})_{r\Gth}\|^2=\|\frac{\Gf_{r,\Gth}-\Gf_{r,\Gth}(1,\Gth,z)}{r}\|^2
=n^{2}\left\|\nth{r}\int_{1}^{r}\phi_{r,r}(t,\Gth,z)dt\right\|^{2}.
\]
Using the inequality 
\begin{equation}
  \label{CS}
\int_{I_{h}}\left(\int_{1}^{r}f(t)dt\right)^{2}dr\le\frac{h^{2}}{4}\int_{I_{h}}f(r)^{2}dr,  
\end{equation}
and the bounds on wave numbers (\ref{mnbound}) we obtain
\[
\|e(\BGf^{(1)})_{r\Gth}\|^2\leq 2n^2C(L)h^{2}\|\Gf_{r,r}\|^2\leq C(L)h\|e(\BGf)\|^{2}.
\]
We now proceed to estimate $\|\Gf_{\Gth}^{(1)}-\Gf_{\Gth}\|$.
Let
\[
w(r,\Gth,z)=\Gf_{\Gth,r}+\Gf_{r,\Gth}-\Gf_{\Gth}=2e(\BGf)_{r\Gth}-(1-r)(\Grad\BGf)_{r\Gth}.
\]
Therefore,
\[
\|w\|^{2}\le8\|e(\BGf)\|^{2}+h^{2}\|\Grad\BGf\|^{2}\le
8\|e(\BGf)\|^{2}+C(L)\sqrt{h}\|e(\BGf)\|^{2}
\]
due to Korn's inequality (\ref{KI}). Thus, $\|w\|\le C(L)\|e(\BGf)\|$.
We can express $\Gf_{\Gth}^{(1)}-\Gf_{\Gth}$ in terms of $w$ as follows
\[
\Gf_{\Gth}-\Gf_{\Gth}^{(1)}=\int_1^rw(t,\Gth,z)dt+\int_1^r(\Gf_\Gth(t,\Gth,z)-\Gf_\Gth(1,\Gth,z))dt-\int_1^r(\Gf_{r,\Gth}(t,\Gth,z)-\Gf_{r,\Gth}(1,\Gth,z))dt.
\]
Hence, by (\ref{CS}), we have
\[
\|\Gf_{\Gth}-\Gf_{\Gth}^{(1)}\|^{2}\le \frac{3h^{2}}{4}(\|w\|^{2}+\|\Gf_{\Gth}-\Gf_\Gth(1,\Gth,z)\|^{2}+\|\Gf_{r,\Gth}-\Gf_{r,\Gth}(1,\Gth,z)\|^{2}).
\]
By the Poincar\'e inequality followed by the application of Korn's
inequality (\ref{KI}) we obtain,
\[
\|\Gf_{\Gth}-\Gf_\Gth(1,\Gth,z)\|^{2}\le h^{2}\|\Gf_{\Gth,r}\|^{2}\le C(L)\sqrt{h}\|e(\BGf)\|^{2}.
\]
Similarly, by the Poincar\'e inequality and (\ref{mnbound}) we estimate
\[
\|\Gf_{r,\Gth}-\Gf_{r,\Gth}(1,\Gth,z)\|^{2}=n^2\|\Gf_{r}-\Gf_{r}(1,\Gth,z)\|^{2}\leq C(L)n^2h^2\|\Gf_{r,r}\|^2\leq C(L)h\|e(\BGf)\|^{2}.
\]
We conclude that
\[
\|\Gf_{\Gth}-\Gf_{\Gth}^{(1)}\|^{2}\le C(L)h^{2}\|e(\BGf)\|^{2}.
\]
Hence, (\ref{e(A2)e(A)}) and (\ref{Tr(A2)Tr(A)}) become respectively,
\begin{equation}
  \label{u1u2}
\|e(\BGf^{(1)})\|^{2}\le \|e(\BGf)\|^{2}(1+C(L)h),
\end{equation}
 and
\begin{equation}
  \label{Tru1u2}
\|\Trc(e(\BGf^{(1)}))\|^{2}\le\|\Trc(e(\BGf))\|^{2}+C(L)h\|e(\BGf)\|^{2}.
\end{equation}
Hence, by  (\ref{u1u2}), (\ref{Tru1u2}) and the coercivity of $\Hat{\SFL}_{0}$,
we have
\begin{multline}
  \label{step2}
\int_{\CC_{h}}\av{\Hat{\SFL}_{0}e(\BGf^{(1)}),e(\BGf^{(1)})}d\Bx=\frac{1}{1+\nu}\left(
\frac{\nu}{1-2\nu}\|\Trc(e(\BGf^{(1)}))\|^2+\|e(\BGf^{(1)})\|^2\right)\le\\
(1+C(L)h)\int_{\CC_{h}}\av{\Hat{\SFL}_{0}e(\BGf),e(\BGf)}d\Bx.
\end{multline}

\textbf{Step 2} (Linearization of $\phi_{z}$.)  In this step we define
$
\BGf^{(2)}=\CL(\BGf)=\CL(\BGf^{(1)}),
$
and proceed exactly as in Step 1. We observe that 
\[
e(\BGf^{(2)})_{rr}=e(\BGf^{(1)})_{rr},\qquad
e(\BGf^{(2)})_{r\Gth}=e(\BGf^{(1)})_{r\Gth},\qquad
e(\BGf^{(2)})_{\Gth\Gth}=e(\BGf^{(1)})_{\Gth\Gth},
\] 
and hence,
\[
\|e(\BGf^{(2)})\|^2\leq \|e(\BGf^{(1)})\|^2+\|e(\BGf^{(2)})_{rz}\|^2+2\|e(\BGf^{(2)})_{\Gth z}-e(\BGf^{(1)})_{\Gth z}\|^2+2\|e(\BGf^{(2)})_{zz}-e(\BGf^{(1)})_{zz}\|^2,
\]
and
\begin{equation}
 \label{Tr(A2)Tr(A3)}
 \|\Trc(e(\BGf^{(1)}))-\Trc(e(\BGf^{(2)}))\|\leq 2\|\Gf_{z,z}^{(1)}-\Gf_{z,z}^{(2)}\|^{2}.
 \end{equation}
We also have
\begin{equation}
  \label{zztherr}
  \|e(\BGf^{(2)})_{\Gth z}-e(\BGf^{(1)})_{\Gth z}\|\le 2\|\Gf^{(1)}_{z,\Gth}-\Gf^{(2)}_{z,\Gth}\|,\qquad
\|e(\BGf^{(2)})_{zz}-e(\BGf^{(1)})_{zz}\|\le \|\Gf^{(1)}_{z,z}-\Gf^{(2)}_{z,z}\|.
\end{equation}
For functions $\{\BGf^{(1)},\BGf^{(2)}\}\subset\CF(m,n)$ we obtain
\[
\|\Gf^{(1)}_{z,\Gth}-\Gf^{(2)}_{z,\Gth}\|=n\|\Gf^{(1)}_{z}-\Gf^{(2)}_{z}\|\le\frac{C(L)}{h}\|\Gf^{(1)}_{z}-\Gf^{(2)}_{z}\|,
\]
and
\[
\|\Gf^{(1)}_{z,z}-\Gf^{(2)}_{z,z}\|=\frac{\pi
  m}{L}\|\Gf^{(1)}_{z}-\Gf^{(2)}_{z}\|\le\frac{C(L)}{h}\|\Gf^{(1)}_{z}-\Gf^{(2)}_{z}\|,
\]
where the bounds (\ref{mnbound}) on wave numbers have been used. Hence,
\begin{equation}
  \label{step3thz}
  \|e(\BGf^{(2)})_{\Gth z}-e(\BGf^{(1)})_{\Gth z}\|^{2}\le\frac{C(L)}{h}\|\Gf^{(1)}_{z}-\Gf^{(2)}_{z}\|^{2},\quad
\|e(\BGf^{(2)})_{zz}-e(\BGf^{(1)})_{zz}\|^{2}\le\frac{C(L)}{h}\|\Gf^{(1)}_{z}-\Gf^{(2)}_{z}\|^{2}.
\end{equation}
For $\|e(\BGf^{(2)})_{rz}\|$ we obtain
\[
\|e(\BGf^{(2)})_{rz}\|^2=\|\Gf_{r,z}-\Gf_{r,z}(1,\Gth,z)\|^2
=\frac{\pi^2m^2}{L^2}\|\Gf_{r}-\Gf_{r}(1,\Gth,z)\|^2=
\frac{\pi^2m^2}{L^2}\left\|\int_{1}^{r}\Gf_{r,r}(t,\Gth,z)dt\right\|^2.
\]
Applying inequalities (\ref{CS}) and
(\ref{mnbound}) we obtain
\[
\|e(\BGf^{(2)})_{rz}\|^2\leq C(L)m^2h^2\|\Gf_{r,r}\|^2\leq C(L)h\|e(\BGf^{(1)})\|^{2}.
\]
Finally, we estimate the norm $\|\Gf^{(1)}_{z}-\Gf^{(2)}_{z}\|.$
Integrating the identity $\Gf_{z,r}^{(1)}=2e(\BGf^{(1)})_{rz}-\Gf_{r,z}^{(1)}$ we get
\[
\Gf_z^{(1)}(r,\Gth,z)-\Gf_z^{(1)}(1,\Gth,z)=
2\int_{1}^r e(\BGf^{(1)})_{rz}(t,\Gth,z)dt-\int_{1}^r\Gf_{r,z}^{(1)}(t,\Gth,z)dt.
\]
Therefore,
\[
\Gf_{z}^{(1)}-\Gf^{(2)}_{z}=2\int_{1}^r e(\BGf^{(1)})_{rz}(t,\Gth,z)dt
-\int_{1}^r(\Gf_{r,z}^{(1)}(t,\Gth,z)-\Gf_{r,z}^{(1)}(1,\Gth,z))dt.
\]
Applying inequalities (\ref{CS}) and (\ref{mnbound}) we get
\begin{multline*}
\|\Gf_{z}^{(1)}-\Gf^{(2)}_{z}\|^2 \leq h^2(\|e(\BGf^{(1)})\|^2+\|\Gf_{r,z}^{(1)}(r,\Gth,z)-\Gf_{r,z}^{(1)}(1,\Gth,z)\|^2)=\\
h^2(\|e(\BGf^{(1)})\|^2+\frac{\pi^2m^2}{L^2}\left\|\int_{1}^{r}\Gf_{r,r}^{(1)}(t,\Gth,z)dt\right\|^2)
\leq h^2(\|e(\BGf^{(1)})\|^2+\frac{\pi^2m^2h^{2}}{L^2}\|\Gf_{r,r}^{(1)}\|^2)
\leq\\ h^2(1+\frac{\pi^2m^2h^{2}}{L^2})\|e(\BGf^{(1)})\|^2\le C(L)h^{2}\|e(\BGf^{(1)})\|^2.
\end{multline*}
Applying this estimate to
(\ref{step3thz}) and (\ref{Tr(A2)Tr(A3)}) we obtain
\[
\|e(\BGf^{(2)})_{\Gth z}-e(\BGf^{(1)})_{\Gth z}\|^{2}\le C(L)h\|e(\BGf^{(1)})\|^{2},\qquad
\|e(\BGf^{(2)})_{zz}-e(\BGf^{(1)})_{zz}\|^{2}\le C(L)h\|e(\BGf^{(1)})\|^{2},
\]
and
$$
\|\Trc(e(\BGf^{(2)}))\|^{2}\le\|\Trc(e(\BGf^{(1)}))\|^{2}+C(L)h\|e(\BGf^{(1)})\|^{2}.
$$
We conclude that
\[
\|e(\BGf^{(2)})\|^{2}\le \|e(\BGf^{(1)})\|^{2}(1+C(L)h),\qquad
\|\Trc(\BGf^{(2)})\|^{2}\le\|\Trc(e(\BGf^{(1)}))\|^{2}+C(L)h\|e(\BGf^{(1)})\|^{2},
\]
and hence, by coercivity of $\Hat{\SFL}_{0}$ we have
\begin{equation}
  \label{step3}
\int_{\CC_{h}}\av{\Hat{\SFL}_{0}e(\BGf^{(2)}),e(\BGf^{(2)})}d\Bx\leq
(1+C(L) h)\int_{\CC_{h}}\av{\Hat{\SFL}_{0}e(\BGf^{(1)}),e(\BGf^{(1)})}d\Bx.
\end{equation}
Combining (\ref{step2}) and  (\ref{step3}) we obtain
(\ref{linu}).
\end{proof}
Lemma~\ref{lem:lin} permits us to look for a buckling mode among those
single Fourier modes, whose $\Gth$ and $z$ components are linear in $r$. Let $C(L)$
be a constant, whose existence is guaranteed by Lemma~\ref{lem:lin}. Let 
\[
\CM_{h}=\{(m,n): n\ge 0,\ m\ge 1\text{ and inequalities (\ref{mnbound})  hold}\}.
\]
Let
\[
\CF_{\rm lin}^{h}=\bigcup_{(m,n)\in\CM_{h}}\CF_{\rm lin}(m,n).
\]
\begin{corollary}
\label{cor:main}
The pair $(\CF_{\rm lin}^{h},\mathfrak{R}_{1})$ characterizes
buckling.
\end{corollary}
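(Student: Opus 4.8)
The plan is to deduce Corollary~\ref{cor:main} from the results already assembled, by chaining together the buckling‑characterization facts from Theorem~\ref{th:mn} and the quantitative linearization estimate of Lemma~\ref{lem:lin}. Recall that by Theorem~\ref{th:mn}(iii), for a minimizing pair $(m(h),n(h))$ in (\ref{mninf}), the pair $(\CF(m(h),n(h)),\mathfrak{R}_1(h,\BGf))$ characterizes buckling. Since $\CF_{\rm lin}^h$ is a union over \emph{all} admissible wave numbers in $\CM_h$, it certainly contains $\CF_{\rm lin}(m(h),n(h))$; so my first move is to show that the smaller pair $(\CF_{\rm lin}(m(h),n(h)),\mathfrak{R}_1)$ characterizes buckling, and then enlarge back up to $\CF_{\rm lin}^h$ using Lemma~\ref{lem:pairB_hJ}. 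The enlargement direction is the easy one: $\CF_{\rm lin}(m(h),n(h))\subset\CF_{\rm lin}^h\subset\tilde\CA_h$, and since the larger pair $(\tilde\CA_h,\mathfrak{R}_1)$ characterizes buckling while the subset $\CF_{\rm lin}^h$ contains a buckling mode (namely whatever buckling mode we produce inside $\CF_{\rm lin}(m(h),n(h))$), Lemma~\ref{lem:pairB_hJ} applies verbatim.

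So the real content is to produce a buckling mode inside $\CF_{\rm lin}(m(h),n(h))$, i.e., to show $\inf_{\BGf\in\CF_{\rm lin}(m(h),n(h))}\mathfrak{R}_1(h,\BGf)$ is again a buckling load and that a near‑minimizer is a buckling mode. Here is how I would set it up. By Theorem~\ref{th:mn}, $\Tld\Gl_1(h)=\tilde\Gl_{m(h),n(h)}(h)=\inf_{\BGf\in\CF(m(h),n(h))}\mathfrak{R}_1(h,\BGf)$ is a buckling load, and we know $\Tld\Gl_1(h)\le C(L)h$. Pick $\BGf_h\in\CF(m(h),n(h))$ with $\mathfrak{R}_1(h,\BGf_h)\le\Tld\Gl_1(h)+\Tld\Gl_1(h)^2$. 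Apply $\CL$: by Lemma~\ref{lem:lin} (whose hypotheses (\ref{mnbound}) hold precisely because $(m(h),n(h))\in\CM_h$), $\CL(\BGf_h)\in\CF_{\rm lin}(m(h),n(h))$ satisfies
\[
\mathfrak{R}_1(h,\CL(\BGf_h))\le(1+C(L)h)\,\mathfrak{R}_1(h,\BGf_h)\le(1+C(L)h)(\Tld\Gl_1(h)+\Tld\Gl_1(h)^2),
\]
so $\mathfrak{R}_1(h,\CL(\BGf_h))/\Tld\Gl_1(h)\to 1$ as $h\to 0$. Since $(\tilde\CA_h,\mathfrak{R}_1)$ characterizes buckling and $\CL(\BGf_h)\in\tilde\CA_h$, this means $\CL(\BGf_h)$ is a buckling mode lying in $\CF_{\rm lin}(m(h),n(h))$. (One should double‑check $\CL(\BGf_h)\in\tilde\CA_h$, i.e., that $\mathfrak{C}_h(\CL(\BGf_h))<0$: this is immediate since $\mathfrak{R}_1$ is finite and positive on it, forcing $\|\Gf_{r,z}\|^2>0$, hence $\CL(\BGf_h)$ lies in the set where the denominator of $\mathfrak{R}_1$ is nonzero; membership in $\tilde\CA_h$ then follows because the Rayleigh quotient being defined and the problem's structure place it among potentially destabilizing variations — this is the one bookkeeping point worth stating carefully.) Then Lemma~\ref{lem:pairB_hJ} with $\CB_h=\CF(m(h),n(h))$ (which characterizes buckling by Theorem~\ref{th:mn}(iii)) and $\CB'_h=\CF_{\rm lin}(m(h),n(h))$ gives that $(\CF_{\rm lin}(m(h),n(h)),\mathfrak{R}_1)$ characterizes buckling.

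Finally, apply Lemma~\ref{lem:pairB_hJ} once more, now with $\CB_h=\tilde\CA_h$ (characterizes buckling, with functional $\mathfrak{R}_1$) and $\CB'_h=\CF_{\rm lin}^h$: the latter is a subset of $\tilde\CA_h$ containing the buckling mode $\CL(\BGf_h)$ just constructed, so $(\CF_{\rm lin}^h,\mathfrak{R}_1)$ characterizes buckling, which is the assertion of Corollary~\ref{cor:main}. The only genuinely delicate point is the one flagged in parentheses — verifying that linearized modes are admissible competitors, i.e.\ land in $\tilde\CA_h$ — together with checking that the constant $C(L)$ in Lemma~\ref{lem:lin} is uniform over all $(m,n)\in\CM_h$ and not just the minimizer; but both are already baked into the statements of Lemma~\ref{lem:lin} and Theorem~\ref{th:mn}(ii), so the proof is essentially a short assembly of the preceding machinery rather than new analysis.
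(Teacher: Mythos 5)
Your proof is correct and follows essentially the same route as the paper: take a buckling mode (or near‑minimizer) in $\CF(m(h),n(h))$, push it through $\CL$ using Lemma~\ref{lem:lin} and the fact that $(m(h),n(h))\in\CM_h$, verify the ratio tends to $1$ so the linearized variation is again a buckling mode, and then invoke Lemma~\ref{lem:pairB_hJ} with $\CB_h=\tilde\CA_h$ and $\CB'_h=\CF_{\rm lin}^h$. The intermediate claim that $(\CF_{\rm lin}(m(h),n(h)),\mathfrak{R}_1)$ characterizes buckling is true but unused in the final step, so it could be dropped; otherwise the argument, including your careful check that $\CL$ preserves $\phi_r$ and hence membership in $\tilde\CA_h$, matches the paper's.
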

\begin{proof}
By Lemma~\ref{lem:pairB_hJ} it is sufficient to show that $\CF_{\rm lin}^{h}$ 
contains a buckling mode. Let $(m(h),n(h))$ be minimizers in
(\ref{mninf}). Then, according to Theorem~\ref{th:mn}, $(m(h),n(h))\in\CM_{h}$
and $\CF(m(h),n(h))$ contains a buckling mode. Let $\BGy_{h}\in \CF(m(h),n(h))$ be a
buckling mode. Let us show that $\CL(\BGy_{h})\in \CF_{\rm lin}^{h}$
is also a buckling mode. Indeed, by Lemma~\ref{lem:lin}
\[
1\le\frac{\mathfrak{R}_{1}(h,\CL(\BGy_{h}))}{\Tld{\Gl}_{1}(h)}\le
(1+C(L)h)\frac{\mathfrak{R}_{1}(h,\BGy_{h})}{\Tld{\Gl}_{1}(h)}.
\]
Taking a limit as $h\to 0$ and using the fact that $\BGy_{h}$ is a buckling
mode, we obtain
\[
\lim_{h\to 0}\frac{\mathfrak{R}_{1}(h,\CL(\BGy_{h}))}{\Tld{\Gl}_{1}(h)}=1.
\]
Hence, $\CL(\BGy_{h})$ is also a buckling mode, since, by Theorem~\ref{th:mn},
the pair $(\CF(m(h), n(h)),\mathfrak{R}_{1}(h,\BGf))$ characterizes buckling.
\end{proof}

\subsection{Simplification via buckling equivalence}
\label{sub:alg}
The linearization Lemma~\ref{lem:lin} allowed us to reduce the set of buckling
modes significantly. Yet, even for functions $\BGf\in \CF_{\rm lin}(m,n)$ the
explicit representation of the functional $\mathfrak{R}_{1}(h,\BGf)$ is
extremely messy. This can be dealt with by further simplification of the
functional via buckling equivalence that permits us to eliminate lower order
terms that do not influence the asymptotic behavior of the functional.  Our
first step is to simplify the denominator in $\mathfrak{R}_{1}(h,\BGf)$ by
replacing the unknown function $f_{r}(r)$ in
$\phi_{r}=f_r(r)\cos(mz)e^{in\Gth}$ with $f_{r}(1)$. Here, in order to
simplify the formulas we use $m$ in place of $\pi m/L$. Hence, we define a new
simplified functional
\[
\mathfrak{R}_{2}(h,\BGf)=\frac{\int_{\CC_{h}}\av{\Hat{\SFL}_{0}e(\BGf),e(\BGf)}d\Bx}
{\int_{\CC_{h}}|\phi_{r,z}(1,\Gth,z)|^{2}d\Bx}.
\]
\begin{lemma}
  \label{lem:simpdenom}
The functionals $\mathfrak{R}_{1}(h,\BGf)$ and $\mathfrak{R}_{2}(h,\BGf)$
are buckling equivalent.
\end{lemma}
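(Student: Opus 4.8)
The plan is to invoke the buckling equivalence Theorem~\ref{th:Bequivalence} with $J_1=\mathfrak{R}_1$, $J_2=\mathfrak{R}_2$, $\CB_h=\CF_{\rm lin}^{h}$ (shrunk, if necessary, to functions with $\phi_{r,z}(1,\Gth,z)\not\equiv 0$ so that $\mathfrak{R}_2$ is finite; this subset still contains a buckling mode, because near-minimizers of $\mathfrak{R}_1$ are forced to be essentially $r$-independent with nonzero mean, hence have $f_r(1)\ne 0$), and a buckling load $\Gl(h)=O(h)$. Since $\mathfrak{R}_1$ and $\mathfrak{R}_2$ share the numerator $\int_{\CC_h}\av{\Hat{\SFL}_0e(\BGf),e(\BGf)}d\Bx\ge c\,\|e(\BGf)\|^2$ (by coercivity (P3), with $c>0$), the whole matter reduces to the uniform estimate
\[
\bigl|\;\|\phi_{r,z}\|^2-\|\phi_{r,z}(1,\Gth,z)\|^2\;\bigr|\le C(L)\,\|e(\BGf)\|^2\qquad(\BGf\in\CF_{\rm lin}^{h}),
\]
for then $\bigl|1/\mathfrak{R}_1-1/\mathfrak{R}_2\bigr|\le C(L)$, hence $\Gl(h)\,\bigl|1/\mathfrak{R}_1-1/\mathfrak{R}_2\bigr|\le C(L)h\to 0$, which is hypothesis (\ref{J1J2}).

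To establish the displayed estimate, for $\BGf\in\CF_{\rm lin}(m,n)$ I would write $\phi_r=\re\!\bigl(f_r(r)e^{in\Gth}\bigr)\cos(\pi mz/L)$, so that $\phi_{r,z}$ is the same function with factor $-\pi m/L$ and $\cos$ replaced by $\sin$. Carrying out the $\Gth$- and $z$-integrations collapses the left-hand side to a constant (depending only on $L$) times $m^2\bigl|\int_{I_h}(|f_r(r)|^2-|f_r(1)|^2)\,r\,dr\bigr|$; writing $|f_r(r)|^2-|f_r(1)|^2=\int_1^r 2\,\re\!\bigl(\overline{f_r(t)}\,f_r'(t)\bigr)\,dt$ and applying Cauchy--Schwarz then yields
\[
\bigl|\;\|\phi_{r,z}\|^2-\|\phi_{r,z}(1,\Gth,z)\|^2\;\bigr|\le C(L)\,m^2h\,\|f_r\|_{L^2(I_h)}\,\|f_r'\|_{L^2(I_h)}.
\]

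It remains to absorb the right-hand side into $\|e(\BGf)\|^2$. I would combine the elementary bounds $\|f_r'\|_{L^2(I_h)}\le C(L)\|e(\BGf)_{rr}\|\le C(L)\|e(\BGf)\|$ (since $e(\BGf)_{rr}=\phi_{r,r}$) and $\|f_r\|_{L^2(I_h)}\le C(L)\|\phi_r\|$ with the exact identity $\|\phi_r\|=\tfrac{L}{\pi m}\|\phi_{r,z}\|$, the sharp Korn-type inequality (\ref{rz}) — applicable because $\CF_{\rm lin}(m,n)\subset\tilde V_h$ — in the form $\|\phi_{r,z}\|\le C(L)h^{-1/2}\|e(\BGf)\|$, and the wave-number localization $m\le C(L)h^{-1/2}$ from (\ref{mnbound}). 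Chaining these, $m^2h\,\|f_r\|_{L^2(I_h)}\|f_r'\|_{L^2(I_h)}\le C(L)\,mh\,\|\phi_{r,z}\|\,\|e(\BGf)\|\le C(L)\,\|e(\BGf)\|^2$, exactly as required.

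The delicate step, and the one I expect to be the main obstacle, is the term $-|f_r(1)|^2$ in $|f_r(r)|^2-|f_r(1)|^2$. Unlike the leading $\|f_r'\|_{L^2(I_h)}^2$-type contribution, it scales like the product $\|f_r\|_{L^2(I_h)}\|f_r'\|_{L^2(I_h)}$, and the factor $\|f_r\|_{L^2(I_h)}\simeq\|\phi_r\|$ is \emph{not} controlled by $\|e(\BGf)\|$ on its own — the transverse displacement of a thin shell is generically far larger than its strain. It can be absorbed only by trading one power of $\|\phi_r\|$ against $\|\phi_{r,z}\|$ and then invoking (\ref{rz}) together with $m\le C(L)h^{-1/2}$, so the argument depends essentially on the $h^{3/2}$-scaling of the Korn constant that underpins the entire paper.
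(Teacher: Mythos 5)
Your proposal is correct and follows essentially the same route as the paper: bound the difference of the two denominators by $\|\phi_{r,z}-\phi_{r,z}(1,\Gth,z)\|$-type quantities using the thin-interval Cauchy--Schwarz inequality (\ref{CS}), control the result by $m\sqrt{h}\,\|e(\BGf)\|^{2}$ via the Korn-type inequality (\ref{rz}) and the wave-number bound (\ref{mnbound}), and conclude with coercivity, the estimate $\Gl(h)\le Ch$ from (\ref{perfect.lambda.hat.upper}), and condition (\ref{J1J2}) of Theorem~\ref{th:Bequivalence}. The only cosmetic difference is that you expand $|f_{r}(r)|^{2}-|f_{r}(1)|^{2}$ as an integral of $2\,\re(\overline{f_{r}}f_{r}')$ where the paper polarizes $\|a\|^{2}-\|b\|^{2}$ directly, and your attention to the case $\phi_{r,z}(1,\Gth,z)\equiv 0$ is a harmless refinement the paper leaves implicit.
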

\begin{proof}
We observe that
\[
|\phi_{r,z}(r,\Gth,z)-\phi_{r,z}(1,\Gth,z)|=m\left|\int_{1}^{r}\phi_{r,r}(t,\Gth,z)\right|.
\]
Hence, due to (\ref{CS})
\[
\|\phi_{r,z}(r,\Gth,z)-\phi_{r,z}(1,\Gth,z)\|\le mh\|e(\BGf)\|.
\]
Therefore,
\[
\left|\int_{\CC_{h}}|\phi_{r,z}(r,\Gth,z)|^{2}d\Bx-
\int_{\CC_{h}}|\phi_{r,z}(1,\Gth,z)|^{2}d\Bx\right|\le
mh\|e(\BGf)\|\|\phi_{r,z}\|\le m\sqrt{h}\|e(\BGf)\|^{2},
\]
due to Theorem~\ref{th:KI}. Hence,
\[
\left|\nth{\mathfrak{R}_{1}(h,\BGf)}-\nth{\mathfrak{R}_{2}(h,\BGf)}\right|\le
Cm\sqrt{h},
\]
by coercivity of $\SFL_{0}$. For $(m,n)\in\CM_{h}$ we conclude that, due to
(\ref{perfect.lambda.hat.upper}) and (\ref{mnbound}),
\[
\lim_{h\to
  0}\Gl(h)\left|\nth{\mathfrak{R}_{1}(h,\BGf)}-\nth{\mathfrak{R}_{2}(h,\BGf)}\right|=0.
\]
Theorem~\ref{th:Bequivalence} applies and hence the functionals
$\mathfrak{R}_{1}(h,\BGf)$ and $\mathfrak{R}_{2}(h,\BGf)$ are buckling
equivalent.
\end{proof}
We can also simplify the numerator of $\mathfrak{R}_{2}(h,\BGf)$ by replacing
$r$ with 1 in those places, where it does not affect the asymptotics. The
simplification now proceeds at the level of individual
components of $e(\BGf)$. We may, \WLOG, restrict our attention to
$\BGf\in\CF_{\rm lin}(m,n)$, such that
\begin{equation}
  \label{pbmr}
\phi_{r}=f_{r}(r)\cos(n\Gth)\cos(mz).  
\end{equation}
Of course, choosing $\sin(n\Gth)$ instead of $\cos(n\Gth)$ in (\ref{pbmr}) works just
as well. The choice between $\sin(n\Gth)$ and $\cos(n\Gth)$ in the remaining components becomes
uniquely determined by the requirement that every entry in $e(\BGf)$ must be made up of
terms that have the same kind of trigonometric function in $n\Gth$. (We have
already taken care of the same requirement in $z$.) Hence, the $\Gth$ and $z$
components of $\BGf\in\CF_{\rm lin}(m,n)$ must have the form
\begin{equation}
  \label{pbmthz}
  \begin{cases}
  \phi_{\Gth}=(ra_{\Gth}+(r-1)nf_{r}(1))\sin(n\Gth)\cos(mz),\\
  \phi_{z}=(a_{z}+(r-1)mf_{r}(1))\cos(n\Gth)\sin(mz),
\end{cases}
\end{equation}
where $a_{\Gth}$ and $a_{z}$ are real scalars that determine the amplitude of the
Fourier modes. We compute,
\[
\begin{cases}
e(\BGf)_{rr}=f'_{r}(r)\cos(n\Gth)\cos(mz),\\[2ex]
e(\BGf)_{r\Gth}=\dfrac{n(f_{r}(1)-f_{r}(r))}{2r}\sin(n\Gth)\cos(mz),\\[2ex]
e(\BGf)_{rz}=\dfrac{m(f_{r}(1)-f_{r}(r))}{2}\cos(n\Gth)\sin(mz),\\[2ex]  
e(\BGf)_{\Gth\Gth}=\dfrac{n(ra_{\Gth}+(r-1)nf_{r}(1))+f_{r}(r)}{r}\cos(n\Gth)\cos(mz),\\[2ex]
e(\BGf)_{\Gth z}=-\dfrac{mr^{2}a_{\Gth}+na_{z}+(r^{2}-1)mnf_{r}(1)}{2r}\sin(n\Gth)\sin(mz),\\[2ex]
e(\BGf)_{rz}=m(a_{z}+(r-1)mf_{r}(1))\cos(n\Gth)\cos(mz).
\end{cases}
\]
We can now replace $e(\BGf)$ with a much simpler matrix $E(\BGf)$, given by
\[
\begin{cases}
E(\BGf)_{rr}=\dfrac{f'_{r}(r)}{\sqrt{r}}\cos(n\Gth)\cos(mz),\\[2ex]
E(\BGf)_{r\Gth}=0,\\[2ex]
E(\BGf)_{rz}=0,\\[2ex]  
E(\BGf)_{\Gth\Gth}=\dfrac{n(ra_{\Gth}+(r-1)nf_{r}(1))+f_{r}(1)}{\sqrt{r}}\cos(n\Gth)\cos(mz),\\[2ex]
E(\BGf)_{\Gth z}=-\dfrac{mr^{2}a_{\Gth}+na_{z}+(r^{2}-1)mnf_{r}(1)}{2\sqrt{r}}\sin(n\Gth)\sin(mz),\\[2ex]
E(\BGf)_{rz}=\dfrac{m(a_{z}+(r-1)mf_{r}(1))}{\sqrt{r}}\cos(n\Gth)\cos(mz)
\end{cases}
\]
\begin{lemma}
  \label{lem:finsimp}
The functionals $\mathfrak{R}_{2}(h,\BGf)$ and
\begin{equation}
  \label{Rstar}
\mathfrak{R}_{3}(h,\BGf)=\frac{\int_{\CC_{h}}\av{\Hat{\SFL}_{0}E(\BGf),E(\BGf)}d\Bx}
{\int_{\CC_{h}}|\phi_{r,z}(1,\Gth,z)|^{2}d\Bx}  
\end{equation}
are buckling equivalent.
\end{lemma}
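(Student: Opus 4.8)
The plan is to invoke the buckling-equivalence criterion of Theorem~\ref{th:Bequivalence} in the form (\ref{J2J1}), with $J_1=\mathfrak{R}_2$ and $J_2=\mathfrak{R}_3$, on the class $\CB_h=\CF_{\rm lin}^h$. Since the denominators of $\mathfrak{R}_2$ and $\mathfrak{R}_3$ are identical, it suffices to control the difference of the numerators, i.e.\ to show
\[
\left|\int_{\CC_h}\av{\Hat{\SFL}_0 e(\BGf),e(\BGf)}d\Bx-\int_{\CC_h}\av{\Hat{\SFL}_0 E(\BGf),E(\BGf)}d\Bx\right|\le C(L)\sqrt{h}\,\|e(\BGf)\|^2
\]
for every $\BGf\in\CF_{\rm lin}(m,n)$ with $(m,n)\in\CM_h$; combined with (\ref{perfect.lambda.hat.upper}), the bound $m\le C(L)/\sqrt h$ from (\ref{mnbound}), and the fact that $\|\phi_{r,z}(1,\Gth,z)\|$ is comparable to $\|\phi_{r,z}\|/\sqrt{h}$ times a harmless constant (so that division by the denominator costs only a bounded factor after using Theorem~\ref{th:KI}), this yields $\Gl(h)^{-1}\sup_{\BGf}|J_1-J_2|\to 0$ and hence buckling equivalence. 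Because $\Hat{\SFL}_0$ is isotropic, $\av{\Hat{\SFL}_0 A,A}$ is a fixed quadratic polynomial in the six entries of a symmetric matrix $A$ (explicitly $\frac{1}{1+\nu}(\frac{\nu}{1-2\nu}(\Trc A)^2+|A|^2)$), so it is enough to show that each entry of $E(\BGf)$ differs from the corresponding entry of $e(\BGf)$, in $L^2(\CC_h)$, by at most $C(L)h^{1/4}\|e(\BGf)\|$, and likewise for the traces; the quadratic form then changes by $O(\sqrt h)\|e(\BGf)\|^2$ via Cauchy--Schwarz ($|q(A)-q(B)|\le C(\|A\|+\|B\|)\|A-B\|$ together with $\|E(\BGf)\|\le C\|e(\BGf)\|$, which itself follows once the entrywise estimate is in hand).

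The entrywise estimates are then elementary and fall into two types. First, the replacements of $r$ by $1$ and of $r$ by $\sqrt r$ in denominators: on $I_h=[1-h/2,1+h/2]$ one has $|1/r-1|\le h$, $|1/\sqrt r-1|\le h$, $|r^2-1|\le 2h$, etc., so such substitutions change any bounded-coefficient expression pointwise by $O(h)$ times its size, contributing $O(h)\|e(\BGf)\|$ to each entry—well within budget. Second, and slightly more delicate, is the \emph{dropping of the off-diagonal shear terms} $e(\BGf)_{r\Gth}$ and $e(\BGf)_{rz}$, which are set to $0$ in $E(\BGf)$: here one uses that $f_r(1)-f_r(r)=-\int_1^r f_r'(t)\,dt$, together with the one-dimensional Cauchy--Schwarz inequality (\ref{CS}), to get $\|f_r(r)-f_r(1)\|_{L^2(I_h)}\le \tfrac{h}{2}\|f_r'\|_{L^2(I_h)}$; multiplying by $m$ (respectively $n$) and using $m\le C(L)/\sqrt h$, $n^2\le C(L)/h$ from (\ref{mnbound}) bounds $\|e(\BGf)_{rz}\|$ and $\|e(\BGf)_{r\Gth}\|$ by $C(L)\sqrt h\,\|f_r'\|\le C(L)\sqrt h\,\|e(\BGf)\|$, since $f_r'$ is (up to the angular/axial factors) exactly $e(\BGf)_{rr}$. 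The same device handles the $E(\BGf)_{\Gth\Gth}$ entry, where $f_r(r)$ is replaced by $f_r(1)$ in the numerator, and confirms that the trace of $E(\BGf)$ differs from that of $e(\BGf)$ by the same order.

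The main obstacle is bookkeeping rather than conceptual: one must verify that \emph{every} term discarded or modified in passing from $e(\BGf)$ to $E(\BGf)$ is genuinely of order $\sqrt h\,\|e(\BGf)\|$ \emph{uniformly in $(m,n)\in\CM_h$}, and in particular that no term that is merely small pointwise gets amplified by $m$ or $n^2$ beyond the $1/\sqrt h$ and $1/h$ thresholds that (\ref{mnbound}) guarantees. The delicate case is the cross term $(r^2-1)mn f_r(1)$ appearing in $e(\BGf)_{\Gth z}$, which is retained in $E(\BGf)$ (so nothing to prove there) versus the coefficient rescalings around it; one checks that $|(r^2-1)-(r^2-1)|=0$ trivially and that the only change is $1/(2r)\to 1/(2\sqrt r)$, an $O(h)$ perturbation, while the potentially dangerous product $mn$ is never differenced away. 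Once the entrywise $L^2$ bounds are assembled, the conclusion is immediate from Theorem~\ref{th:Bequivalence}, exactly as in the proof of Lemma~\ref{lem:simpdenom}.
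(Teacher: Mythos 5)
Your entrywise estimates on $E(\BGf)-e(\BGf)$ are correct and coincide with the paper's: the shear entries $e(\BGf)_{r\Gth}$ and $e(\BGf)_{rz}$ that get dropped are bounded by $C(L)\sqrt{h}\,\|e(\BGf)_{rr}\|$ via $f_r(r)-f_r(1)=\int_1^r f_r'$, (\ref{CS}) and (\ref{mnbound}), while every substitution $r\mapsto 1$ or $r\mapsto\sqrt r$ costs a relative $O(h)$; this gives $\|E(\BGf)-e(\BGf)\|\le C\sqrt h\,\|e(\BGf)\|$ and hence, by coercivity, a numerator difference of at most $C\sqrt h\int_{\CC_h}\av{\Hat{\SFL}_0e(\BGf),e(\BGf)}d\Bx$. (Your per-entry target of $h^{1/4}$ would only give an $O(h^{1/4})$ bound on the quadratic form, but the entries actually come out at $O(\sqrt h)$, so this is harmless.)

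The gap is in how you feed this into Theorem~\ref{th:Bequivalence}. You invoke the difference criterion (\ref{J2J1}), but $\sup_{\BGf\in\CF_{\rm lin}^{h}}|\mathfrak{R}_2(h,\BGf)-\mathfrak{R}_3(h,\BGf)|$ is not finite: your estimate yields $|\mathfrak{R}_2-\mathfrak{R}_3|\le C\sqrt h\,\mathfrak{R}_2$, and $\mathfrak{R}_2$ has no uniform upper bound on $\CF_{\rm lin}^{h}$ (take $f_r(1)$ small relative to $a_\Gth,a_z$, so the common denominator $\|\phi_{r,z}(1,\Gth,z)\|^2$ is small while the numerator is not). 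The parenthetical you offer in support is also wrong on two counts: $\|\phi_{r,z}(1,\Gth,z)\|$ is comparable to $\|\phi_{r,z}\|$ itself (that is the content of Lemma~\ref{lem:simpdenom}), not to $\|\phi_{r,z}\|/\sqrt h$; and Theorem~\ref{th:KI} bounds $\|\phi_{r,z}\|^2$ \emph{above} by $\|e(\BGf)\|^2/h$, i.e.\ it bounds the Rayleigh quotient from \emph{below}, which is the opposite of what (\ref{J2J1}) needs. The repair is to use the reciprocal criterion (\ref{J1J2}), as the paper does: the relative numerator bound gives $|1/\mathfrak{R}_2-1/\mathfrak{R}_3|\le C\sqrt h/\mathfrak{R}_2$, and then
\[
\Gl(h)\sup_{\BGf\in\CF_{\rm lin}^{h}}\left|\nth{\mathfrak{R}_2(h,\BGf)}-\nth{\mathfrak{R}_3(h,\BGf)}\right|
\le \frac{C\Gl(h)\sqrt h}{\displaystyle\inf_{\BGf\in\CF_{\rm lin}^{h}}\mathfrak{R}_2(h,\BGf)}\le C\sqrt h\to 0,
\]
using $\Gl(h)\le Ch$ and $\inf\mathfrak{R}_2\ge c(L)h$ from (\ref{perfect.lambda.hat.upper}). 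This is a local fix rather than a missing idea, but the concluding step as you wrote it does fail.
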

\begin{proof}
  Observing that 
\[
f_{r}(r)-f_{r}(1)=\int_{1}^{r}f'_{r}(t)dt,
\]
we obtain via (\ref{CS}) that
\[
\|e(\BGf)_{r\Gth}\|^{2}\le Cn^{2}h^{2}\|f'_{r}\|^{2}\le
Cn^{2}h^{2}\|e(\BGf)_{rr}\|^{2}.
\]
Similarly,
\[
\|e(\BGf)_{rz}\|^{2}\le Cm^{2}h^{2}\|e(\BGf)_{rr}\|^{2}.
\]
Hence, for every $(m,n)\in\CM_{h}$ we have
\[
\|e(\BGf)_{r\Gth}\|^{2}+\|e(\BGf)_{rz}\|^{2}\le Ch\|e(\BGf)_{rr}\|^{2}.
\]
For the components $(rr)$, $(\Gth z)$ and $(zz)$ we have
\[
E(\BGf)_{rr}=\frac{e(\BGf)_{rr}}{\sqrt{r}},\quad
E(\BGf)_{\Gth z}=\sqrt{r}e(\BGf)_{\Gth z},\quad
E(\BGf)_{zz}=\frac{e(\BGf)_{zz}}{\sqrt{r}}.
\]
Therefore,
\[
|E(\BGf)_{rr}-e(\BGf)_{rr}|\le Ch|e(\BGf)_{rr}|,\quad
|E(\BGf)_{\Gth z}-e(\BGf)_{\Gth z}|\le Ch|e(\BGf)_{\Gth z}|,\quad
|E(\BGf)_{zz}-e(\BGf)_{zz}|\le Ch|e(\BGf)_{zz}|.
\]
Finally we compute
\[
E(\BGf)_{\Gth\Gth}-e(\BGf)_{\Gth\Gth}=(\sqrt{r}-1)e(\BGf)_{\Gth\Gth}-
\frac{f_{r}(r)-f_{r}(1)}{\sqrt{r}}\cos(n\Gth)\cos(mz),
\]
which implies
\[
\|E(\BGf)_{\Gth\Gth}-e(\BGf)_{\Gth\Gth}\|\le
Ch(\|e(\BGf)_{\Gth\Gth}\|+\|e(\BGf)_{rr}\|).
\]
We conclude that that
\[
\|E(\BGf)-e(\BGf)\|\le C\sqrt{h}\|e(\BGf)\|,
\]
and thus
\[
\left|\int_{\CC_{h}}\av{\Hat{\SFL}_{0}E(\BGf),E(\BGf)}d\Bx-
\int_{\CC_{h}}\av{\Hat{\SFL}_{0}e(\BGf),e(\BGf)}d\Bx\right|\le
C\sqrt{h}\|e(\BGf)\|^{2}\le C\sqrt{h}\int_{\CC_{h}}\av{\Hat{\SFL}_{0}e(\BGf),e(\BGf)}d\Bx,
\]
by coercivity of $\Hat{\SFL}_{0}$. It follows that
\[
|\mathfrak{R}_{3}(h,\BGf)-\mathfrak{R}_{2}(h,\BGf)|\le
C\sqrt{h}\mathfrak{R}_{2}(h,\BGf)\le C\sqrt{h}\mathfrak{R}_{3}(h,\BGf)+C\sqrt{h}
|\mathfrak{R}_{3}(h,\BGf)-\mathfrak{R}_{2}(h,\BGf)|.
\]
Thus,
\[
|\mathfrak{R}_{3}(h,\BGf)-\mathfrak{R}_{2}(h,\BGf)|\le\frac{C\sqrt{h}}{1-C\sqrt{h}}\mathfrak{R}_{3}(h,\BGf).
\]
Dividing this inequality by $\mathfrak{R}_{2}(h,\BGf)\mathfrak{R}_{3}(h,\BGf)$
we obtain
\[
\left|\nth{\mathfrak{R}_{2}(h,\BGf)}-\nth{\mathfrak{R}_{3}(h,\BGf)}\right|\le
\frac{C\sqrt{h}}{\mathfrak{R}_{2}(h,\BGf)}.
\]
Therefore,
\[
\sup_{\BGf\in\CF_{\rm lin}^{h}}\tilde\Gl(h)\left|\nth{\mathfrak{R}_{2}(h,\BGf)}-
\nth{\mathfrak{R}_{3}(h,\BGf)}\right|\le 
\frac{C\tilde\Gl(h)\sqrt{h}}{\displaystyle\inf_{\BGf\in\CF_{\rm lin}^{h}}\mathfrak{R}_{2}(h,\BGf)}.
\]
It follows that, due to (\ref{perfect.lambda.hat.upper}),
\[
\lim_{h\to 0}\sup_{\BGf\in\CF_{\rm lin}^{h}}\Gl(h)\left|\nth{\mathfrak{R}_{2}(h,\BGf)}-
\nth{\mathfrak{R}_{3}(h,\BGf)}\right|=0.
\]
The application of Theorem~\ref{th:Bequivalence} completes the proof.
\end{proof}

\subsection{The formula for the buckling load}
At this point the strategy for finding the asymptotic formula for the buckling load can
be stated as follows. We first compute
\begin{equation}
  \label{almost}
  \Gl_{3}(h;m,n)=\inf_{\BGf\in \CF_{\rm lin}(m,n)}\mathfrak{R}_{3}(h,\BGf),
\end{equation}
and then we find $m(h)$ and $n(h)$ as minimizers in
\begin{equation}
  \label{ldlin}
  \Gl_{3}(h)=\min_{\myatop{m\ge 1}{n\ge 0}}\Gl_{3}(h;m,n).
\end{equation}
The goal of the section is to prove that
\begin{equation}
  \label{goal}
\lim_{h\to 0}\frac{\Gl_{3}(h)}{\Gl^{*}(h)}=1,\qquad  \Gl^{*}(h)=\frac{h}{\sqrt{3(1-\nu^{2})}}.
\end{equation}

The functional $\mathfrak{R}_{3}(h,\BGf)$ given by (\ref{Rstar}) will now be
analyzed in its explicit form.
\begin{multline*}
  \mathfrak{R}_{3}(h,\BGf)=\nth{2(\nu+1)hm^{2}|f_{r}(1)|^{2}}\int_{I_{h}}\{
(mr^{2}a_{\Gth}+na_{z}+(r^{2}-1)mnf_{r}(1))^{2}+\\
+2(f'_{r})^{2}+2(nra_{\Gth}+(r-1)n^{2}f_{r}(1)+f_{r}(1))^{2}+2m^{2}(a_{z}+(r-1)mf_{r}(1))^{2}\\
\GL(f'_{r}(r)+nra_{\Gth}+(r-1)n^{2}f_{r}(1)+f_{r}(1)+ma_{z}+(r-1)m^{2}f_{r}(1))^{2}\}dr,
\end{multline*}
where $\GL=\frac{2\nu}{1-2\nu}$.
We minimize the numerator in $f_{r}(r)$ with prescribed value $f_{r}(1)$. This
can be done by minimizing the numerator in $f'_{r}(r)$ treating it as a scalar
variable for each fixed $r$:
$$
f'_{r}(r)=-\frac{\GL}{\GL+2}p(r),
$$
where
\[
p(r)=nra_{\Gth}+(r-1)n^{2}f_{r}(1)+f_{r}(1)+ma_{z}+(r-1)m^{2}f_{r}(1).
\]
Thus, we reduce the problem of computing $\Gl_{3}(h;m,n)$ to
finite-dimensional unconstrained minimization:
\begin{equation}
  \label{fdmin}
  \Gl_{3}(h;m,n)=\min_{a_{\Gth},a_{z},f_{r}(1)}
\frac{\int_{I_{h}}\{\frac{2\GL}{\GL+2}p(r)^{2}+q(r)\}dr}{2(\nu+1)hm^{2}|f_{r}(1)|^{2}},
\end{equation}
where
\begin{multline*}
q(r)=(mr^{2}a_{\Gth}+na_{z}+(r^{2}-1)mnf_{r}(1))^{2}+
2(nra_{\Gth}+(r-1)n^{2}f_{r}(1)+f_{r}(1))^{2}+\\
2m^{2}(a_{z}+(r-1)mf_{r}(1))^{2}.  
\end{multline*}
Since the function to be minimized in (\ref{fdmin}) is homogeneous of degree
zero in the vector variable $(a_{\Gth},a_{z},f_{r}(1))$, we can set
$f_r(1)=1$, \WLOG. Then, evaluating the integral in $r$ we obtain
\[
\Gl_{3}(h;m,n)=\min_{a_{\Gth},a_{z}}\nth{2(\nu+1)m^{2}}\left\{
Q^{(0)}_{m,n}(a_{\Gth},a_{z})+\frac{h^{2}}{12}Q^{(1)}_{m,n}(a_{\Gth},a_{z})
+\frac{h^{4}}{80}Q^{(2)}_{m,n}(a_{\Gth},a_{z})\right\},
\]
where
\[
Q^{(0)}_{m,n}=\frac{2\GL}{\GL+2}(1+na_{\Gth}+ma_{z})^{2}+2(na_{\Gth}+1)^{2}+
2m^{2}a_{z}^{2}+(ma_{\Gth}+na_{z})^{2},
\]
\[
Q^{(1)}_{m,n}=\frac{2\GL}{\GL+2}(na_{\Gth}+m^{2}+n^{2})^{2}+2n^{2}(a_{\Gth}+n)^{2}+
2m^{4}+4m^{2}(a_{\Gth}+n)^{2}+2m(a_{\Gth}+n)(ma_{\Gth}+na_{z}),
\]
\[
Q^{(2)}_{m,n}=m^{2}(a_{\Gth}+n)^{2}.
\]
Let us show that the last term in $Q^{(1)}_{m,n}$, as well as
$Q^{(2)}_{m,n}$ can be discarded.
Let
\[
\tilde Q^{(1)}_{m,n}(a_{\Gth})=\frac{2\GL}{\GL+2}(na_{\Gth}+m^{2}+n^{2})^{2}+2n^{2}(a_{\Gth}+n)^{2}+
2m^{4}+4m^{2}(a_{\Gth}+n)^{2}
\]
be the simplified version of $Q^{(1)}_{m,n}$. We observe that
\[
2m|(a_{\Gth}+n)(ma_{\Gth}+na_{z})|\le
hm^{2}(a_{\Gth}+n)^{2}+\nth{h}(ma_{\Gth}+na_{z})^{2}\le \frac{h}{4}\tilde Q^{(1)}_{m,n}+\nth{h}Q^{(0)}_{m,n}.
\]
Therefore,
\[
\frac{h^{4}}{80}m^{2}(a_{\Gth}+n)^{2}+\frac{h^{2}}{6}m|(a_{\Gth}+n)(ma_{\Gth}+na_{z})|\le
(h^{2}+h)\left(Q^{(0)}_{m,n}+\frac{h^{2}}{12}\tilde Q^{(1)}_{m,n}\right).
\]
Hence,
\[
(1-h-h^{2})\left(Q^{(0)}_{m,n}+\frac{h^{2}}{12}\tilde Q^{(1)}_{m,n}\right)\le Q^{(0)}_{m,n}+\frac{h^{2}}{12}Q^{(1)}_{m,n}\le(1+h+h^{2})\left(Q^{(0)}_{m,n}+\frac{h^{2}}{12}\tilde Q^{(1)}_{m,n}\right)
\]
If we denote 
\begin{equation}
  \label{finlmbda}
\tilde\Gl_{3}(h;m,n)=\min_{a_{\Gth},a_{z}}\nth{2(\nu+1)m^{2}}\left\{
Q^{(0)}_{m,n}(a_{\Gth},a_{z})+\frac{h^{2}}{12}\tilde Q^{(1)}_{m,n}(a_{\Gth})\right\},  
\end{equation}
then
\[
(1-h-h^{2})\tilde\Gl_{3}(h;m,n)\le\Gl_{3}(h;m,n)\le(1+h+h^{2})\tilde\Gl_{3}(h;m,n),
\]
which implies that
\begin{equation}
  \label{l32l3pr}
\lim_{h\to 0}\frac{\tilde\Gl_{3}(h)}{\Gl_{3}(h)}=1,\qquad
\tilde\Gl_{3}(h)=\min_{\myatop{m\ge 1}{n\ge 0}}\Gl_{3}(h;m,n).  
\end{equation}
Minimizing $Q^{(0)}_{m,n}(a_{\Gth},a_{z})$ in $a_{z}$ we obtain
\begin{equation}
\label{formula.a_z}
a_{z}=-\frac{m(2\nu+(\nu+1)na_{\Gth})}{2m^{2}+(1-\nu)n^{2}}.
\end{equation}
The minimization in $a_{\Gth}$ was too tedious to be done by hand. Using
computer algebra software (Maple), we have obtained
the following expression for $\tilde\Gl_{3}(h;m,n)$:
\begin{equation}
  \label{l3pr}
\tilde\Gl_{3}(h;m,n)=\frac{m^{2}(1-\nu^{2})+H(m^{2}+n^{2})^{4}+Hr_{1}(m,n)+H^{2}r_{2}(m,n)}
{(1-\nu^{2})m^{2}(m^{2}+n^{2})^{2}+Hm^{2}r_{3}(m,n)},\quad H=\frac{h^{2}}{12},
\end{equation}
where $r_{1}(m,n)$ is a polynomial in $(m,n)$ of degree 6, $r_{2}(m,n)$ is a
polynomial in $(m,n)$ of degree 8 and $r_{3}(m,n)$ is a
polynomial in $(m,n)$ of degree 4. The minimum was achieved at
\begin{equation}
\label{formula.a_za_gth}
a_{\Gth}=-\frac{n(n^{2}+(\nu+2)m^{2})+Hs_{1}(m,n)}{(m^{2}+n^{2})^{2}+Hs_{2}(m,n)},
\end{equation}
where $s_{1}(m,n)$ is a polynomial in $(m,n)$ of degree 5, and
$s_{2}(m,n)$ is a polynomial in $(m,n)$ of degree 4.
Let us show that the terms $r_{i}(m,n)$ do not affect the asymptotics of
$\tilde\Gl_{3}(h)$.
Let
\begin{equation}
  \label{l3st}
\Gl^{*}_{3}(h;m,n)=\frac{m^{4}(1-\nu^{2})+H(m^{2}+n^{2})^{4}}
{(1-\nu^{2})m^{2}(m^{2}+n^{2})^{2}},\qquad \Gl^{*}_{3}(h)=\min_{\myatop{m\ge 1}{n\ge  0}}\Gl^{*}_{3}(h;m,n).
\end{equation}
\begin{lemma}
  \label{lem:final}
  \begin{equation}
    \label{l3st.formula}
    \Gl^{*}_{3}(h)=\frac{h}{\sqrt{3(1-\nu^{2})}},
  \end{equation}
and is attained on the Koiter circle \cite{koit45}
\begin{equation}
\label{formula.m.n.b.l}
\frac{m}{m^{2}+n^{2}}=\sqrt{\frac{\Gl_{3}^{*}(h)}{2}},\qquad m\ge \frac{\pi}{L},\ n\ge 0.
\end{equation}
Moreover,
\begin{equation}
  \label{lim3}
\lim_{h\to 0}\frac{\tilde\Gl_{3}(h)}{\Gl^{*}_{3}(h)}=1,  
\end{equation}
\end{lemma}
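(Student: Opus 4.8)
The plan is to treat the two assertions separately: the closed-form evaluation of $\Gl^{*}_{3}(h)$ together with the Koiter-circle characterization, and the asymptotic equivalence (\ref{lim3}) of $\tilde\Gl_{3}(h)$ with $\Gl^{*}_{3}(h)$.

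For the formula, the key observation is that $\Gl^{*}_{3}(h;m,n)$ depends on $(m,n)$ only through the single combination $u=m^{2}/(m^{2}+n^{2})^{2}$: dividing the numerator and denominator of (\ref{l3st}) by $(1-\nu^{2})m^{2}(m^{2}+n^{2})^{2}$ gives
\[
\Gl^{*}_{3}(h;m,n)=u+\frac{H}{(1-\nu^{2})u},\qquad u=\frac{m^{2}}{(m^{2}+n^{2})^{2}},\quad H=\frac{h^{2}}{12}.
\]
The arithmetic--geometric mean inequality minimizes the right-hand side over $u>0$ at $u_{*}=\sqrt{H/(1-\nu^{2})}$, with minimal value $2\sqrt{H/(1-\nu^{2})}=h/\sqrt{3(1-\nu^{2})}$, equality holding exactly when $m/(m^{2}+n^{2})=\sqrt{u_{*}}=\sqrt{\Gl^{*}_{3}(h)/2}$, i.e.\ on the curve (\ref{formula.m.n.b.l}). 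Since $u=m^{2}/(m^{2}+n^{2})^{2}$ can be brought within relative error $O(\sqrt h)$ of $u_{*}$ (keep $m$ at its smallest admissible value and take $n$ the nearest integer to $\sqrt{m/\sqrt{u_{*}}-m^{2}}$), the infimum equals $h/\sqrt{3(1-\nu^{2})}$; whether it is literally attained depends on whether the Koiter circle meets the admissible lattice of wave numbers, a point that is harmless here because any $o(1)$ relative discrepancy is absorbed by (\ref{lim3}).

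For (\ref{lim3}) I would first record the crude a priori lower bound
\[
\tilde\Gl_{3}(h;m,n)\ \ge\ \min_{a_{\Gth},a_{z}}\frac{Q^{(0)}_{m,n}(a_{\Gth},a_{z})}{2(\nu+1)m^{2}}\ =\ \frac{m^{2}}{(m^{2}+n^{2})^{2}},
\]
which holds because $\tilde Q^{(1)}_{m,n}(a_{\Gth})\ge 0$ for every $a_{\Gth}$ (a short completion of the square, valid for $|\nu|<1$), and because the right-hand side is the value of $\Gl^{*}_{3}(h;m,n)$ at $H=0$ (all corrections in (\ref{l3pr}) carry a factor $H$, so they do not alter the $H=0$ value), which by (\ref{l3st}) is $m^{2}/(m^{2}+n^{2})^{2}$. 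Combined with $m\ge\pi/L$ this forces $m^{2}+n^{2}\ge c(L)h^{-1/2}$ on the set $\{(m,n):\tilde\Gl_{3}(h;m,n)\le C(L)h\}$, and by the upper bound below this is the only set that can contain the minimizer. On that set the three correction terms in (\ref{l3pr}) are negligible next to the leading blocks $A=(1-\nu^{2})m^{4}$, $B=H(m^{2}+n^{2})^{4}$, $D=(1-\nu^{2})m^{2}(m^{2}+n^{2})^{2}$: from $\deg r_{1}=6$, $\deg r_{2}=8$, $\deg r_{3}=4$ we get $|r_{1}|\le C(m^{2}+n^{2})^{3}$, $|r_{2}|\le C(m^{2}+n^{2})^{4}$, $|r_{3}|\le C(m^{2}+n^{2})^{2}$, hence
\[
\frac{H|r_{1}|}{B}\le\frac{C}{m^{2}+n^{2}}\le C\sqrt{h},\qquad\frac{H^{2}|r_{2}|}{B}\le CH,\qquad\frac{Hm^{2}|r_{3}|}{D}\le\frac{CH}{1-\nu^{2}},
\]
all tending to $0$ uniformly in $(m,n)$. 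Therefore $\tilde\Gl_{3}(h;m,n)=\frac{A+B}{D}(1+o(1))=\Gl^{*}_{3}(h;m,n)(1+o(1))\ge\Gl^{*}_{3}(h)(1+o(1))$ uniformly, giving $\tilde\Gl_{3}(h)\ge\Gl^{*}_{3}(h)(1-o(1))$. For the matching upper bound I would take the $(m,n)$ constructed in the second paragraph, for which $m^{2}+n^{2}\asymp h^{-1/2}$ and $\Gl^{*}_{3}(h;m,n)=\Gl^{*}_{3}(h)(1+o(1))$; the same correction estimates give $\tilde\Gl_{3}(h)\le\tilde\Gl_{3}(h;m,n)=\Gl^{*}_{3}(h)(1+o(1))$, and the two bounds together are (\ref{lim3}).

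The main obstacle is the combined bookkeeping at the foot of the argument: one must establish the a priori bound $\tilde\Gl_{3}(h;m,n)\ge m^{2}/(m^{2}+n^{2})^{2}$ cleanly from the positivity of $Q^{(0)}_{m,n}$ and $\tilde Q^{(1)}_{m,n}$, without circular use of the very estimates being proved, and then make precise the reduction to the set $\{(m,n):\tilde\Gl_{3}(h;m,n)\le C(L)h\}$, which itself leans on the upper bound. Once that scaffolding is in place the correction-term estimates above are entirely routine, and the remaining point about the discreteness of the wave numbers is, as noted, invisible to the limit (\ref{lim3}).
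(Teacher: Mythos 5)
Your proposal is correct and follows essentially the same route as the paper: the AM--GM identity $\Gl^{*}_{3}(h;m,n)=u+H/((1-\nu^{2})u)$ with $u=m^{2}/(m^{2}+n^{2})^{2}$ for the closed form and the Koiter circle, degree counting to discard the $r_{2}$ and $r_{3}$ corrections uniformly, and the observation that any near-minimizer must have $m^{2}+n^{2}\to\infty$ to kill the $Hr_{1}$ term. The only (cosmetic) difference is that you control $r_{1}$ uniformly on the sublevel set $\{\tilde\Gl_{3}(h;m,n)\le C(L)h\}$ via the a priori bound $\tilde\Gl_{3}(h;m,n)\ge m^{2}/(m^{2}+n^{2})^{2}$, whereas the paper runs the same estimate only along the two minimizing sequences and concludes by sandwiching $\tilde\Gl_{3}(h)/\Gl^{*}_{3}(h)$ between the corresponding ratios.
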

\begin{proof}
  Formulas (\ref{l3st.formula}) and (\ref{formula.m.n.b.l}) become
  obvious, if we observe that
\[ 
\Gl^{*}_{3}(h;m,n)=\frac{m^{2}}{(m^{2}+n^{2})^{2}}+\frac{H(m^{2}+n^{2})^{2}}{(1-\nu^{2})m^{2}}.
\]
It is also clear from the degrees of polynomials $r_{2}(m,n)$ and $r_{3}(m,n)$ that
\[
\sup_{\myatop{m\ge \pi/L}{n\ge  0}}\frac{H^{2}r_{2}(m,n)}{2m^{4}(1-\nu^{2})+2H(m^{2}+n^{2})^{4}}\le
\sup_{\myatop{m\ge \pi/L}{n\ge 0}}\frac{Hr_{2}(m,n)}{2(m^{2}+n^{2})^{4}}\le CH,
\]
and
\[
\sup_{\myatop{m\ge \pi/L}{n\ge 0}}\frac{Hr_{3}(m,n)}{(1-\nu)(m^{2}+n^{2})^{2}}\le CH,
\]
for some constant $C$, independent of $m$, $n$, and $h$. 

In order to show that we can also eliminate $Hr_{1}(m,n)$ from the numerator
of $\tilde\Gl_{3}(h;m,n)$ we observe that for any constant $C$
\[
\lim_{h\to 0}\min_{m^{2}+n^{2}\le C}\tilde\Gl_{3}(h;m,n)>0.
\]
Hence, if $(m(h),n(h))$ is a minimizer in (\ref{l3pr}), then
$m(h)^{2}+n(h)^{2}\to\infty$, as $h\to 0$.  If $(m^{*}(h),n^{*}(h))$ denotes a
minimizer in (\ref{l3st}), then formulas (\ref{l3st.formula}) and
(\ref{formula.m.n.b.l}) imply that $m^{*}(h)^{2}+n^{*}(h)^{2}\to\infty$, as $h\to
0$, and thus
\[
\lim_{h\to 0}\frac{Hr_{1}(m(h),n(h))}{m(h)^{2}(1-\nu^{2})+H(m(h)^{2}+n(h)^{2})^{4}}
=\lim_{h\to 0}\frac{Hr_{1}(m^{*}(h),n^{*}(h))}{m^{*}(h)^{2}(1-\nu^{2})+H(m^{*}(h)^{2}+n^{*}(h)^{2})^{4}}
=0.
\]
Therefore,
\[
\frac{\tilde\Gl_{3}(h;m(h),n(h))}{\Gl^{*}_{3}(h;m(h),n(h))}\le\frac{\tilde\Gl_{3}(h)}{\Gl^{*}_{3}(h)}\le
\frac{\tilde\Gl_{3}(h;m^{*}(h),n^{*}(h))}{\Gl^{*}_{3}(h;m^{*}(h),n^{*}(h))},
\]
and (\ref{lim3}) follows.
\end{proof}
We have now achieved our goal, since (\ref{goal}) follows from (\ref{l32l3pr}) and
Lemma~\ref{lem:final}.

\subsection{Buckling modes}
\label{sec:altbc}
In this section we return to the original \bc s and the space $V_{h}$, defined
in (\ref{Breather}). Let
\begin{equation}
\label{lambda.Vh}
\Gl_{1}(h)=\inf_{\BGf\in\CA_{h}}\mathfrak{R}_1(h,\BGf).
\end{equation}
Even though, technically speaking, $V_{h}$ is not a subspace of $\tilde
V_{h}$, it is helpful to think of it as such. Hence, our next lemma is natural
(but not entirely obvious). 
\begin{lemma}
\label{lem:lamVh>lam.hat1}
Let $\Gl_{1}(h)$ and $\tilde\Gl_{1}(h)$ be given by (\ref{lambda.Vh}) and
(\ref{hatmn}), respectively, then
\begin{equation}
\label{lambda.Vh>lambda.hat1}
\Gl_1(h)\geq \tilde\Gl_1(h).
\end{equation}
\end{lemma}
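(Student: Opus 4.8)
The plan is to establish the inequality $\Gl_1(h) \ge \tilde\Gl_1(h)$ by showing that every admissible competitor for the infimum defining $\Gl_1(h)$ gives rise to an admissible competitor for $\tilde\Gl_1(h)$ with the same (or smaller) value of $\mathfrak{R}_1$. The key observation is that the two spaces $V_h$ and $\tilde V_h$ differ only in the boundary conditions imposed on $\phi_\Gth$ at $z=0,L$: in $V_h$ one requires $\phi_\Gth(r,\Gth,0)=\phi_\Gth(r,\Gth,L)=0$, whereas in $\tilde V_h$ one only requires the weaker condition $\int_0^L \phi_\Gth(r,\Gth,z)\,dz=0$. Since a function vanishing at both endpoints has zero average over $[0,L]$, the containment $V_h \subset \tilde V_h$ holds literally (both spaces also impose $\phi_z(r,\Gth,0)=\phi_z(r,\Gth,L)=0$, which is common to both). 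Therefore $\CA_h \subset \tilde\CA_h$, where $\CA_h$ is the destabilizing cone in $V_h$ and $\tilde\CA_h$ the one in $\tilde V_h$, because the cone is cut out by the same pointwise condition $\av{\BGs_h,\Grad\BGf^T\Grad\BGf}<0$ in both cases.

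With the inclusion $\CA_h \subset \tilde\CA_h$ in hand, the result is immediate: taking an infimum of the same functional $\mathfrak{R}_1(h,\BGf)$ over a smaller set can only increase (or leave unchanged) the value, so
\[
\Gl_1(h) = \inf_{\BGf\in\CA_h}\mathfrak{R}_1(h,\BGf) \ge \inf_{\BGf\in\tilde\CA_h}\mathfrak{R}_1(h,\BGf) = \tilde\Gl_1(h).
\]
The only subtlety worth a sentence of care is that $\mathfrak{R}_1$ has exactly the same definition on both spaces—its numerator $\int_{\CC_h}\av{\Hat\SFL_0 e(\BGf),e(\BGf)}\,d\Bx$ and denominator $\int_{\CC_h}|\phi_{r,z}|^2\,d\Bx$ depend only on $\BGf$ and not on which boundary-condition class one regards $\BGf$ as living in—so restricting the domain of the infimum is the entire content of the argument.

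The main (and only) obstacle here is conceptual rather than technical: one must recognize that although the authors earlier emphasized that "technically speaking, $V_h$ is not a subspace of $\tilde V_h$"—presumably a remark about the intended roles of the spaces or about $V_h^\circ$ versus $\tilde V_h$ in the $W^{1,\infty}$ setting—the $W^{1,2}$ closures do satisfy the literal inclusion needed, because the endpoint condition $\phi_\Gth|_{z=0,L}=0$ implies the averaged condition $\int_0^L\phi_\Gth\,dz=0$ by the fundamental theorem of calculus (or simply by the trace being well-defined and zero). Once this trace observation is made, there is nothing left to prove; the inequality is just monotonicity of the infimum under domain restriction.
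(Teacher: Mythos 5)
Your proof contains a fatal error in the very first step. You claim that ``a function vanishing at both endpoints has zero average over $[0,L]$,'' and on this you base the containment $V_{h}\subset\tilde V_{h}$. This implication is simply false: the function $\phi_{\Gth}(z)=z(L-z)$ vanishes at $z=0$ and $z=L$ but has strictly positive average $\int_{0}^{L}z(L-z)\,dz=L^{3}/6>0$. (The fundamental theorem of calculus gives $\int_{0}^{L}\phi_{\Gth,z}\,dz=\phi_{\Gth}(L)-\phi_{\Gth}(0)=0$, which is an averaging condition on the \emph{derivative}, not on $\phi_{\Gth}$.) Consequently the variation $\BGf=(0,\,z(L-z),\,0)$ belongs to $V_{h}$ but not to $\tilde V_{h}$, and the inclusion $\CA_{h}\subset\tilde\CA_{h}$ you invoke does not hold. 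The authors' remark ``technically speaking, $V_{h}$ is not a subspace of $\tilde V_{h}$'' is a literal statement of this obstruction, not a side comment about $W^{1,\infty}$ versus $W^{1,2}$ as you suggest; in fact the two spaces impose mutually non-nested conditions on $\phi_{\Gth}$.

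Because there is no set inclusion, one cannot compare the two infima by monotonicity. The paper's actual argument goes through the Fourier machinery of Theorem~\ref{th:mn}: for $\BGf\in\CA_{h}\subset V_{h}$ one builds the $2L$-periodic extension that is even in $\phi_{r}$, $\phi_{\Gth}$ and odd in $\phi_{z}$ (this extension is of class $H^{1}$ precisely because $\phi_{z}$ vanishes at $z=0,L$, while even extensions need no endpoint condition). The Plancherel diagonalization (\ref{Sexp}) then applies, and the single-mode lower bound (\ref{mode.by.mode}), with constant $\tilde\Gl_{1}(h)=\inf_{m\ge 1,n\ge 0}\tilde\Gl_{m,n}(h)$, can be summed over modes; since the $m=0$ terms do not contribute to $\|\phi_{r,z}\|^{2}$, this yields $\mathfrak{R}_{1}(h,\BGf)\ge\tilde\Gl_{1}(h)$ for every $\BGf\in\CA_{h}$. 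The point is that even though $\BGf$ itself is not in $\tilde V_{h}$, each of its single Fourier modes $\BGf^{(m,n)}$ with $m\ge 1$ does lie in $\CF(m,n)\subset\tilde V_{h}$, and only these enter the comparison. You would need to replace the (false) containment argument with this mode-by-mode reasoning to obtain a correct proof.
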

\begin{proof}
In view of Theorem~\ref{th:mn} it is sufficient to prove the inequality
\[
\Gl_1(h)\geq\inf_{\myatop{m\ge 1}{n\ge 0}}\tilde\Gl_{m,n}(h).
\]
This is done by repeating the arguments in the proof of the analogous
inequality in Theorem~\ref{th:mn}. The argument is based on the fact the
$2L$-periodic extension of $\BGf\in\CA_h\subset V_{h}$, such that $\phi_{r}$
and $\phi_{\Gth}$ are even and $\phi_{z}$ is odd, is still of class $H^{1}$,
and the expansion (\ref{Sexp}) is valid. The inequality
(\ref{lambda.Vh>lambda.hat1}) follows from (\ref{mninf}) and the inequality
(\ref{mode.by.mode}), which is valid for each single Fourier mode.
\end{proof}
In order to prove that the asymptotic formula (\ref{answer}) holds
for $\Gl_{1}(h)$ (and hence for $\Gl_{\rm crit}(h)$) it is sufficient
to find a test function $\BGf^{h}\in \CA_{h}$ such that
\begin{equation}
  \label{Vhcond}
\lim_{h\to 0}\frac{ \mathfrak{R}_{1}(h,\BGf^{h})}{\Tld{\Gl}_{1}(h)}=1.  
\end{equation}
Indeed,
\[
1=\lim_{h\to 0}\frac{\mathfrak{R}_{1}(h,\BGf^{h})}{\Tld{\Gl}_{1}(h)}\ge
\lims_{h\to 0}\frac{\Gl_{1}(h)}{\Tld{\Gl}_{1}(h)}\ge\limi_{h\to 0}\frac{\Gl_{1}(h)}{\Tld{\Gl}_{1}(h)}\ge 1,
\]
which proves both that
\[
\lim_{h\to 0}\frac{\Gl_{1}(h)}{\Gl^{*}(h)}=1,
\]
and that $\BGf^{h}\in \CA_{h}$ is a buckling mode.

We construct the buckling mode $\BGf^{h}\in V_{h}$ as a 2-term Fourier
expansion (\ref{Fourier}). For this purpose we choose
$m=m(h)\to\infty$, as $h\to 0$, and $n=n(h)$ to lie on Koiter's circle and
\begin{equation}
  \label{bmFS}
  \begin{cases}
\displaystyle\phi^{h}_{r}(r,\Gth,z)=\sum_{m\in\{m(h),m(h)+2\}}f_{r}(r,m,n(h))\cos(n(h)\Gth)
\cos(\hat{m}z),\\[2ex]
\displaystyle\phi^{h}_{\Gth}(r,\Gth,z)=\sum_{m\in\{m(h),m(h)+2\}}f_{\Gth}(r,m,n(h))\sin(n(h)\Gth)
\cos(\hat{m}z),\\[2ex]
\displaystyle\phi^{h}_{z}(r,\Gth,z)=\sum_{m\in\{m(h),m(h)+2\}}f_{z}(r,m,n(h))\cos(n(h)\Gth)
\sin(\hat{m}z),
\end{cases}
\end{equation}
where now, in order to avoid confusion, we distinguish between $m\in\bb{Z}$ and 
\[
\hat{m}=\frac{\pi m}{L}.
\]
To ensure that $\BGf^{h}\in V_{h}$  we require
\[
f_{\Gth}(r,m(h)+2,n(h))=-f_{\Gth}(r,m(h),n(h)).
\]
The structure of coefficients
$\Bf(r,m,n)$ is determined by optimality at each of the two values of $m$
separately, since the expansion (\ref{Sexp}) is valid for $\BGf\in V_{h}$. In
particular, we 
choose
\[
f_{\Gth}(r,m(h),n(h))=ra_{\Gth}(h)+(r-1)n(h),\qquad
a_{\Gth}(h)=-\left.\frac{n(n^{2}+(\nu+2)\hat{m}^{2})}{(\hat{m}^{2}+n^{2})^{2}}\right|_{\myatop{m=m(h)}{n=n(h)}}.
\]
Let
\[
F_{z}(r,m,n,h)=a_{z}(m,n,h)+(r-1)\hat{m},\qquad 
a_{z}(m,n,h)=-\frac{\hat{m}(2\nu+(\nu+1)na_{\Gth}(h))}{2\hat{m}^{2}+(1-\nu)n^{2}},
\]
\[
F_{r}(r,m,n,h)=1-\frac{\nu(r-1)}{1-\nu}(na_{\Gth}(h)+1+\hat{m}a_{z}(m,n,h))-
\frac{\nu(r-1)^{2}}{2(1-\nu)}(na_{\Gth}(h)+n^{2}+\hat{m}^{2}).
\]
Then
\[
f_{r}(r,m(h),n(h))=F_{r}(r,m(h),n(h),h),\qquad
f_{r}(r,m(h)+2,n(h))=-F_{r}(r,m(h)+2,n(h),h),
\]
\[
f_{z}(r,m(h),n(h))=F_{z}(r,m(h),n(h),h),\qquad
f_{z}(r,m(h)+2,n(h))=-F_{z}(r,m(h)+2,n(h),h).
\]
\begin{figure}[t]
 \centering
\includegraphics[scale=0.5]{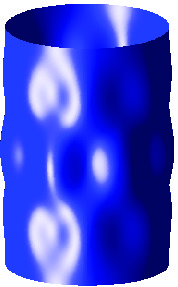}\hspace{8ex}
 \includegraphics[scale=0.5]{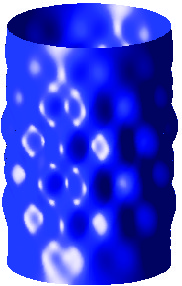}\hspace{8ex}
\includegraphics[scale=0.5]{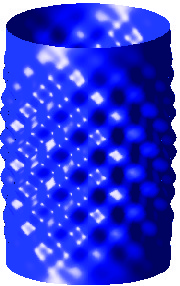}
 \caption{Koiter circle buckling modes corresponding, left to right, to
   $m(h)\sim h^{-1/8}$, $h^{-1/4}$ and $h^{-3/8}$. Poisson's ratio $\nu=1/3$.}
 \label{fig:fixedbc}
\end{figure}
Maple calculation verifies that the test function, $\BGf^{h}$ satisfies (\ref{Vhcond}).
Figure~\ref{fig:fixedbc} shows buckling modes for
\[
\hat{m}(h)=\left(\sqrt{\frac{2}{\Gl^{*}(h)}}\right)^{\Ga},\quad\Ga=1/4,\ 1,2,\ 3/4.
\]

\medskip

\noindent\textbf{Acknowledgments.}  The authors are grateful to Eric Clement
and Mark Peletier for their valuable comments and suggestions. This material
is based upon work supported by the National Science Foundation under Grant
No. 1008092.


\end{document}